\renewcommand{\d}{\mathrm{d}}
\newcommand{\T}{\mathrm{T}}
\newcommand{\avg}[1]{\langle{#1}\rangle}
\newcommand{\Ge}{\geqslant}
\newcommand{\Le}{\leqslant}
\newcommand{\Prec}{\preccurlyeq}
\newcommand{\Succ}{\succcurlyeq}
\newcommand{\Ent}{\mathrm{\,Ent}}
\newcommand{\std}{\mathrm{std}}
\newcommand{\red}[1]{#1}
\theoremstyle{theorem}
\theoremstyle{definition}
\newtheorem*{assumption}{Assumption}
\begin{document}
\title[Ergodicity of PIMD]{\red{Dimension-free Ergodicity} of Path Integral Molecular Dynamics}


\author{
Xuda Ye\affil{1}\comma\corrauth,
Zhennan Zhou\affil{2}
}

\address{
\affilnum{1}Beijing International Center for Mathematical Research, Peking University, Beijing 100871, China. \\
\affilnum{2}Institute for Theoretical Sciences, Westlake University, Hangzhou, 310030, China.
}

\emails{ \\
{\tt abneryepku@pku.edu.cn} (X.~Ye)\\
{\tt zhouzhennan@westlake.edu.cn} (Z.~Zhou)
}

\begin{abstract}
The quantum thermal average plays a central role in describing the thermodynamic properties of a quantum system. Path integral molecular dynamics (PIMD) is a prevailing approach for computing quantum thermal averages by approximating the quantum partition function as a classical isomorphism on an augmented space, enabling efficient classical sampling, but the theoretical knowledge of the \red{ergodicity of the sampling} is lacking. \red{Parallel to the standard PIMD with $N$ ring polymer beads, we also study the Matsubara mode PIMD, where the ring polymer is replaced} by a continuous loop composed of $N$ \red{Matsubara modes}. \red{Utilizing} the generalized $\Gamma$ calculus, we prove that \red{both the Matsubara mode PIMD and the standard PIMD} have uniform-in-$N$ ergodicity, i.e., \red{the convergence rate towards the invariant distribution does not depend on the number of modes or beads $N$.}
\end{abstract}

\ams{37A30, 82B31, 81S40}
\keywords{quantum thermal average, path integral molecular dynamics,
Matsubara modes, ergodicity, generalized $\Gamma$ calculus.}

\maketitle

\section{Introduction}
Calculation of the quantum thermal average plays an important role in quantum physics and quantum chemistry, not only because it fully characterizes the canonical ensemble of the quantum system, but also because of its wide applications in describing the thermal properties of complex quantum systems, including the idea quantum gases \cite{q1}, chemical reaction rates \cite{q2,q3}, density of states of crystals \cite{q4}, quantum phase transitions \cite{q5}, etc. However, since the computational cost of direct discretization methods (finite difference, pseudospectral methods, etc.) grows exponentially with the spatial dimension \cite{s1,s2}, the exact calculation of the quantum thermal average is hardly affordable in high dimensions. In the past decades, there have been numerous methods committed to compute the quantum thermal average approximately, and the path integral molecular dynamics (PIMD) is among the most prevailing ones.

The PIMD is a computational framework to obtain accurate quantum thermal averages. Using the imaginary time-slicing approach in Feynman path integral \cite{feynman},
the PIMD maps the quantum system in $\mathbb R^d$ to a ring polymer of $N$ beads in $\mathbb R^{dN}$, where each bead represents a classical duplicate of the original quantum system, and adjacent beads are connected by a harmonic spring potential. When the number of beads $N$ is large enough, the classical Boltzmann distribution of the ring polymer in $\mathbb R^{dN}$ is expected to yield an accurate approximation of the quantum thermal average.
Since the development of the PIMD in 1970s, this framework has been widely used in the calculations in the chemical reaction rates
\cite{pimd_6,pimd_9,pimd_10}, transition state theory \cite{pimd_7} and tunneling splittings \cite{ts_1,ts_2}.
\red{Several} variants of the PIMD, the ring polymer molecular dynamics (RPMD) \cite{rpmd_1,rpmd_2}, the centroid molecular dynamics (CMD) \cite{cmd_1,cmd_2} and \red{the Matsubara dynamics \cite{Matsubara_0,Matsubara_1}}, have been employed to compute the quantum correlation function.
Recently, there have been fruitful studies on designing efficient and accurate algorithms to enhance the numerical performance of the PIMD \cite{pimd_3,pimd_4,pimd_5,pimd_8}.
The PIMD can also be applied in multi-electronic-state quantum systems, see \cite{multi_1,multi_2} for instance.
The general procedure to compute the quantum thermal average in the PIMD is demonstrated as follows:
\begin{enumerate}
\item \textbf{Ring polymer approximation}. Pick a positive integer $N$ and map the quantum system to a ring polymer system of $N$ beads, where each bead is a classical duplicate of the original system.
\item \textbf{Construction of sampling}. \red{Equip the ring polymer system with a stochastic thermostat (e.g., Langevin, Andersen and Nos\'e--Hoover) so that the resulting stochastic process samples} the Boltzmann distribution of the ring polymer.
\item \textbf{Stochastic simulation}. Evolve the stochastic process with a proper time discretization method, and approximate the quantum thermal average by the time average in the long-time simulation.
\end{enumerate}
\red{The PIMD framework based on the ring polymer beads is referred to as the standard PIMD in this paper}.

Although the standard PIMD has become a mature framework to compute the quantum thermal average, \red{a rigorous} mathematical understanding \red{of its convergence} is still sorely lacking. A fundamental question is the \red{dimension-free} ergodicity, i.e., does the stochastic process sampling the Boltzmann distribution has a convergence rate which does not depend on the number of beads $N$?
\red{The property of} uniform-in-$N$ ergodicity has been partially validated when the potential function is quadratic \cite{pmmLang,Cayley_2}, but a rigorous justification in the general case is still to be investigated.

\red{An important goal of this paper is to justify the dimension-free ergodicity of the standard PIMD. Nevertheless, our proof mainly relies on an alternative PIMD framework, the Matsubara mode PIMD, which is more convenient for analysis.
As its name indicates, this framework is based on the Matsubara modes \cite{Matsubara,Matsubara_0} rather than the discrete beads of the ring polymer.} \red{In the Matsubara mode PIMD, the ring polymer is expressed as the superpositions of $N$ Matsubara modes, resulting in a continuous imaginary-time ring polymer loop rather than a ragged one (see Figure~3.2 of \cite{Matsubara_0} for example)}.
\red{The Matsubara mode PIMD has a close relation to the standard PIMD, because
the Matsubara mode PIMD with suitable discretization is equivalent to the standard PIMD \cite{Matsubara,Matsubara_compare}, except for the minor difference in the mode frequencies. In fact, the standard PIMD utilizes the normal mode frequencies \cite{Matsubara_0}, whose continuum limit give rise to the Matsubara frequencies. It is also pointed in \cite{Matsubara,Matsubara_compare} that the standard PIMD has better performance than the Matsubara mode PIMD in computing the quantum thermal average.}

The main contribution of this paper is that we prove the uniform-in-$N$ ergodicity of the \red{Matsubara mode PIMD and the standard PIMD}, namely, the convergence rate towards the invariant distribution does not depend on $N$, \red{which stands for the number of modes in the Matsubara mode PIMD and the number of beads in the standard PIMD. The authors believe that this is the first dimension-free egordicity result of the PIMD in the form of the underdamped Langevin dynamics. Our proof strategies include} the Bakry--\'Emery calculus \cite{bakry} and the \red{recently developed} generalized $\Gamma$ calculus \cite{m1,m2}.
\red{It should be stressed that the convergence rate can still depend on the potential function $V(x)$ and the temperature, and in particular the convergence rate is exponentially small in the low temperature limit. This a a common feature of the Bakry--\'Emery calculus with a non-convex potential function, referring to Proposition~5.1.6 of \cite{bakry}.}

The paper is organized as follows. In Section \ref{section: derivation} we derive the \red{Matsubara mode} PIMD from the \red{Feynman path integral representation of the quantum thermal average}. In Section \ref{section: convergence analysis} we prove the uniform-in-$N$ ergodicity the \red{Matsubara mode PIMD and the standard PIMD}. In Section \ref{section: numerical result} we implement the numerical tests to show the performance of the \red{Matsubara mode PIMD and the standard PIMD} in computing the quantum thermal average and validate the dimension-free ergodicity.
\section{Derivation of the \red{Matsubara mode} PIMD}
\label{section: derivation}
In this section we derive the \red{Matsubara mode PIMD from the Feynman path integral representation of the quantum thermal average}. First, we show that the quantum equilibrium can be equivalently interpreted as the classical Boltzmann distribution of a continuous loop representing the ring polymer in the position space. Second, we represent the energy function $\mathcal E(\xi)$ of the ring polymer loop in terms of the \red{Matsubara} coordinates $\xi$. Finally, we construct an infinite-dimensional underdamped Langevin dynamics to sample the distribution $\exp{-\mathcal E(\xi)}$, formulating the \red{Matsubara mode} PIMD.

\red{Note that the our derivation does not rely on the normal mode transform, and is thus different from the traditional ways to generate the Matsubara modes \cite{Matsubara_0}.}
\subsection{Quantum equilibrium as the Boltzmann distribution of a continuous loop}
We consider the quantum system in $\mathbb R^d$ given by the Hamiltonian
\begin{equation}
	\hat H = \frac{\hat p^2}2 + V(\hat q),
	\label{Hamiltonian}
\end{equation}
where $\hat q$ and $\hat p$ are the position and momentum operators in $\mathbb R^d$, and $V(\cdot)$ is a real-valued potential function in $\mathbb R^d$. When the quantum system is at a constant temperature $T = 1/\beta$, the state of the system can be described by the canonical ensemble with the density operator $e^{-\beta\hat H}$, and thus the partition function $\mathcal Z = \mathrm{Tr}[e^{-\beta\hat H}]$. The quantum thermal average of the system means the \red{canonical} average of an observable operator $O(\hat q)$, namely
\begin{equation}
	\avg{O(\hat q)}_\beta
	= \frac
	{\mathrm{Tr}[e^{-\beta\hat H} O(\hat q)]}
	{\mathrm{Tr}[e^{-\beta\hat H}]},
	\label{thermal average}
\end{equation}
Here, we assume the observable operator $O(\hat q)$ depends only on the position operator $\hat q$, where $O(\cdot)$ is a real-valued function in $\mathbb R^d$.

Utilizing the Feynman path integral \cite{path,feynman}, the partition function can be interpreted as the integral with respect to a continuous loop $x(\tau)$ in $\mathbb R^d$ parameterized by $\tau\in[0,\beta]$:
\begin{equation*}
\mathcal Z = \mathrm{Tr}[e^{-\beta\hat H}] = \int \exp\{-\mathcal E(x)\}\mathcal D[x],
\end{equation*}
where $\mathcal D[x]$ is the formal Lebesgue measure of the continuous loop in $\mathbb R^d$, and the energy functional $\mathcal E(x)$ is defined by
\begin{equation}
	\mathcal E(x) = \frac12\int_0^\beta |x'(\tau)|^2\d\tau + \int_0^\beta  V(x(\tau))\d\tau.
	\label{energy Ex}
\end{equation}
As a consequence, the \red{quantum canonical ensemble} $e^{-\beta\hat H}$ is exactly mapped to a classical Boltzmann distribution of the continuous loop $x(\tau)$ with the energy $\mathcal E(x)$.
\red{Finally, the quantum thermal average $\avg{O(\hat q)}_\beta$ can be formally written as}
\begin{equation}
	\avg{O(\hat q)}_\beta = \frac1{\mathcal Z}\int \bigg[
	\frac1{\beta}\int_0^\beta O(x(\tau)) \d\tau
	\bigg]\exp\{-\mathcal E(x)\}\mathcal D[x].
	\label{O express}
\end{equation}
\subsection{Represent the energy function with \red{Matsubara} modes}
\label{section: Matsubara}
Consider the following eigenvalue problem with periodic boundary conditions:
\begin{equation*}
	-\ddot{c}_k(\tau) =
	\omega_k^2 c_k(\tau),~~~~\tau\in[0,\beta],~~~~
	k = 0,1,2,\cdots,
\end{equation*}
where the eigenvalues and eigenfunctions are explicitly given by
$$
\begin{aligned}
\omega_0 & = 0, &
c_0(\tau) & = \sqrt{\frac1\beta}; \\
\omega_{2k-1} & = \frac{2k\pi}{\beta} &
c_{2k-1}(\tau) & = \sqrt{\frac2\beta}
\sin\bigg(\frac{2k\pi\tau}{\beta}\bigg), & k = 1,2,\cdots; \\
\omega_{2k} & = \frac{2k\pi}{\beta}, &
c_{2k}(\tau) & = \sqrt{\frac2\beta}
\cos\bigg(\frac{2k\pi\tau}{\beta}\bigg), & k = 1,2,\cdots.
\end{aligned}
$$
\red{The eigenfunctions $\{c_k(\tau)\}_{k=0}^\infty$ form} the orthonormal Fourier basis of  $L^2([0,\beta];\mathbb R)$, and thus any continuous loop $x(\cdot)$ can be uniquely represented as
\begin{equation*}
	x(\tau) = \sum_{k=0}^\infty \xi_k c_k(\tau),~~~~\tau\in[0,\beta],
\end{equation*}
where $\{\xi_k\}_{k=0}^\infty$ in $\mathbb R^d$ are the coordinates of $x(\cdot)$ in different \red{Matsubara} modes. Conversely, for a given continuous loop $x(\cdot)$, the \red{Matsubara} coordinates are calculated from
\begin{equation*}
	\xi_k = \int_0^\beta x(\tau) c_k(\tau)\d\tau,
	~~~~k=0,1,2,\cdots.
\end{equation*}

Utilizing the \red{Matsubara coordinates}, the energy function $\mathcal E(x)$ in \eqref{energy Ex} is written as
\begin{equation}
	\mathcal E(\xi) = \frac12\sum_{k=0}^\infty \omega_k^2|\xi_k|^2
	+
	\mathcal V(\xi),~~~\text{with}~~\mathcal V(\xi) :=
	\int_0^\beta V\bigg(
	\sum_{k=0}^\infty \xi_k c_k(\tau)
	\bigg)\d\tau,
	\label{energy xi}
\end{equation}
and the target Boltzmann distribution is formally given by $\exp\{-\mathcal E(\xi)\}$.
\subsection{Formulation of the \red{Matsubara mode} PIMD}
In the following, we construct an infinite-dimensional Langevin  dynamics to sample the Boltzmann distribution $\exp\{-\mathcal E(\xi)\}$. \red{Without any preconditioning, the vanilla underdamped Langevin dynamics for sampling $\exp\{-\mathcal E(\xi)\}$ reads
\begin{equation}
\left\{
\begin{aligned}
\dot\xi_k & = \eta_k, \\
\dot\eta_k & = -\omega_k^2 \xi_k -
\int_0^\beta \nabla V(x(\tau)) c_k(\tau)\d\tau
- \gamma \eta_k + \sqrt{2\gamma}\dot B_k,
\end{aligned}
\right.
\label{no Langevin}
\end{equation}
where $\gamma>0$ is the damping rate on each mode, $\{\eta_k\}_{k=0}^\infty$ are the auxiliary velocity variables in $\mathbb R^d$, and $\{B_k\}_{k=0}^\infty$ are independent Brownian motions in $\mathbb R^d$.} However, the high-frequency modes in the energy function $\mathcal E(\xi)$ poses \red{the infamous stiffness problem \cite{pmmLang,md_2} in the time discretization of \eqref{no Langevin}: the time step needs to be extremely small to stabilize of the high-frequency part of the dynamics.}

In this paper, we employ the preconditioning
\cite{pmmLang,pHMC} to scale the internal frequencies of the \red{Matsubara} modes. Introduce $a>0$ and rewrite the energy function $\mathcal E(\xi)$ as
\begin{equation}
	\mathcal E(\xi) = \frac12\sum_{k=0}^\infty
	(\omega_k^2+a)|\xi_k|^2 +
	\mathcal V^a(\xi),~~~\text{with}~~
	\red{\mathcal V^a(\xi) :=
	\int_0^\beta
	V^a\bigg(\sum_{k=0}^\infty \xi_k c_k(\tau)\bigg)
	\d\tau,}
	\label{energy a}
\end{equation}
where the potential function $V^a(q) := V(q) - a|q|^2/2$.
\red{The positive coefficient $\omega_k^2+a$ on each mode indicates that it is reasonable to apply the preconditioning}
\begin{equation}
\left\{
\begin{aligned}
	\dot \xi_k & = \eta_k, \\
	\dot \eta_k & = -\xi_k -
	\frac1{\omega_k^2+a}
		\int_0^\beta \nabla V^a(x(\tau)) c_k(\tau) \d\tau
	 - \gamma \eta_k + \sqrt{\frac{2\gamma}{\omega_k^2+a}} \dot B_k,
\end{aligned}
\right.
\label{Langevin}
\end{equation}
which is an infinite-dimensional Langevin dynamics of the \red{mode coordinates $\{\xi_k\}_{k=0}^\infty$}.
\begin{remark}
If we neglect the gradient term in \eqref{Langevin}, the dynamics of $(\xi_k,\eta_k)$ reads
\begin{equation*}
	\left\{
	\begin{aligned}
	\dot \xi_k & = \eta_k, \\
	\dot \eta_k & = - \xi_k - \gamma \eta_k + \sqrt{\frac{2\gamma}{\omega_k^2+a}}\dot B_k,
	\end{aligned}
	\right.
\end{equation*}
which is a underdamped Langevin dynamics in $\mathbb R^d\times\mathbb R^d$ with the invariant distribution
\begin{equation*}
	\exp\bigg\{-\frac{\omega_k^2+a}2 \big(|\xi_k|^2+|\eta_k|^2\big)\bigg\}.
\end{equation*}
\red{Clearly, the preconditioning removes the stiffness of all Matsubara modes, and $\mathcal O(1)$ time step is applicable for the time discretization.}
\end{remark}
\begin{remark}
\red{The constant $a>0$ is to ensure the frequency of the centroid mode $(k=0)$ is nonzero, so that the preconditioning can be applied on all Matsubara modes.
An alternative preconditioning scheme for \eqref{no Langevin} without introducing $a$ is defined as:
\begin{equation}
\left\{
\begin{aligned}
\xi_0 & = \eta_0, \\
\eta_0 & = -\frac1{\sqrt\beta}\int_0^\beta \nabla V(x(\tau)) \d\tau - \gamma \eta_0 + \sqrt{2\gamma} \dot B_0,
\end{aligned}
\right.
\label{alt 1}
\end{equation}
\begin{equation}
\left\{
\begin{aligned}
\xi_k & = \eta_k, \\
\eta_k & = -\xi_k - \frac1{\omega_k^2} \int_0^\beta
\nabla V(x(\tau)) c_k(\tau) \d\tau - \gamma \eta_k +
\frac{\sqrt{2\gamma}}{\omega_k} \dot B_k,
\end{aligned}
\right.~~~~ k = 1,2,\cdots.
\label{alt 2}
\end{equation}
Moreover, the preconditioned Matsubara mode PIMD \eqref{alt 1} and \eqref{alt 2} can be employed in quantum systems defined on a periodic torus space $\mathbb T^d$.}

\end{remark}
\subsection{Implementation of the \red{Matsubara mode} PIMD in finite dimensions}
At this stage, the infinite-dimensional \red{Matsubara mode PIMD} \eqref{Langevin} is completely formal, and neither its existence or well-posedness is not justified.
For the purpose of numerical simulation, we need to truncate the number of \red{Matsubara} modes in \eqref{Langevin} to a finite integer $N$. In this case the continuous loop $x(\tau)$ is truncated as
\begin{equation*}
	x_N(\tau) = \sum_{k=0}^{N-1} \xi_k c_k(\tau),~~~~
	\tau \in [0,\beta],
\end{equation*}
and the potential energy of the loop in \eqref{energy a} is replaced by
\begin{equation}
	\red{\mathcal V_N^a(\xi)} :=
	\int_0^\beta V^a(x_N(\tau))\d\tau =
	\int_0^\beta V^a\bigg(
	\sum_{k=0}^{N-1} \xi_k c_k(\tau)
	\bigg)\d\tau,~~~~
	\xi \in \mathbb R^{dN}.
	\label{VaN}
\end{equation}
\red{However, even for a finite $N$, the integral in \eqref{VaN}} does not have an explicit expression, and the numerical integration is required to approximate $\mathcal V_N^a(\xi)$. Let $D\in\mathbb N$ be the discretization size in the interval $[0,\beta]$, then \red{we obtain the approximation}
\begin{equation}
	\red{\mathcal V_N^a(\xi) \approx  \mathcal V_{N,D}^a(\xi)} := \beta_D \sum_{j=0}^{D-1} V^a
	\bigg(\sum_{k=0}^{N-1} \xi_k c_k(j\beta_D)\bigg),
	~~~~\xi\in\mathbb R^{dN},
	\label{VaND}
\end{equation}
where $\beta_D = \beta / D$. The gradients of \red{$\mathcal V^a_{N,D}(\xi)$} with respect to each $\xi_k$ are correspondingly
\begin{equation*}
	\nabla_{\xi_k}\mathcal V_{N,D}^a (\xi) =
	\beta_D \sum_{j=0}^{N-1} \nabla V^a(x_N(j\beta_D)) c_k(j\beta_D),
	~~~~ k = 0,1,\cdots,N-1.
\end{equation*}
\begin{remark}
\red{We do not select the high-order numerical integration schemes mainly because the continuous loop $x_N(\tau)$ becomes ragged when the number of modes $N$ is large. The low-regularity of $x_N(\tau)$ makes it ineffectual to apply high-order integration.}
\end{remark}
\noindent
With the \red{potential function $\mathcal V_{N,D}^a(\xi)$}, the Matsubara mode PIMD \eqref{Langevin} is approximated as
\begin{equation}
\left\{
\begin{aligned}
\dot \xi_k & = \eta_k, \\
\dot \eta_k & = -\xi_k - \frac{
\beta_D}{\omega_k^2+a} \sum_{j=0}^{D-1} \nabla V^a(x_N(j\beta_D) )c_k(j\beta_D)
 -\gamma\eta_k + \sqrt{\frac{2\gamma}{\omega_k^2+a}}\dot B_k,
\end{aligned}
\right.~~~k=0,1,\cdots,N-1,
\label{under a}
\end{equation}
whose invariant distribution is the classical Boltzmann distribution:
\begin{equation}
\pi_{N,D}(\xi) \propto \exp\bigg\{
-\frac12\sum_{k=0}^{N-1} (\omega_k^2+a)|\xi_k|^2
-\beta_D \sum_{j=0}^{D-1} V^a
\bigg(\sum_{k=0}^{N-1} \xi_k c_k(j\beta_D)\bigg)
\bigg\}.
\label{pi N}
\end{equation}
A typical choice of the discretization size $D$ is the number of modes $N$, so that the \red{computational cost per time step is equal to $\mathcal O(N\log N)$ in the Fast Fourier Transform}. It is natural from \eqref{O express} that the quantum thermal average $\avg{O(\hat q)}_\beta$ is approximated as
\begin{equation*}
	\avg{O(\hat q)}_\beta \approx
	\avg{O(\hat q)}_{\beta,N,D} =
	\int_{\mathbb R^{dN}}
	\bigg[
	\frac1D\sum_{j=0}^{D-1}
	O\bigg(\sum_{k=0}^{N-1} \xi_k c_k(j\beta_D)\bigg)
	\bigg]\pi_{N,D}(\xi)\d\xi.
\end{equation*}
\red{The long-time simulation of the Matsubara mode PIMD \eqref{under a} at finite $N$ and $D$ then provides an accurate estimate of the statistical average in the distribution $\pi_{N,D}(\xi)$, which is denoted by $\avg{O(\hat q)}_{\beta,N,D}$.}
\subsection{Relation to the standard PIMD}
\red{We demonstrate the connection between the Matsubara mode PIMD and the standard PIMD. Suppose the number of modes $N$ is an odd integer and we view $\{\xi_k\}_{k=0}^{N-1}$ as the normal mode coordinates \cite{simple} in the standard PIMD, then the Boltzmann distribution of the ring polymer beads in the standard PIMD is expressed as
\begin{equation}
\exp\bigg\{
-\frac12\sum_{k=0}^{N-1} (\omega_{k,N}^2+a) |\xi_k|^2 - \beta_N
\sum_{j=0}^{N-1}V^a\bigg(\sum_{k=0}^{N-1} \xi_k c_k(j\beta_N)\bigg)
\bigg\},
\label{under N}
\end{equation}
where $\{\omega_{k,N}\}_{k=0}^{N-1}$ are the normal mode frequencies
\begin{equation*}
	\omega_0 = 0,~~~~
	\omega_{2k-1,N} = \omega_{2k,N} =
	\frac{2}{\beta_N}  \sin\bigg(\frac{k\pi}N\bigg),
	~~~~ k = 1,\cdots,\frac{N-1}2.
\end{equation*}
Furthermore, the preconditioned Langevin dynamics for sampling \eqref{under N} is written as
\begin{equation}
\left\{
\begin{aligned}
\dot \xi_k & = \eta_k, \\
\dot \eta_k & = -\xi_k - \frac{
\beta_D}{\omega_{k,N}^2+a} \sum_{j=0}^{D-1} \nabla V^a(x_N(j\beta_D) )c_k(j\beta_D)
 -\gamma\eta_k + \sqrt{\frac{2\gamma}{\omega_{k,N}^2+a}}\dot B_k,
\end{aligned}
\right.~~~k=0,1,\cdots,N-1.
\label{under std}
\end{equation}

Comparing the dynamics \eqref{under a} and \eqref{under std}, we observe the standard PIMD and the Matsubara mode PIMD (with odd $D=N$) are equivalent except for the mode frequencies.
Since the normal mode frequencies satisfy
\begin{equation*}
	\lim_{N\rightarrow\infty} \omega_{k,N} = \omega_k,~~~~k = 0,1,2,\cdots,
\end{equation*}
we conclude that the Matsubara mode PIMD \eqref{under a} and the
standard PIMD \eqref{under std} have the identical continuum limit, namely the infinite-dimensional Langevin dynamics \eqref{Langevin}.
See Appendix~C for the detailed derivation of the normal mode coordinates.}
\section{Uniform-in-$N$ ergodicity of the Matsubara mode PIMD}
\label{section: convergence analysis}
\red{In this section, we prove the uniform-in-$N$ ergodicity of the Matsubara mode PIMD. First, we present the assumptions and ergodicity results in Section~3.1. Then, we prove the uniform-in-$N$ ergodicity in the overdamped and underdamped cases in Sections~3.2 and 3.3, respectively. Finally, in Section~3.4, we demonstrate that our proof of the uniform-in-$N$ ergodicity in the Matsubara mode PIMD applies to the standard PIMD.}
\subsection{Assumptions and ergodicity results}
\red{For the convenience of analysis, in the Matsubara mode PIMD \eqref{under a} we choose the damping rate $\gamma = 1$, yielding the following underdamped Langevin dynamics:}
\begin{equation}
	\left\{
	\begin{aligned}
	\dot \xi_k & = \eta_k, \\
	\dot \eta_k & = -\xi_k -
	\frac{\beta_D}{\omega_k^2+a} \sum_{j=0}^{D-1} \nabla V^a(x_N(j\beta_D)) c_k(j\beta_D) - \eta_k +
	\sqrt{\frac2{\omega_k^2+a}}\dot B_k.
	\end{aligned}
	\right.
	\label{under}
\end{equation}
\red{A closely related dynamics is the overdamped version of \eqref{under}:}
\begin{equation}
	\dot \xi_k = -\xi_k -
	\frac{\beta_D}{\omega_k^2+a}
	\sum_{j=0}^{D-1} \nabla V^a(x_N(j\beta_D)) c_k(j\beta_D)
	+ \sqrt{\frac2{\omega_k^2+a}}\dot B_k,
	\label{over}
\end{equation}
which can be viewed as the overdamped limit of \eqref{under} as $\gamma\rightarrow\infty$ \cite{explicit}.
\red{The invariant distribution of \eqref{over} is $\pi_{N,D}(\xi)$ defined in \eqref{pi N}, and the invariant distribution of \eqref{under} is}
\begin{equation}
\mu_{N,D}(\xi,\eta) \propto
\exp\bigg(-
\frac12\sum_{k=0}^{N-1}
(\omega_k^2+a) (|\xi_k|^2+|\eta_k|^2) -
\beta_D\sum_{j=0}^{D-1} V^a\bigg(
\sum_{k=0}^{N-1} \xi_k c_k(j\beta_D)
\bigg)
\bigg).
\label{mu N}
\end{equation}
\red{Clearly, $\pi_{N,D}(\xi)$ is the marginal distribution of $\mu_{N,D}(\xi)$ in the variable $\xi\in\mathbb R^{dN}$.}

Under appropriate conditions on the potential $V(q)$, we prove that both the \red{overdamped and underdamped Matsubara mode PIMD in \eqref{over} and \eqref{under}} possess the uniform-in-$N$ ergodicity, namely, the convergence rate to the invariant distribution does not depend on the number of modes $N$.

Before we conduct a detailed discussion on these results, we enumerate all the assumptions required in the proof, mainly on the potential function $V(q)$ in $\mathbb R^d$.
\begin{assumption}
Given $a>0$, the potential function
$$
	V^a(q) = V(q) - \frac{a}2|q|^2,~~~~q\in\mathbb R^d
$$
is twice differentiable in $\mathbb R^d$, and for some constants $M_1,M_2 \Ge0$:
\begin{enumerate}
\item[(i)] $V^a(q)$ can be decomposed as $V^c(q) + V^b(q)$, where $\nabla^2 V^c(q)\Succ O_d$ and $|V^b(q)|\Le M_1$ for any $q\in\mathbb R^d$.
\item[(ii)] $-M_2I_d \Prec \nabla^2 V^a(q) \Prec  M_2I_d$ for any $q\in\mathbb R^d$.
\end{enumerate}
\red{A special assumption required in the underdamped case is:
\begin{enumerate}
\item[(iii)] The number of modes $N$ is no larger than the discretization size $D$, namely, $N\Le D$.
\end{enumerate}
Here, $I_d$ and $O_d$ are the identity and zero matrix in $\mathbb R^{d\times d}$. Assumption (i) can be shortly interpreted as: $V^c(q)$ is globally convex and $V^b(q)$ is globally bounded.}
\end{assumption}

Next, we display the ergodicity results of the \red{Matsubara mode PIMD} in Table \ref{table: result 1}.
\begin{center}
\begin{tabular}{c|c|c}
\toprule
\multicolumn{3}{c}{
\textbf{Uniform-in-$N$ ergodicity of the \red{Matsubara mode} PIMD}
} \\
\midrule
dynamics & overdamped \eqref{over} & underdamped \eqref{under} \\
\midrule
assumption
& (i) $\Rightarrow$ Theorem~\ref{theorem: over ergodicity}
& (i)(ii)(iii) $\Rightarrow$ Theorem~\ref{theorem: under ergodicity} \\
\midrule
distribution & $\pi_{N,D}(\xi)$ in \eqref{pi N} & $\mu_{N,D}(\xi,\eta)$ in \eqref{mu N} \\
\midrule
ergodicity &
$\Ent_{\pi_{N,D}}(P_t f) \Le e^{-2\lambda_1 t} \Ent_{\pi_{N,D}}(f)$ &
$W_{\mu_{N,D}}(P_tf) \Le e^{-2\lambda_2 t} W_{\mu_{N,D}}(f)$ \\
\bottomrule
\end{tabular}
\captionof{table}{The uniform-in-$N$ ergodicity of the Matsubara mode PIMD. The relative entropy $\Ent_{\pi_{N,D}}(f)$ and the entropy-like quantity $W_{\mu_{N,D}}(f)$ are defined in \eqref{entropy_1} and \eqref{entropy_2}.}
\label{table: result 1}
\end{center}
\red{In Table~\ref{table: result 1}, $\Ent_{\pi_{N,D}}(f)$ is the relative entropy of the density function $f(\xi)$ in $\mathbb R^{dN}$:
\begin{equation}
	\Ent_{\pi_{N,D}}(f) := \int_{\mathbb R^{dN}} f\log f\d\pi_{N,D} -
	\int_{\mathbb R^{dN}} f\d\pi_{N,D} \log \int_{\mathbb R^{dN}} f\d\pi_{N,D},
	\label{entropy_1}
\end{equation}
and $W_{\mu_{N,D}}(f)$ is the entropy-like quantity of the density function $f(\xi,\eta)$ in $\mathbb R^{2dN}$:
\begin{equation}
	W_{\mu_{N,D}}(f):=\bigg(\frac{M_2^2}{a^2}+1\bigg) \Ent_{\mu_{N,D}}(f) +
		\sum_{k=0}^{N-1} \frac1{\omega_k^2+a}
		\int_{\mathbb R^{2dN}}
		\frac{|\nabla_{\eta_k}f - \nabla_{\xi_k}f|^2 + |\nabla_{\eta_k}f|^2}{f}\d\mu_{N,D}.
	\label{entropy_2}
\end{equation}
Furthermore,} the convergence rates $\lambda_1$ and $\lambda_2$ are given by
\begin{equation*}
	\lambda_1  = \exp(-4\beta M_1),~~~~
	\lambda_2 = \frac{a^2}{3M_2^2+5a^2} \exp(-4\beta M_1),
\end{equation*}
\begin{remark}
The convergence rates $\lambda_2<\lambda_1$, and they are exponentially small in the low temperature limit $\beta\rightarrow\infty$ if the constant $M_1>0$, which characterizes the non-convexity of the potential $V(q)$.
This is because we utilize the bounded perturbation of the log-Sobolev inequalities (Proposition~5.1.6 of \cite{bakry}).
However, $\lambda_2 < \lambda_1$ does not imply the \red{overdamped Matsubara mode PIMD} converges faster than the underdamped one in practical simulation. With the hypocoercivity methods, it is proved that introducing auxiliary velocity variables accelerates the convergence of the overdamped Langevin dynamics \cite{explicit}.
\end{remark}
\begin{remark}
\red{Assumption (iii) comes from the discrete orthogonal condition \eqref{discrete normalize}:
\begin{equation*}
	\sum_{j=0}^{D-1} c_k(j\beta_D) c_l(j\beta_D) = 0,~~~~
	0\Le k<l \Le N-1,
\end{equation*}
which in general is incorrect when $N>D$. The authors conjecture that the uniform-in-$N$ ergodicity of the underdamped Matsubara mode PIMD \eqref{under} also holds true when $N>D$, nevertheless no viable strategy is available.}
\end{remark}
\subsection{Uniform ergodicity of overdamped Matsubara mode PIMD}
\label{section: overdamped}
We prove the uniform-in-$N$ ergodicity of \eqref{over} in the relative entropy using the  Bakry--\'Emery calculus \cite{bakry}.
The log-Sobolev inequality \red{for the distribution $\pi_{N,D}(\xi)$} produces an explicit convergence rate in high-dimensions.
\begin{theorem}
\label{theorem: over ergodicity}
Under Assumption (i), let $(P_t)_{t\Ge0}$ be the Markov semigroup of the \red{overdamped Matsubara mode} PIMD \eqref{over}, then for any positive smooth function $f(\xi)$ in $\mathbb R^{dN}$,
\begin{equation*}
	\Ent_{\pi_{N,D}}(P_t f) \Le \exp(-2\lambda_1 t)
	\Ent_{\pi_{N,D}}(f),~~~~\forall t\Ge0,
\end{equation*}
where the convergence rate $\lambda_1 = \exp(-4\beta M_1)$.
\end{theorem}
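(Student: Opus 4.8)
The plan is to recognize that the exponential decay of relative entropy asserted here is exactly the integrated form of a log-Sobolev inequality (LSI) for the invariant measure $\pi_{N,D}$, and to establish that LSI with a constant that is manifestly independent of $N$ by combining the Bakry--\'Emery criterion on a convex reference measure with a bounded-perturbation argument. First I would pin down the generator and carr\'e du champ. The overdamped dynamics \eqref{over} is the preconditioned gradient diffusion with generator $L = \sum_{k=0}^{N-1}\frac1{\omega_k^2+a}\big(\Delta_{\xi_k} - \nabla_{\xi_k}\mathcal E\cdot\nabla_{\xi_k}\big)$ and carr\'e du champ $\Gamma(f) = \sum_k \frac1{\omega_k^2+a}|\nabla_{\xi_k}f|^2$, where $\mathcal E = \mathcal E_{N,D}$ is the exponent in \eqref{pi N}. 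Along the semigroup one has $\frac{\d}{\d t}\Ent_{\pi_{N,D}}(P_tf) = -\int \Gamma(P_tf)/P_tf\,\d\pi_{N,D}$, so the claimed decay at rate $2\lambda_1$ follows once the LSI $\Ent_{\pi_{N,D}}(f)\Le \frac1{2\lambda_1}\int \Gamma(f)/f\,\d\pi_{N,D}$ is proven for densities $f$.

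It is convenient to rescale $y_k = \sqrt{\omega_k^2+a}\,\xi_k$, which turns $L$ into a standard overdamped Langevin generator with $\Gamma(f)=|\nabla_y f|^2$ and potential $\tilde{\mathcal E}(y) = \tfrac12|y|^2 + \beta_D\sum_{j=0}^{D-1} V^a(q_j)$, where $q_j = \sum_k \frac{c_k(j\beta_D)}{\sqrt{\omega_k^2+a}}\,y_k$. Using Assumption (i) I would split $V^a = V^c + V^b$, hence $\tilde{\mathcal E} = \tilde{\mathcal E}^c + \Psi$ with $\Psi = \beta_D\sum_j V^b(q_j)$. For the reference measure $\propto e^{-\tilde{\mathcal E}^c}$ I would compute $\nabla^2_y\tilde{\mathcal E}^c = I_{dN} + H^c$, whose $(k,l)$ block equals $\beta_D\sum_j \frac{c_k(j\beta_D)c_l(j\beta_D)}{\sqrt{(\omega_k^2+a)(\omega_l^2+a)}}\nabla^2 V^c(q_j)$. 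Testing against $v=(v_0,\dots,v_{N-1})$ and setting $w_j = \sum_k \frac{c_k(j\beta_D)}{\sqrt{\omega_k^2+a}}\,v_k$ gives $v^\top H^c v = \beta_D\sum_j w_j^\top \nabla^2 V^c(q_j)\, w_j \Ge 0$, since $\nabla^2 V^c \Succ O_d$. Thus $\nabla^2_y\tilde{\mathcal E}^c \Succ I_{dN}$ uniformly in $N$ and $D$, and the Bakry--\'Emery criterion yields an LSI for the reference measure with an $N$-free constant.

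Finally I would control the perturbation $\Psi$. Because $\beta_D D = \beta$ and $|V^b|\Le M_1$, one has $\|\Psi\|_\infty \Le \beta M_1$, so $\mathrm{osc}(\Psi)\Le 2\beta M_1$ independently of $N$ and $D$; the Holley--Stroock bounded-perturbation lemma (Proposition~5.1.6 of \cite{bakry}) then degrades the LSI constant by a factor governed by $\mathrm{osc}(\Psi)$, producing exactly $\lambda_1 = \exp(-4\beta M_1)$. Note that only Assumption (i) is used: neither the two-sided Hessian bound (ii) nor the condition $N\Le D$ (iii) enters, consistent with Table~\ref{table: result 1}.

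The crucial point, and the step I expect to require the most care, is ensuring that both inputs are genuinely $N$-free. The Gaussian confinement contributes curvature exactly $1$ on every mode regardless of $\omega_k$ precisely because the preconditioning $1/(\omega_k^2+a)$ normalizes each mode to the same Ornstein--Uhlenbeck rate, and the perturbation oscillation stays bounded by $2\beta M_1$ only because the quadrature weight $\beta_D$ times the $D$ summands collapses to $\beta$. The main obstacle is verifying the positivity of $H^c$ uniformly in $N$, i.e. that the mode mixing through the matrix $c_k(j\beta_D)$ together with the coupled quadrature sum does not spoil convexity; this is resolved cleanly by the $w_j$ substitution above, which reduces the quadratic form to a sum of manifestly nonnegative terms.
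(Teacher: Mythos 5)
Your proposal is correct and follows essentially the same route as the paper: the same decomposition $V^a = V^c + V^b$ from Assumption (i), the same quadratic-form substitution ($w_j$ in your notation, $v_j$ in the paper's Lemma~\ref{appendix: 1}) to get uniform-in-$N$ convexity of the $V^c$ part, the Bakry--\'Emery criterion for the convex reference measure, and the same bounded-perturbation step via Proposition~5.1.6 of \cite{bakry} yielding $\lambda_1 = \exp(-4\beta M_1)$. The only difference is presentational: you rescale $y_k = \sqrt{\omega_k^2+a}\,\xi_k$ to make the diffusion isotropic and bound the Hessian directly, whereas the paper stays in the $\xi$ coordinates and verifies $\Gamma_2(f,f)\Ge\Gamma_1(f,f)$ for the weighted carr\'e du champ --- these are equivalent formulations of the same curvature condition.
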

\begin{proof}
We study the overdamped \red{Matsubara mode PIMD} driven by the convex potential $V^c(q)$, and prove the corresponding uniform-in-$N$ log-Sobolev inequality. Utilizing the bounded perturbation, we obtain the uniform-in-$N$ log-Sobolev inequality for \eqref{over}.\\[6pt]
\textbf{1. Ergodicity of overdamped Matsubara mode PIMD driven by $V^c(q)$}\\[6pt]
For notational convenience, introduce the potential function of the convex part $V^c(q)$:
\begin{equation*}
	\mathcal V_{N,D}^c(\xi) := \beta_D \sum_{j=0}^{D-1} V^c(x_N(j\beta_D)) = \beta_D \sum_{j=0}^{D-1}
	V^c\bigg(\sum_{k=0}^{N-1} \xi_k c_k(j\beta_D)\bigg),
\end{equation*}
Since $V^c(q)$ is globally convex in $\mathbb R^d$, it is easy to deduce $\mathcal V_{N,D}^c(\xi)$ is globally convex in the mode coordinates $\xi = \{\xi_k\}_{k=0}^{N-1}$ \red{(see Lemma~\ref{appendix: 1} in Appendix~\ref{appendix: proof convergence})}.
Next consider \red{the overdamped Matsubara mode PIMD} driven by $\mathcal V_{N,D}^c(\xi)$:
\begin{equation}
	\dot \xi_k = -\xi_k - \frac1{\omega_k^2+a}
	\nabla_{\xi_k} \mathcal V_{N,D}^c(\xi) +
	\sqrt{\frac2{\omega_k^2+a}}\dot B_k,~~~~
	k=0,1,\cdots,N-1,
	\label{over_c}
\end{equation}
It is easy to see the generator of \eqref{over_c} is given by
\begin{equation*}
	L^c = - \sum_{k=0}^{N-1}
	\bigg(\xi_k +
	\frac1{\omega_k^2+a} \nabla_{\xi_k} \mathcal V_{N,D}^c(\xi)
	\bigg)\cdot \nabla_{\xi_k} +
	\sum_{k=0}^{N-1} \frac1{\omega_k^2+a} \Delta_{\xi_k},
\end{equation*}
and the invariant distribution of \eqref{over_c} is
\begin{equation}
	\pi_{N,D}^c(\xi) \propto
	\exp\bigg\{
		-\frac12\sum_{k=0}^{N-1}
		(\omega_k^2+a) |\xi_k|^2
		- \beta_D \sum_{j=0}^{D-1}
			V^c\bigg(\sum_{k=0}^{N-1} \xi_k c_k(j\beta_D)\bigg)
	\bigg\}.
	\label{pi Nc}
\end{equation}

To establish the log-Sobolev inequality for the distribution $\pi_{N,D}^c(\xi)$, we compute the carr\'e du champ operator $\Gamma_1(f,f)$ and the iterated operator $\Gamma_2(f,f)$ corresponding to the generator $L^c$ (see Definition 1.4.2 and Equation (1.16.1) in \cite{bakry}). By direct calculation,
\begin{align*}
\Gamma_1(f,g) & = \sum_{k=0}^{N-1}
\frac{\nabla_{\xi_k}f\cdot \nabla_{\xi_k} g}{\omega_k^2+a}, \\
	\Gamma_2(f,g) & =
	\sum_{k,l=0}^{N-1} \frac{\nabla_{\xi_k\xi_l}^2 f :
		\nabla_{\xi_k\xi_l}^2 g}{(\omega_k^2+a)(\omega_l^2+a)}
	 + \sum_{k=0}^{N-1}
	\frac{\nabla_{\xi_k} f \cdot\nabla_{\xi_k} g}{\omega_k^2+a}
	  +
	\sum_{k,l=0}^{N-1} \frac{\nabla_{\xi_k} f\cdot \nabla^2_{\xi_k\xi_l} \mathcal V_{N,D}^c(\xi)\cdot \nabla_{\xi_l} g}{(\omega_k^2+a)(\omega_l^2+a)}.
\end{align*}
Here, the dot product $u\cdot B\cdot v$ for $u\in\mathbb R^d$, $B\in\mathbb R^{d\times d}$ and $v\in\mathbb R^d$ means
\begin{equation*}
	u\cdot B\cdot v = u^\T B v = \sum_{p,q=1}^d
	u_p B_{pq} v_q,
\end{equation*}
and the double dot product $A:B$ for $A\in\mathbb R^{d\times d}$ and $B\in\mathbb R^{d\times d}$ means
\begin{equation*}
	A:B = \mathrm{Tr}[A^\T B] = \sum_{p,q=1}^d
	A_{pq} B_{pq}.
\end{equation*}
\red{Utilizing the convexity of the potential function $\mathcal V_{N,D}^c(\xi)$}, we obtain
\begin{equation*}
\Gamma_2(f,f) \Ge
\sum_{k=0}^{N-1} \frac{|\nabla_{\xi_k} f|^2}{\omega_k^2+a}  + \sum_{k,j=0}^{N-1} \frac{\nabla_{\xi_k} f\cdot \nabla^2_{\xi_k\xi_j} \mathcal V_{N,D}^c(\xi)\cdot \nabla_{\xi_j} f}{(\omega_k^2+a)(\omega_j^2+a)} \Ge \sum_{k=0}^{N-1}
\frac{|\nabla_{\xi_k}f|^2}{\omega_k^2+a} = \Gamma_1(f,f),
\end{equation*}
hence the log-Sobolev constant for $\pi_{N,D}^c(\xi)$ is 1 according to Proposition 5.7.1 of \cite{bakry}. The log-Sobolev inequality for $\pi_{N,D}^c(\xi)$ then reads
\begin{equation}
	\Ent_{\pi_{N,D}^c}(f) \Le \frac12
	\int_{\mathbb R^{dN}} \frac{\Gamma_1(f,f)}{f}
	\d\pi_{N,D}^c = \frac12
	\sum_{k=0}^{N-1} \frac1{\omega_k^2+a}
	\int_{\mathbb R^{dN}}
	\frac{|\nabla_{\xi_k} f|^2}{f} \d\pi_{N,D}^c.
	\label{LS_c}
\end{equation}
\red{Note that the relative entropy here is defined with respect to the distribution $\pi_{N,D}^c(\xi)$ with $V^c(q)$ rather than the distribution $\pi_{N,D}(\xi)$ with $V^a(q)$.}\\[6pt]
\textbf{3. Ergodicity of overdamped Matsubara mode PIMD driven by $V^a(q)$}\\[6pt]
Let $Z_{N,D}$ and $Z_{N,D}^c$ be the normalization constants of the distributions $\pi_{N,D}(\xi)$ and $\pi_{N,D}^c(\xi)$,
\begin{equation*}
\begin{aligned}
Z_{N,D} & = \int_{\mathbb R^{dN}}\exp\bigg(-\frac12\sum_{k=0}^{N-1} (\omega_k^2+a) |\xi_k|^2 - \beta_D \sum_{j=0}^{D-1} V^a(x_N(j\beta_D))\bigg)
\d\xi, \\
Z_{N,D}^c & = \int_{\mathbb R^{dN}}\exp\bigg(-\frac12\sum_{k=0}^{N-1} (\omega_k^2+a) |\xi_k|^2 - \beta_D \sum_{j=0}^{D-1} V^c(x_N(\beta_D))\bigg)
\d\xi.
\end{aligned}
\end{equation*}
\red{Using the inequality $|V^a(q) - V^c(q)| = |V^b(q)| \Le M_1$, we have
\begin{equation*}
	|\mathcal V_{N,D}^a(\xi) - \mathcal V_{N,D}^c(\xi)| \Le \beta_D\sum_{j=0}^{D-1} \big|V^b(x_N(j\beta_D))\big| \Le \beta M_1,
\end{equation*}
and thus the constants $Z_{N,D}$ and $Z_{N,D}^c$ satisfy}
\begin{equation*}
	\frac{Z_{N,D}}{Z_{N,D}^c} \in \big[\exp(-\beta M_1),\exp(\beta M_1)\big].
\end{equation*}
As a result, the density functions $\pi_{N,D}(\xi)$ and $\pi_{N,D}^c(\xi)$ satisfy
\begin{equation*}
	\frac{\pi_{N,D}^c(\xi)}{\pi_{N,D}(\xi)} =
	\frac{Z_{N,D}}{Z_{N,D}^c} \exp\bigg(
	\beta_D \sum_{j=0}^{D-1} V^b(x_N(j\beta_D))
	\bigg) \in \big[\exp(-2\beta M_1), \exp(2\beta M_1)\big].
\end{equation*}
Using the bounded perturbation (Proposition 5.1.6 of \cite{bakry}),
we obtain from  \eqref{LS_c} that
\begin{equation*}
	\exp(-4\beta M_1) \Ent_{\pi_{N,D}}(f) \Le \frac12
	\sum_{k=0}^{N-1} \frac1{\omega_k^2+a}
	\int_{\mathbb R^{dN}}
	\frac{|\nabla_{\xi_k} f|^2}{f} \d\pi_{N,D}.
\end{equation*}
Hence for the rate $\lambda_1 = \exp(-4\beta M_1)$, the relative entropy has exponential decay,
\begin{equation*}
	\Ent_{\pi_{N,D}}(P_t f) \Le \exp(-2\lambda_1 t)
	\Ent_{\pi_{N,D}}(f),~~~~\forall t\Ge0,
\end{equation*}
for any positive smooth function $f(\xi)$.
\end{proof}
\red{In particular, the convergence rate $\lambda_1$ does not depend on the number of modes $N$ or the discretization size $D$, hence we conclude the uniform-in-$N$ ergodicity of the overdamped Matsubara mode PIMD \eqref{over}.}
\subsection{Uniform ergodicity of underdamped Matsubara mode PIMD}
\label{section: underdamped}
We prove the uniform-in-$N$ ergodicity of \eqref{under} in the \red{entropy-like quantity \eqref{entropy_2}}, and the main technique is the generalized $\Gamma$ calculus developed in \cite{m1,m2}. The generalized $\Gamma$ calculus is an extension of the Bakry--\'Emery calculus and can be applied on stochastic processes with degenerate diffusions. The generalized $\Gamma$ calculus is largely inspired from the hypocoercivity theory \cite{villani} of Villani, and is able to produce an explicit convergence rate in the relative entropy rather than $H^1$ or $L^2$.
For convenience, we present a brief review of the theory of the generalized $\Gamma$ calculus in Appendix \ref{appendix: review gamma}.

Recall that \red{the invariant distribution of the underdamped Matsubara mode PIMD \eqref{under} is} $\mu_{N,D}(\xi,\eta)$ defined in \eqref{mu N}.
\begin{theorem}
\label{theorem: under ergodicity}
Under Assumptions (i)(ii)(iii), let $(P_t)_{t\Ge0}$ be the Markov semigroup of the underdamped \red{Matsubara mode PIMD} \eqref{under}, then for any positive smooth function $f(\xi,\eta)$ in $\mathbb R^{2dN}$,
\begin{equation*}
	W_{\mu_{N,D}}(P_tf) \Le \exp(-2\lambda_2 t) W_{\mu_{N,D}}(f),~~~~
	\forall t\Ge0,
\end{equation*}
where the convergence rate $\lambda_2 = \frac{a^2}{3M_2^2+5a^2} \exp(-4\beta M_1)$.
\end{theorem}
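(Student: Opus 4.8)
The plan is to mirror the overdamped argument of Theorem~\ref{theorem: over ergodicity} but inside the entropic hypocoercivity framework supplied by the generalized $\Gamma$ calculus. First I would write the generator of \eqref{under} as $L = L_S + L_A$, where the symmetric part $L_S = \sum_k \frac1{\omega_k^2+a}\big(\Delta_{\eta_k} - (\omega_k^2+a)\eta_k\cdot\nabla_{\eta_k}\big)$ is a mode-wise Ornstein--Uhlenbeck diffusion in the velocities, and the antisymmetric part $L_A = \sum_k \eta_k\cdot\nabla_{\xi_k} - \sum_k \big(\xi_k + \frac1{\omega_k^2+a}\nabla_{\xi_k}\mathcal V_{N,D}^a\big)\cdot\nabla_{\eta_k}$ is the Hamiltonian transport. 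The carr\'e du champ of $L$ controls only velocity gradients, $\Gamma_1(f,f) = \sum_k \frac1{\omega_k^2+a}|\nabla_{\eta_k}f|^2$, which is the source of the degeneracy; the twisted Fisher-information term in $W_{\mu_{N,D}}$, built from the positive-definite per-mode quadratic form associated with $|\nabla_{\eta_k}f - \nabla_{\xi_k}f|^2 + |\nabla_{\eta_k}f|^2$, is engineered to recover coercivity in $\xi$ through the commutator of $L_A$ with the gradient fields.

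Second, I would establish a uniform-in-$N$ log-Sobolev inequality for the full measure $\mu_{N,D}(\xi,\eta)$. Since $\mu_{N,D}$ factorizes as the product of the position marginal $\pi_{N,D}(\xi)$ and a Gaussian velocity measure whose $k$-th mode has variance $(\omega_k^2+a)^{-1}$, the tensorization property of the log-Sobolev inequality lets me combine the Gaussian inequality (whose mode-wise constant is controlled by $\omega_k^2+a \Ge a$) with the inequality for $\pi_{N,D}$ already proved in Theorem~\ref{theorem: over ergodicity}. This produces a log-Sobolev inequality for $\mu_{N,D}$ with a constant independent of $N$ and carrying the factor $\exp(-4\beta M_1)$ from the bounded perturbation.

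Third --- the core step --- I would differentiate $W_{\mu_{N,D}}(P_tf)$ along the flow and bound the dissipation from below. The entropy contribution dissipates through $\Gamma_1$, i.e.\ only in the velocity gradients, whereas the twisted Fisher-information contribution, after commuting $\nabla_{\xi_k}$ and $\nabla_{\eta_k}$ through $L$, yields coercive diagonal terms together with indefinite cross-terms of two types. The first type involves the mode-coordinate Hessian $\nabla^2_\xi \mathcal V_{N,D}^a$; here Assumption (iii) is essential, because the discrete orthogonality $\beta_D\sum_j c_k(j\beta_D)c_l(j\beta_D) = \delta_{kl}$ (valid when $N\Le D$) collapses the quadrature weight matrix to the identity and thereby transfers the continuous bound of Assumption (ii) into the uniform estimate $-M_2 I_{dN} \Prec \nabla^2_\xi \mathcal V_{N,D}^a \Prec M_2 I_{dN}$, independent of $N$. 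The second type is the position--velocity coupling generated by the transport $L_A$; these are dominated by Young's inequality after splitting $\mathcal V^a_{N,D} = \mathcal V^c_{N,D} + \mathcal V^b_{N,D}$, so that the convex part contributes a favorable sign (via the appendix lemma) while the bounded part's Hessian is absorbed using the $M_2$ bound and the weight $M_2^2/a^2 + 1$ placed in front of the entropy. Choosing these Young constants together with the twisting coefficients produces exactly the quadratic form in $(\nabla_{\xi_k}f,\nabla_{\eta_k}f)$ whose smallest eigenvalue is $\tfrac{a^2}{3M_2^2+5a^2}$, and feeding in the log-Sobolev inequality of the second step to convert the residual entropy gives $\tfrac{\d}{\d t} W_{\mu_{N,D}}(P_tf) \Le -2\lambda_2 W_{\mu_{N,D}}(P_tf)$, whence the claimed exponential decay by Gr\"onwall's lemma.

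The main obstacle is the sign-control inside this third step: because the diffusion is degenerate the naive dissipation sees no $\xi$-gradient, so one must verify that the commutator terms produced by the Hamiltonian transport, once the indefinite potential-Hessian and coupling contributions are bounded using Assumptions (i)--(iii), still leave a strictly positive quadratic form in each per-mode gradient pair, \emph{uniformly in} $N$. The delicate balance is that the twisting weights in $W_{\mu_{N,D}}$ and the Young constants must be tuned simultaneously so that the $M_2$-dependent bad terms are absorbed without introducing any $N$-dependence into the rate, which is precisely what the weight $1/(\omega_k^2+a)$ and the orthogonality furnished by Assumption (iii) are designed to ensure.
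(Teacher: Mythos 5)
Your overall architecture coincides with the paper's: a tensorized log-Sobolev inequality for $\mu_{N,D}=\pi_{N,D}\otimes\nu_N$ carrying the bounded-perturbation factor $e^{-4\beta M_1}$, a twisted Fisher information built from $|\nabla_{\eta_k}f-\nabla_{\xi_k}f|^2+|\nabla_{\eta_k}f|^2$, commutator estimates, the discrete orthogonality of Assumption (iii) to obtain a uniform-in-$N$ Hessian bound, and Gr\"onwall via a generalized curvature-dimension condition. However, there is a genuine flaw in your core dissipation step, namely the treatment of the ``second type'' of cross terms. You claim the position--velocity coupling terms are handled by Young's inequality \emph{after splitting} $\mathcal V^a_{N,D}=\mathcal V^c_{N,D}+\mathcal V^b_{N,D}$, with the convex part contributing a favorable sign and ``the bounded part's Hessian'' absorbed via the $M_2$ bound. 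This cannot work as stated, for two reasons. First, Assumption (i) controls only the values of $V^b$ (by $M_1$); it gives no control whatsoever on $\nabla^2 V^b$, and Assumption (ii) bounds $\nabla^2 V^a$, not $\nabla^2 V^b$, so the Hessian bound you invoke for the bounded part does not exist. Second, positive semidefiniteness of $\nabla^2_{\xi}\mathcal V^c_{N,D}$ (Lemma~\ref{appendix: 1}) yields a sign only for quadratic forms $Z^\T\Sigma^c Z$ with the \emph{same} vector on both sides---which is exactly why convexity suffices in the overdamped $\Gamma_2\Ge\Gamma_1$ computation of Theorem~\ref{theorem: over ergodicity}---but the cross terms here pair two different gradient fields, and $X^\T\Sigma^c Y$ has neither a sign nor an upper bound, since convexity provides no upper bound on $\nabla^2 V^c$.

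The repair, and what the paper actually does, is that the convex/bounded splitting enters \emph{only} through the log-Sobolev inequality and never appears in the commutator estimate. With the twisted fields $X_k\propto(\nabla_{\eta_k}-\nabla_{\xi_k})f$ and $Y_k\propto\nabla_{\eta_k}f$, the transport-induced cross terms cancel identically: summing $(\nabla_{\eta_k}f-\nabla_{\xi_k}f)\cdot[L,\nabla_{\eta_k}-\nabla_{\xi_k}]f+\nabla_{\eta_k}f\cdot[L,\nabla_{\eta_k}]f$ over $k$ leaves exactly $|X|^2-X^\T\Sigma Y$, where $\Sigma$ is the scaled Hessian of the \emph{full} potential $\mathcal V^a_{N,D}$. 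Only then do Assumptions (ii) and (iii) enter: Lemma~\ref{appendix: 2} gives $-\frac{M_2}{a}I_{dN}\Prec\Sigma\Prec\frac{M_2}{a}I_{dN}$ (this is your ``first type'' estimate, which is correct), hence $\Gamma_{\Phi_2}(f)\Ge\big(|X|^2-\tfrac{M_2}{a}|X||Y|\big)/f$, and Young's inequality with the entropy weight $\frac{M_2^2}{a^2}+1$ yields the condition $\Gamma_{\Phi_2}(f)-\tfrac12\Phi_2(f)+\big(\tfrac{M_2^2}{a^2}+1\big)\Gamma_{\Phi_1}(f)\Ge\tfrac1{2f}\big(|X|-\tfrac{M_2}{a}|Y|\big)^2\Ge0$. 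A minor further correction: the rate $\lambda_2=\frac{a^2}{3M_2^2+5a^2}e^{-4\beta M_1}$ does not arise as the smallest eigenvalue of a per-mode quadratic form; it comes from the generic decay rate $\frac{2\rho}{1+mc}$ of Theorem~\ref{theorem: generalized} with $\rho=\tfrac12$, $m=\tfrac{M_2^2}{a^2}+1$, $c=\tfrac3{2\lambda_1}$, bounded below using $\lambda_1\Le1$.
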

\begin{proof}
The proof is accomplished in several steps.
First, we establish the uniform-in-$N$ log-Sobolev inequality for the distribution $\mu_{N,D}(\xi,\eta)$. Second, introduce the functions
\begin{equation*}
	\Phi_1(f) = f\log f,~~~~
	\Phi_2(f) = \sum_{k=0}^{N-1} \frac1{\omega_k^2+a}
	\frac{|\nabla_{\eta_k}f - \nabla_{\xi_k}f|^2 + |\nabla_{\eta_k}f|^2}f
\end{equation*}
\red{and compute the generalized $\Gamma$ operators $\Gamma_{\Phi_1}(f)$ and $\Gamma_{\Phi_2}(f)$.}
Finally, by validating the generalized curvature-dimension condition
\begin{equation*}
	\Gamma_{\Phi_2}(f) - \frac12 \Phi_2(f) +
	\bigg(\frac{M_2^2}{a^2}+1\bigg)\Gamma_{\Phi_1}(f) \Ge 0,
\end{equation*}
we can apply Theorem~\ref{theorem: generalized} to prove the exponential decay of the quantity $W(P_t f)$.\\[6pt]
\textbf{1. Uniform-in-$N$ log-Sobolev inequality for $\mu_N(\xi,\eta)$}\\[6pt]
In Theorem~\ref{theorem: over ergodicity}, the log-Sobolev inequality for the distribution $\pi_{N,D}(\xi)$ holds true:
\begin{equation*}
	\lambda_1 \Ent_{\pi_{N,D}}(f) \Le \frac12
	\sum_{k=0}^{N-1} \frac1{\omega_k^2+a}
	\int_{\mathbb R^{dN}} \frac{|\nabla_{\xi_k} f|^2}{f}\d\pi_{N,D},
\end{equation*}
where the convergence rate $\lambda_1 = \exp(-4\beta M_1)$. For the velocity variables $\{\eta_k\}_{k=0}^{N-1}$ in $\mathbb R^{dN}$, define the Gaussian distribution $\nu_N(\eta)$ by its density function:
\begin{equation*}
	\nu_N(\eta) \propto
	\exp\bigg(-\frac12\sum_{k=0}^{N-1}
	(\omega_k^2+a)|\eta_k|^2\bigg),
	~~~~ \eta \in \mathbb R^{dN},
\end{equation*}
then log-Sobolev inequality for $\nu_N$ holds true,
\begin{equation*}
	\Ent_{\nu_N}(f) \Le \frac12
	\sum_{k=0}^{N-1} \frac1{\omega_k^2+a}
	\int_{\mathbb R^{dN}} \frac{|\nabla_{\eta_k}f|^2}f\d\nu_N.
	\label{LS eta}
\end{equation*}
Since the distribution $\mu_{N,D}(\xi,\eta) = \pi_{N,D}(\xi) \otimes \nu_N(\eta)$ is the tensor product, Proposition~5.2.7 of \cite{bakry} yields the \red{log-Sobolev inequality for the product distribution:}
\begin{equation}
	\lambda_1\mathrm{Ent}_{\mu_{N,D}}(f)  \Le \frac12 \sum_{k=0}^{N-1}
	\frac1{\omega_k^2+a}\int_{\mathbb R^{2dN}} \frac{|\nabla_{\xi_k}f|^2+|\nabla_{\eta_k}f|^2}{f}\d\mu_{N,D},
	\label{LS}
\end{equation}
where the convergence rate is determined by the smaller one of the rates $\lambda_1 = \exp(-4\beta M_1)$ and $1$, which is $\lambda_1$ itself.
Note that \eqref{LS} does not imply the ergodicity of  \eqref{under} directly, because \eqref{under} has degenerate diffusion in the $\eta$ variable.\\[6pt]
\textbf{2. Calculation of the generalized $\Gamma$ operators for $\Phi_1(f)$ and $\Phi_2(f)$}\\[6pt]
We still use the notation
\begin{equation*}
\mathcal V_{N,D}^a(\xi) = \beta_D\sum_{j=0}^{D-1} V^a\bigg(\sum_{k=0}^{N-1} \xi_k c_k(j\beta_D)\bigg),~~~~\xi\in\mathbb R^{dN},
\end{equation*}
and the generator of \eqref{under} is given by
$$
	L = \sum_{k=0}^{N-1} \eta_k \cdot \nabla_{\xi_k}
	- \sum_{k=0}^{N-1} \bigg(
	\xi_k + \eta_k + \frac1{\omega_k^2+a}
	\nabla_{\xi_k} \mathcal V_{N,D}^a(\xi)
	\bigg) \cdot \nabla_{\eta_k} + \sum_{k=0}^{N-1}
	\frac{\Delta_{\xi_k}}{\omega_k^2+a}.
$$
By direct calculation, the commutators $[L,\nabla_{\xi_k}]$ and $[L,\nabla_{\eta_k}]$ are given by
\begin{equation*}
	[L,\nabla_{\xi_k}] = \nabla_{\eta_k} + \sum_{l=0}^{N-1}
	\frac1{\omega_l^2+a} \nabla_{\xi_k\xi_l}^2
	\mathcal V_{N,D}^a(\xi)\cdot \nabla_{\eta_l},~~~~
	[L,\nabla_{\eta_k}] = \nabla_{\eta_k} - \nabla_{\xi_k}.
\end{equation*}
Inspired from Example~3 of \cite{m2},
define the functions
\begin{equation}
	\Phi_1(f) = f\log f,~~~
	\Phi_2(f) = \sum_{k=0}^{N-1} \frac1{\omega_k^2+a}
	\frac{|\nabla_{\eta_k}f - \nabla_{\xi_k} f|^2 + |\nabla_{\eta_k}f|^2}f,
\end{equation}
where $\Phi_2(f)$ can be viewed as a twisted form of
\begin{equation*}
	\sum_{k=0}^{N-1} \frac1{\omega_k^2+a}
	\frac{|\nabla_{\xi_k}f|^2 + |\nabla_{\eta_k}f|^2}{f},
\end{equation*}
which appears in the RHS of the log-Sobolev inequality \eqref{LS}.
From Example~\ref{example: 2} in Appendix \ref{appendix: review gamma}, the generalized $\Gamma$ operator $\Gamma_{\Phi_1}(f)$ is given by
\begin{equation}
	\Gamma_{\Phi_1}(f) = \frac12\sum_{k=0}^{N-1}
	\frac1{\omega_k^2+a} \frac{|\nabla_{\eta_k}f|^2}{f}.
\end{equation}
To compute $\Gamma_{\Phi_2}(f)$, we write $\Phi_2(f) = \sum_{k=0}^{N-1} \frac1{\omega_k^2+a}
	\Phi_{2,k}(f)$, where
\begin{equation*}
	\Phi_{2,k}(f) = \frac{|\nabla_{\eta_k}f - \nabla_{\xi_k} f|^2 + |\nabla_{\eta_k}f|^2}f,~~~~ k =0,1,\cdots,N-1.
\end{equation*}
According to Example \ref{example: 4} in Appendix \ref{appendix: review gamma}, we have
\begin{align*}
 f\cdot \Gamma_{\Phi_{2,k}}(f) & \Ge
	(\nabla_{\eta_k}f - \nabla_{\xi_k}f) \cdot
	[L,\nabla_{\eta_k} - \nabla_{\xi_k}]f +
	\nabla_{\eta_k}f \cdot [L,\nabla_{\eta_k}] f \\
	& = |\nabla_{\eta_k}f - \nabla_{\xi_k}f|^2 -
	(\nabla_{\eta_k}f - \nabla_{\xi_k}f) \cdot
	\sum_{l=0}^{N-1} \frac1{\omega_l^2+a}
	\nabla_{\xi_k\xi_l}^2 \mathcal V_{N,D}^a(\xi) \cdot \nabla_{\eta_l} f.
\end{align*}
Taking the summation over $k = 0,1,\cdots,N-1$, we obtain
\begin{equation}
	f\cdot \Gamma_{\Phi_2}(f)
\Ge \sum_{k=0}^{N-1} \frac{|\nabla_{\eta_k} f - \nabla_{\xi_k} f|^2}{\omega_k^2+a} -
\sum_{k,l=0}^{N-1} \frac{\nabla_{\eta_k} f - \nabla_{\xi_k}f}{\omega_k^2+a}
\cdot \nabla_{\xi_k\xi_l}^2 \mathcal V_{N,D}^a(\xi) \cdot
\frac{\nabla_{\eta_l}f}{\omega_l^2+a}.
\label{Gamma 2 bound}
\end{equation}
\red{To further simplify the expression of $\Gamma_{\Phi_2}(f)$,}
define the vectors $X,Y\in\mathbb R^{dN}$ by
\begin{equation}
	X = \bigg\{\frac{\nabla_{\eta_k}f - \nabla_{\xi_k}f}{
	\sqrt{\omega_k^2+a}} \bigg\}_{k=0}^{N-1} \in \mathbb R^{dN},~~~~
	Y = \bigg\{
	\frac{\nabla_{\eta_k}f}{\sqrt{\omega_k^2+a}}
	\bigg\}_{k=0}^{N-1} \in \mathbb R^{dN},
	\label{XY}
\end{equation}
then the inequality \eqref{Gamma 2 bound} can be equivalently written as
\begin{equation}
	\Gamma_{\Phi_2}(f) \Ge \frac{|X|^2 - X^\T \Sigma Y}{f},
	\label{Gamma 2 ge}
\end{equation}
where the symmetric matrix $\Sigma\in \mathbb R^{dN\times dN}$ is given by
\begin{equation*}
	\Sigma_{kl} =
	 \frac1{
	 \sqrt{(\omega_k^2+a)
	 (\omega_l^2+a)}}
	 \nabla_{\xi_k\xi_l}^2 \mathcal V_{N,D}^a(\xi)
	 \in \mathbb R^{d\times d},~~~~
	 k,l = 0,1,\cdots,N-1.
\end{equation*}
\red{By Lemma~\ref{appendix: 2} we have} $-\frac{M_2}{a} I_{dN} \Prec \Sigma \Prec \frac{M_2}{a} I_{dN}$, hence
\eqref{Gamma 2 ge} directly produces
\begin{equation}
	\Gamma_{\Phi_2}(f) \Ge \frac{|X|^2 - \frac{M_2}{a}|X||Y|}{f}.
	\label{Gamma 2 new}
\end{equation}
In conclusion, the functions $\Phi_1(f)$, $\Phi_2(f)$ and their generalized $\Gamma$ operators satisfy
\begin{equation*}
	\Phi_1(f) = f\log f,~~~~
	\Phi_2(f) = \frac{|X|^2+|Y|^2}f,~~~~
	\Gamma_{\Phi_1}(f)= \frac{|Y|^2}{2f},~~~~
	\Gamma_{\Phi_2}(f) \Ge \frac{|X|^2 - \frac{M_2}{a}|X||Y|}{f},
\end{equation*}
where the vectors $X,Y\in\mathbb R^{dN}$ are given in \eqref{XY}.\\[6pt]
\textbf{3. Generalized curvature-dimension condition produces ergodicity}\\[6pt]
Let us summarize the functional inequalities. The log-Sobolev inequality \eqref{LS} implies
\begin{equation*}
	\lambda_1\Ent_{\mu_{N,D}}(f) \Le \frac12
	\int_{\mathbb R^{2dN}} \frac{|X+Y|^2+|Y|^2}f\d\mu_{N,D} \Le
	\frac32 \int_{\mathbb R^{2dN}} \frac{|X|^2 + |Y|^2}f\d\mu_{N,D},
\end{equation*}
hence with the expressions of $\Phi_1(f)$ and $\Phi_2(f)$, we can equivalently write
\begin{equation}
\frac{2\lambda_1}3 \bigg(\int_{\mathbb R^{2dN}}
\Phi_1(f)\d\mu_{N,D} - \Phi_1\bigg(
\int_{\mathbb R^{2dN}} f\d\mu_{N,D}
\bigg)\bigg) \Le \int_{\mathbb R^{2dN}}
\Phi_2(f) \d\mu_{N,D}.
\label{functional 1}
\end{equation}
The inequality \eqref{Gamma 2 new} implies
\begin{equation*}
	\Gamma_{\Phi_2}(f) - \frac12 \Phi_2(f) \Ge
	\frac{|X|^2 - 2\frac{M_2}{a}|X||Y| - |Y|^2}{2f},
\end{equation*}
and thus using the expression of $\Gamma_{\Phi_1}(f)$ we have
\begin{equation}
	\Gamma_{\Phi_2}(f) - \frac12\Phi_2(f) +
	\bigg(\frac{M_2^2}{a^2}+1\bigg)
	\Gamma_{\Phi_1}(f) \Ge
	\frac{(|X| - \frac{M_2}{a}|Y|)^2}{2f} \Ge 0.
	\label{functional 2}
\end{equation}
Utilizing Theorem \ref{theorem: generalized} in Appendix \ref{appendix: review gamma}, define the entropy-like quantity  by
\begin{equation*}
	W_{\mu_{N,D}}(f) = \bigg(\frac{M_2^2}{a^2}+1\bigg)
	\bigg(
	\int_{\mathbb R^{2dN}}
	\Phi_1(f)\d\mu_{N,D} - \Phi_1\bigg(
	\int_{\mathbb R^{2dN}} f\d\mu_{N,D}\bigg)
	\bigg) + \int_{\mathbb R^{2dN}}
	\Phi_2(f)\d\mu_{N,D}.
\end{equation*}
From the functional inequalities \eqref{functional 1} and \eqref{functional 2}, we derive the exponential decay
\begin{equation*}
	W_{\mu_{N,D}}(P_tf) \Le \exp\bigg(-\frac{t}{1+\frac{3(M_2^2/a^2+1)}{2\lambda_1}}\bigg) W_{\mu_{N,D}}(f) \Le \exp(-2\lambda_2 t) W_{\mu_{N,D}}(f),~~~~
	\forall t\Ge0,
\end{equation*}
which completes the proof.
\end{proof}
\red{The convergence rate $\lambda_2$ does not depend on the number of modes $N$ or the discretization size $D$. Nevertheless, Theorem~\ref{theorem: under ergodicity} requires the special Assumption (iii), i.e., $N\Le D$.}

\subsection{Extension of the the standard PIMD}
\red{In Theorem~\ref{theorem: under ergodicity}, we have proved the uniform-in-$N$ ergodicity of the underdamped Matsubara mode PIMD \eqref{under} under Assumptions (i)(ii)(iii). In particular, Assumption (iii) is satisfied when the discretization size $D = N$ or $D = \infty$. In these two cases, the Boltzmann distribution of the continuous loop is given by
\begin{equation*}
\begin{aligned}
(D=N)~:~  \mu_{N,N}(\xi) & \propto \exp\bigg\{
	-\frac12\sum_{k=0}^{N-1}
	(\omega_k^2+a) (|\xi_k|^2+|\eta_k|^2) - \beta_N\sum_{j=0}^{N-1} V^a
	\bigg(\sum_{k=0}^{N-1} \xi_k c_k(j\beta_D)\bigg)
	\bigg\}, \\
(D=\infty)~:~~~~ \mu_{N}(\xi) & \propto \exp\bigg\{
	-\frac12\sum_{k=0}^{N-1}
	(\omega_k^2+a) (|\xi_k|^2+|\eta_k|^2) - \int_0^\beta V^a
	\bigg(\sum_{k=0}^{N-1} \xi_k c_k(\tau)\bigg)\d\tau
	\bigg\}.
\end{aligned}
\end{equation*}
Theorem~\ref{theorem: under ergodicity} directly shows the Matsubara mode PIMD for sampling $\pi_{N,N}(\xi)$ and $\pi_N(\xi)$ has the uniform-in-$N$ convergence rate $\lambda_2 = \frac{a^2}{3M_2^2+5a^2} \exp(-4\beta M_1)$.

Given the ergodicity results of the Matsubara mode PIMD, we can conveniently extend the uniform-in-$N$ ergodicity to the standard PIMD. Let $N$ be an odd integer, we employ the $N$ normal mode coordinates $\{\xi_k\}_{k=0}^{N-1}$ to represent the $N$ beads of the ring polymer (see Appendix~\ref{appendix: normal}). The Boltzmann distribution of the $N$ beads is given in \eqref{under N}:
\begin{equation*}
\mu_{N,N}^{\std}(\xi,\eta) \propto \exp\bigg\{
-\frac12\sum_{k=0}^{N-1} (\omega_{k,N}^2+a) (|\xi_k|^2+|\eta_k|^2) - \beta_N
\sum_{j=0}^{N-1}V^a\bigg(\sum_{k=0}^{N-1} \xi_k c_k(j\beta_N)\bigg)
\bigg\},
\end{equation*}
and the standard PIMD for sampling $\mu_{N,N}^{\std}(\xi,\eta)$ is given by
\begin{equation}
\left\{
\begin{aligned}
\dot \xi_k & = \eta_k, \\
\dot \eta_k & = -\xi_k - \frac{
\beta_N}{\omega_{k,N}^2+a} \sum_{j=0}^{N-1} \nabla V^a(x_N(j\beta_N))c_k(j\beta_N)
 -\eta_k + \sqrt{\frac{2}{\omega_{k,N}^2+a}}\dot B_k,
\end{aligned}
\right.
\label{under 1}
\end{equation}
where we choose the damping rate $\gamma=1$. Similar to the quantity \eqref{entropy_2} in the Matsubara mode PIMD, we define the entropy-like quantity of $f(\xi,\eta)$ in $\mathbb R^{2dN}$:
\begin{equation*}
	W_{\mu_{N,N}^{\std}}(f) := \bigg(\frac{M_2^2}{a^2}+1\bigg) \Ent_{\mu_{N,N}^{\std}}(f) + \sum_{k=0}^{N-1} \frac1{\omega_{k,N}^2+a}
	\int_{\mathbb R^{2dN}} \frac{|\nabla_{\eta_k}f - \nabla_{\xi_k}f|^2 + |\nabla_{\eta_k}f|^2}{f}\d\mu_{N,N}^{\std}.
\end{equation*}
Utilizing the same approaches with Theorem~\ref{theorem: under ergodicity}, we can prove the uniform-in-$N$ ergodicity of the standard PIMD \eqref{under 1}.
\begin{theorem}
\label{theorem: standard ergodicity}
Let $N$ be an odd integer. Under Assumptions (i)(ii), let $(P_t)_{t\Ge0}$ be the Markov semigroup of the standard PIMD \eqref{under 1}, then for any positive smooth function $f(\xi,\eta)$ in $\mathbb R^{2dN}$,
\begin{equation*}
	W_{\mu_{N,N}^{\std}}(P_tf) \Le \exp(-2\lambda_2 t) W_{\mu_{N,N}^{\std}}(f),~~~~
	\forall t\Ge0,
\end{equation*}
where the convergence rate $\lambda_2 = \frac{a^2}{3M_2^2+5a^2} \exp(-4\beta M_1)$.
\end{theorem}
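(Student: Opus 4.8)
The plan is to transcribe the proof of Theorem~\ref{theorem: under ergodicity} almost verbatim, replacing the Matsubara frequencies $\omega_k$ by the normal mode frequencies $\omega_{k,N}$ throughout, and to check that every step survives this substitution. The three stages are unchanged: first I would establish a uniform-in-$N$ log-Sobolev inequality for $\mu_{N,N}^{\std}(\xi,\eta)$; then compute the generalized $\Gamma$ operators $\Gamma_{\Phi_1}(f)$ and $\Gamma_{\Phi_2}(f)$ for the same $\Phi_1,\Phi_2$; and finally verify the generalized curvature-dimension condition and invoke Theorem~\ref{theorem: generalized}.

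For the log-Sobolev inequality, the key remark is that the proof of Theorem~\ref{theorem: over ergodicity} never uses the specific values of the frequencies. The operators $\Gamma_1,\Gamma_2$ carry the weights $1/(\omega_{k,N}^2+a)$ in both, and the bound $\Gamma_2\Ge\Gamma_1$ follows solely from the convexity of $\mathcal V^c$ in the mode coordinates together with $\omega_{k,N}^2+a>0$; the bounded perturbation estimate $|\mathcal V^a_{N,N}-\mathcal V^c_{N,N}|\Le\beta M_1$ is likewise frequency-free. Hence the position marginal $\pi_{N,N}^{\std}(\xi)$ obeys the log-Sobolev inequality with the same constant $\lambda_1=\exp(-4\beta M_1)$, and tensorizing with the velocity Gaussian (whose precision is again $\omega_{k,N}^2+a$ and whose log-Sobolev constant is $1$) produces the uniform inequality for $\mu_{N,N}^{\std}$.

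For the $\Gamma$ calculus the commutators $[L,\nabla_{\xi_k}]$ and $[L,\nabla_{\eta_k}]$ retain their form with $\omega_{k,N}$ in place of $\omega_k$, so $\Gamma_{\Phi_1}(f)$ and the lower bound for $\Gamma_{\Phi_2}(f)$ are structurally identical. The quantity to control is again $v^{\T}\Sigma^{\std}v$ for $\Sigma^{\std}_{kl}=\nabla^2_{\xi_k\xi_l}\mathcal V^a_{N,N}\,/\,\sqrt{(\omega_{k,N}^2+a)(\omega_{l,N}^2+a)}$. Setting $w_k=v_k/\sqrt{\omega_{k,N}^2+a}$ and $u_j=\sum_k w_k c_k(j\beta_N)$ reduces $v^{\T}\Sigma^{\std}v$ to $\beta_N\sum_j u_j^{\T}\nabla^2V^a(x_N(j\beta_N))u_j$, whereupon Assumption~(ii) and $\omega_{k,N}^2+a\Ge a$ give $|v^{\T}\Sigma^{\std}v|\Le(M_2/a)|v|^2$, exactly as in Lemma~\ref{appendix: 2}.

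I expect the one genuine obstacle to be the discrete orthonormality $\sum_{j=0}^{N-1}c_k(j\beta_N)c_l(j\beta_N)=\delta_{kl}/\beta_N$ for $0\Le k,l\Le N-1$ that the above reduction requires. For the Matsubara dynamics this was supplied by Assumption~(iii); here $D=N$, so (iii) cannot be used, and instead I would invoke the normal mode construction of Appendix~\ref{appendix: normal}, where the oddness of $N$ is precisely what renders the $N$ functions $\{c_k\}_{k=0}^{N-1}$ (the constant together with the $(N-1)/2$ sine-cosine pairs) discretely orthonormal on the $N$ grid points $\{j\beta_N\}_{j=0}^{N-1}$; were $N$ even, the Nyquist sine mode would vanish at every grid point and the orthonormality would fail. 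This is the reason $N$ must be odd and the reason Assumption~(iii) is dropped. Once $\|\Sigma^{\std}\|\Le M_2/a$ is secured, the generalized curvature-dimension condition $\Gamma_{\Phi_2}(f)-\frac12\Phi_2(f)+(M_2^2/a^2+1)\Gamma_{\Phi_1}(f)\Ge0$ holds as before, and Theorem~\ref{theorem: generalized} yields the stated decay with the same rate $\lambda_2$.
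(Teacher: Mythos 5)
Your proposal is correct and follows essentially the same route as the paper, which proves Theorem~\ref{theorem: standard ergodicity} simply by repeating the argument of Theorem~\ref{theorem: under ergodicity} with the normal mode frequencies $\omega_{k,N}$ in place of $\omega_k$, the only delicate point being exactly the one you isolate: the discrete orthonormality $\beta_N\sum_{j=0}^{N-1}c_k(j\beta_N)c_l(j\beta_N)=\delta_{k,l}$ on the $N$-point grid, which holds because $N$ is odd (no vanishing Nyquist sine mode), as in the paper's Lemma~\ref{appendix: 2} and Appendix~\ref{appendix: normal}. One minor rephrasing: Assumption (iii) is not ``unusable'' here but rather automatic, since $D=N$ gives $N\Le D$ with equality, which is precisely why the paper omits it from the hypotheses.
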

\noindent
From Theorem~\ref{theorem: standard ergodicity}, we conclude that the standard PIMD has uniform-in-$N$ ergodicity.
\begin{remark}
The uniform-in-$N$ ergodicity of the standard PIMD \eqref{under 1} should also hold true when $N$ is an even integer, and it requires a slight modification on the normal mode transform \eqref{normal transform}.
\end{remark}

We note that Theorem~3.5 of \cite{pHMC} also provides a result of the dimension-free ergodicity of the PIMD, and the major difference is that \cite{pHMC} studies the preconditioned Hamiltonian Monte Carlo (pHMC) rather than the preconditioned Langevin dynamics, which is a more popular thermostat used in the PIMD \cite{simple}. Also, it is required in (3.10) that the duration parameter in the pHMC needs to be small enough.}
\section{Numerical tests}
\label{section: numerical result}
\subsection{Setup of parameters and the time average estimator}
\red{For several examples of quantum systems, we compute the quantum thermal average employing the Matsubara mode PIMD \eqref{under} and the standard PIMD \eqref{under 1}. For simplicity, we choose the preconditioning parameter $a=1$ and the damping rate $\gamma = 1$. Also, the discretization size $D$ equal to the number of modes $N$ is an odd integer.

In the Matsubara mode PIMD, the quantum thermal average $\avg{O(\hat q)}_\beta$ is approximated by th statistical average $\avg{O(\hat q)}_{\beta,N,N}$:
\begin{equation*}
	\avg{O(\hat q)}_\beta \approx
	\avg{O(\hat q)}_{\beta,N,N} :=
	\int_{\mathbb R^{dN}}
	\bigg[
	\frac1N\sum_{j=0}^{N-1}
	O\bigg(\sum_{k=0}^{N-1} \xi_k c_k(j\beta_N)\bigg)
	\bigg]\pi_{N,N}(\xi)\d\xi,
\end{equation*}
where $\pi_{N,N}(\xi)$ is the Boltzmann distribution of the continuous loop defined in \eqref{pi N}.
Similarly, in the standard PIMD we have the approximation
\begin{equation*}
	\avg{O(\hat q)}_\beta \approx
	\avg{O(\hat q)}_{\beta,N,N}^{\std} :=
	\int_{\mathbb R^{dN}}
	\bigg[
	\frac1N\sum_{j=0}^{N-1}
	O\bigg(\sum_{k=0}^{N-1} \xi_k c_k(j\beta_N)\bigg)
	\bigg]\pi_{N,N}^{\std}(\xi)\d\xi,
\end{equation*}
Finally, let $\xi_k(t)$ and $\eta_k(t)$ be the solution to the Matsubara mode PIMD \eqref{under} or the standard PIMD \eqref{under 1}, then the quantum thermal average is computed from the time average
\begin{equation*}
	A(\beta,N,T) := \frac1T\int_0^T \bigg[
		\frac1N\sum_{j=0}^{N-1}
		O\bigg(\sum_{k=0}^{N-1} \xi_k(t) c_k(j\beta_N)\bigg)
		\bigg],
\end{equation*}
where $T>0$ is the simulation time. The accuracy of the simulation can thus be characterized by the time average error $A(\beta,N,T) - \avg{O(\hat q)}_\beta$, which depends on the number of modes $N$ and the simulation time $T$.

For the discretization of the Langevin dynamics, we employ the standard BAOAB integrator \cite{md}, which is widely applied in the molecular dynamics \cite{md,md_1}.}
\subsection{1D model potential}
Let the potential function $V(q)$ and the observable function $O(q)$ be given by
\begin{equation}
	V(q) = \frac12q^2 + q\cos q,~~~~ O(q) = \sin\bigg(\frac{\pi}2q\bigg),~~~~q\in\mathbb R^1.
	\label{model: 1}
\end{equation}
The exact value of the quantum thermal average $\avg{O(\hat q)}_\beta$ is computed from the spectral method with the Gauss--Hermite quadrature.
In the numerical tests, fix the time step $\Delta t = 1/16$ and the simulation time $T = 5\times10^6$.
\subsubsection{Convergence of the time average}
We plot the time average error in computing the quantum thermal average $\avg{O(\hat q)}_\beta$ in Figure~\ref{figure: 1}. The inverse temperature $\beta = 1,2,4,8$, and the number of modes $N = 9,17,33,65,129$. The left and right columns show the result of the Matsubara mode PIMD and the standard PIMD, respectively.
\begin{center}
\includegraphics[width=0.48\textwidth]{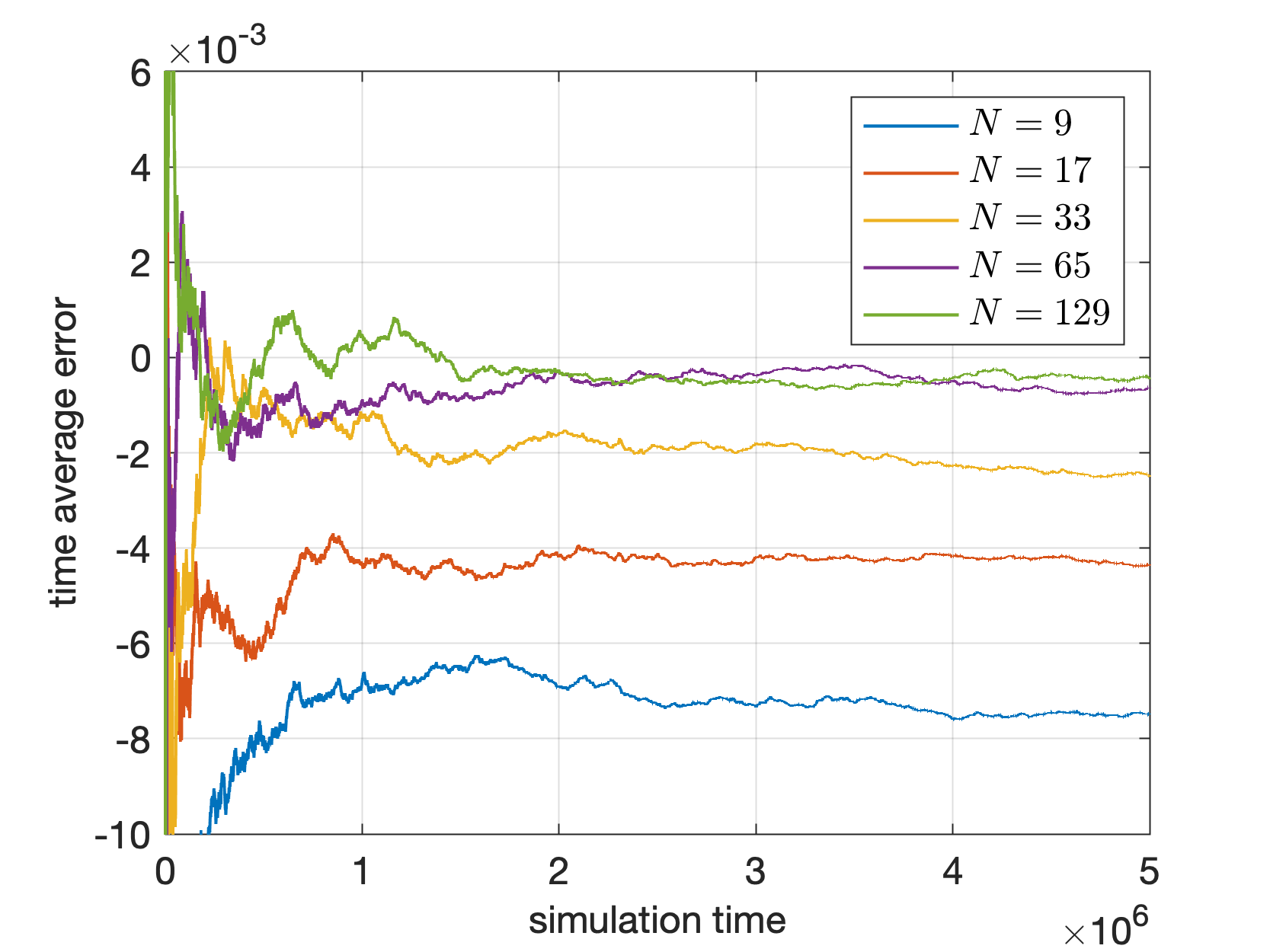}
\includegraphics[width=0.48\textwidth]{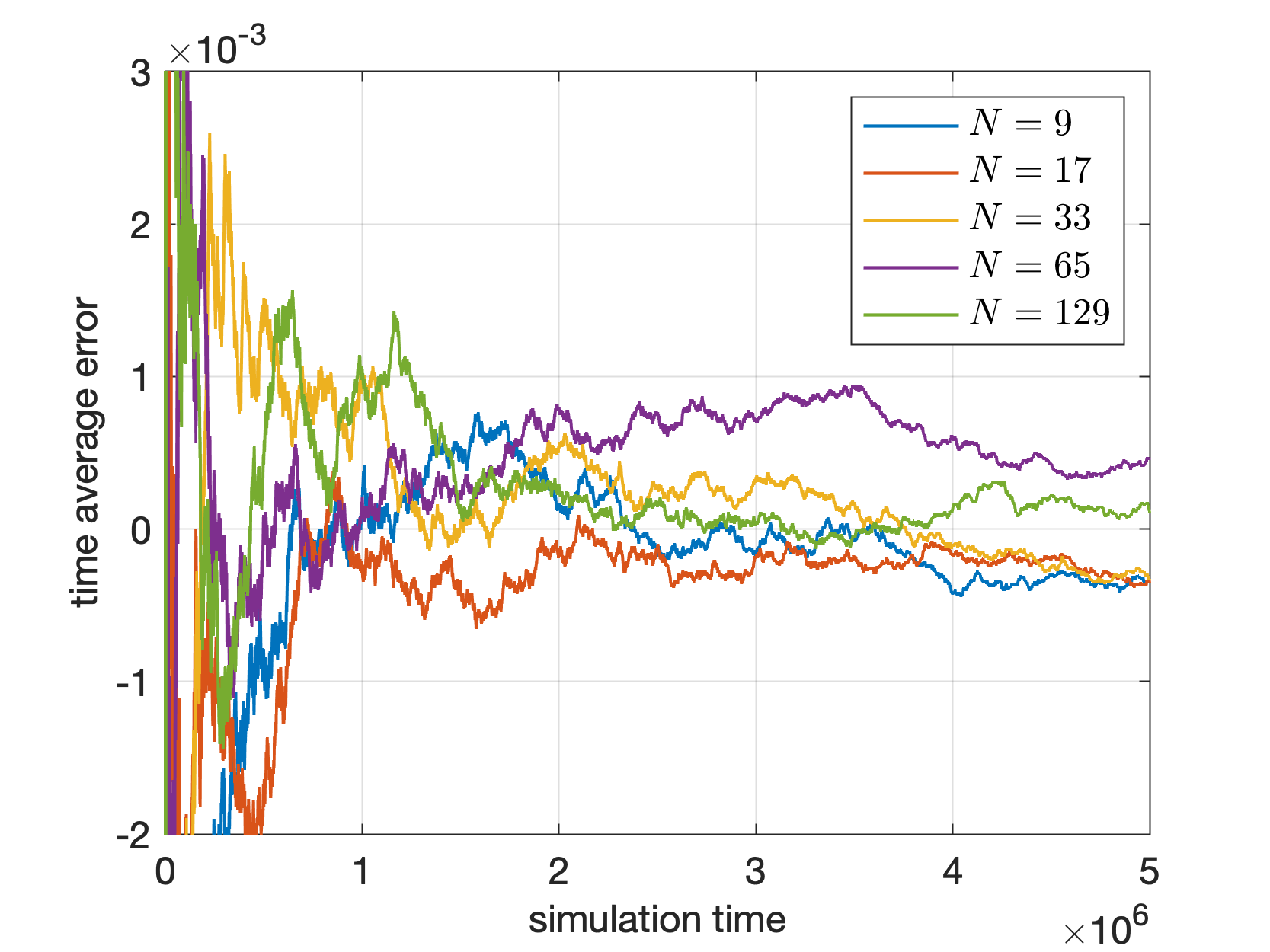} \\
\includegraphics[width=0.48\textwidth]{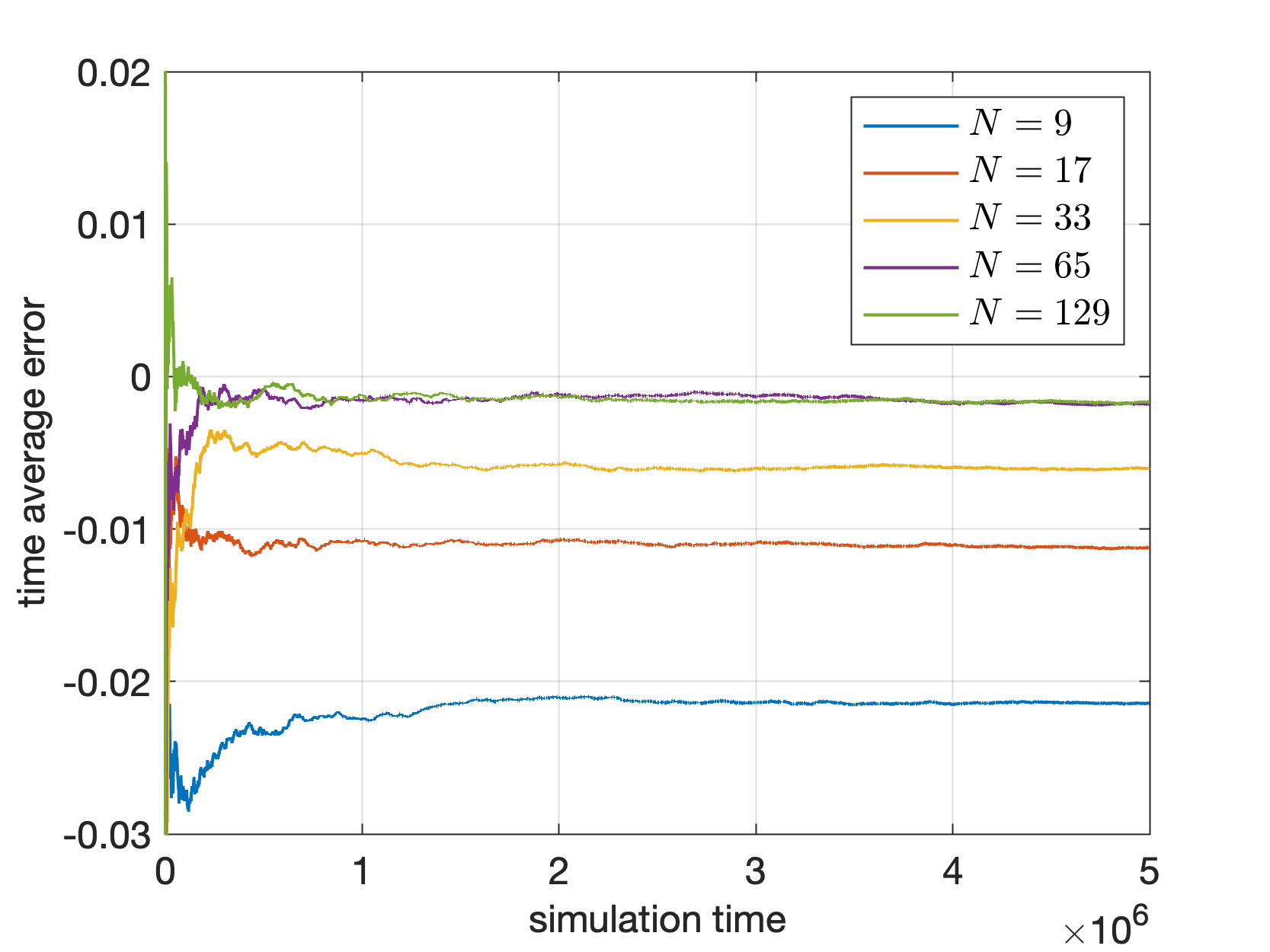}
\includegraphics[width=0.48\textwidth]{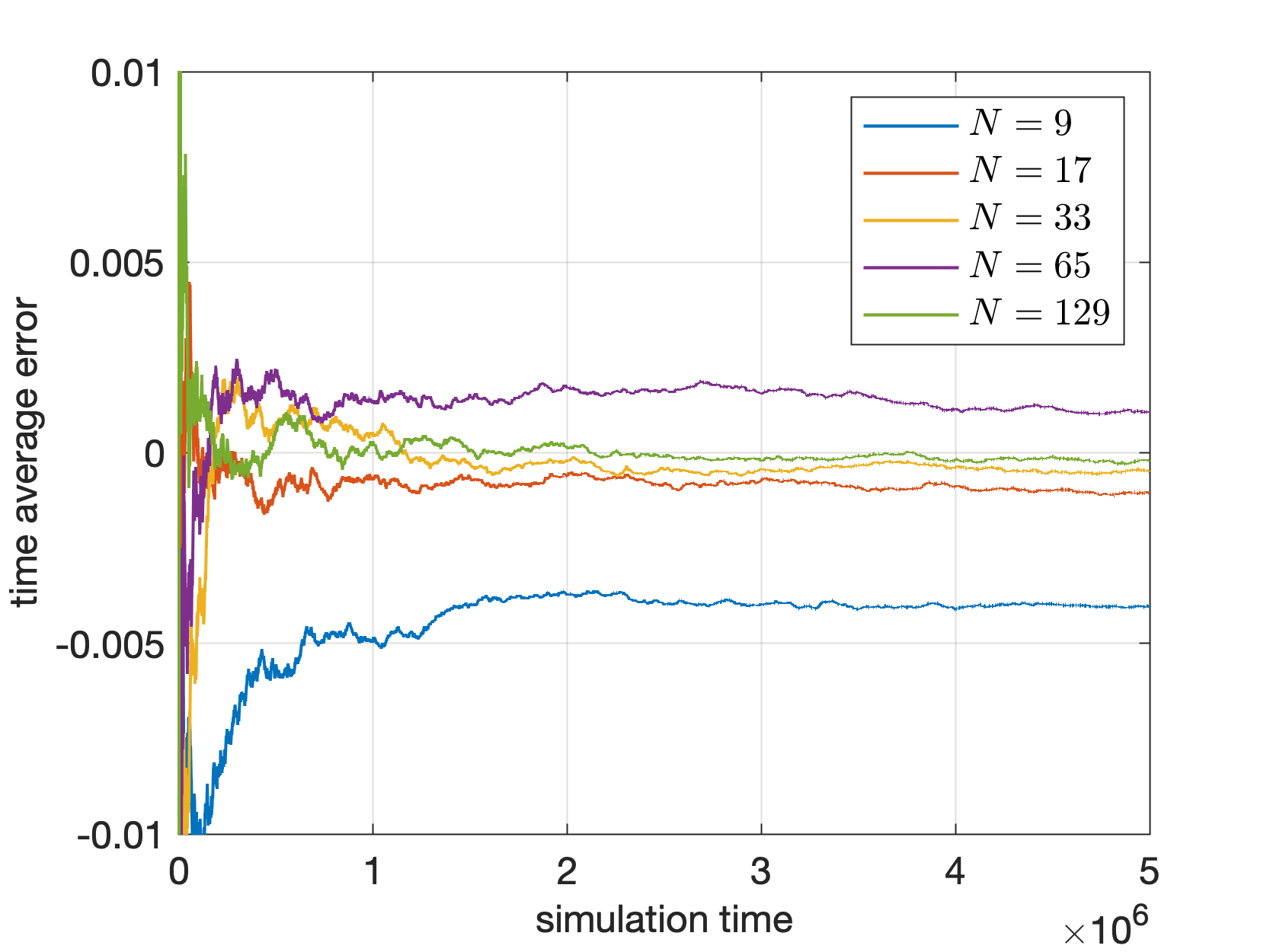} \\
\includegraphics[width=0.48\textwidth]{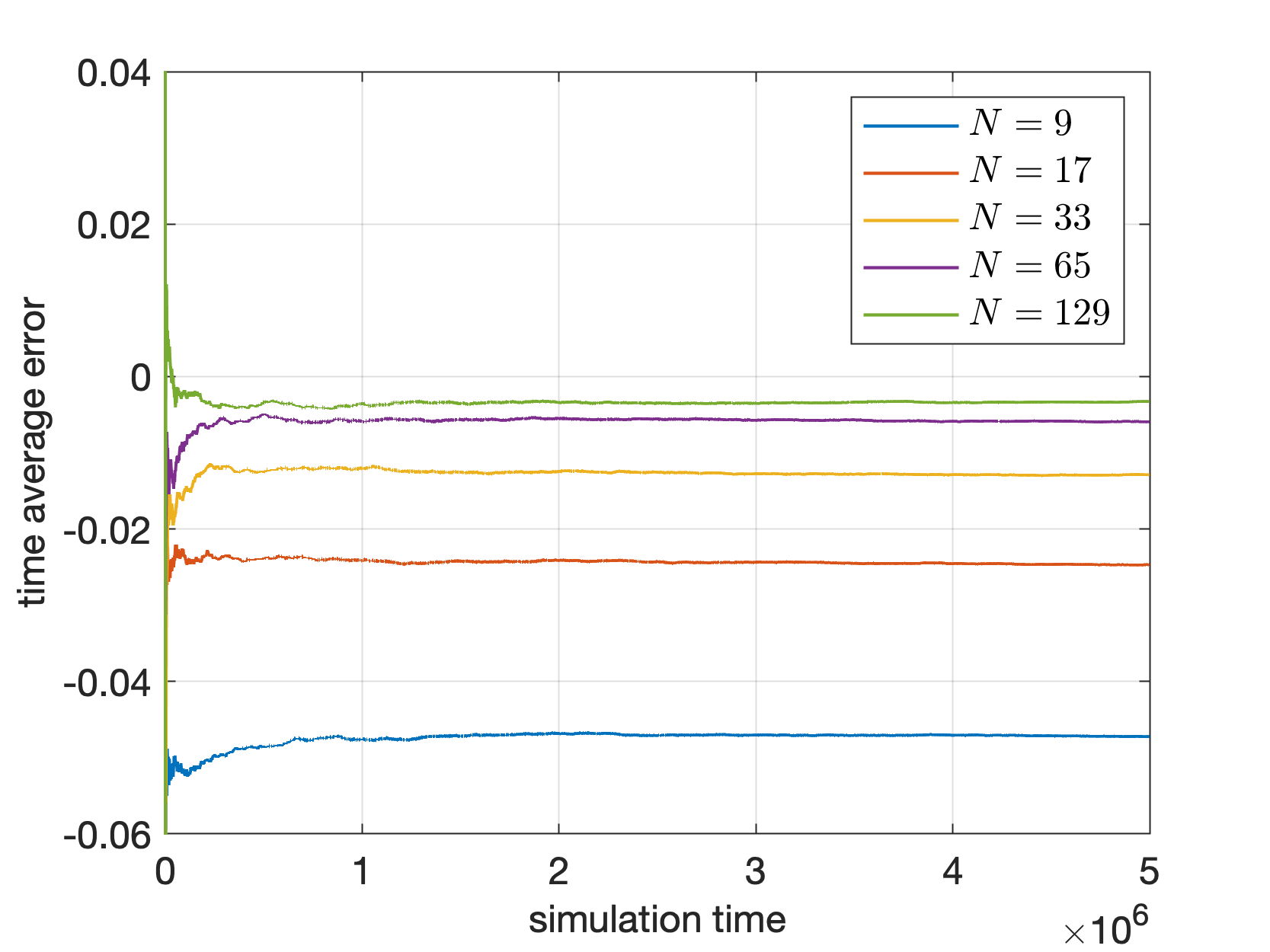}
\includegraphics[width=0.48\textwidth]{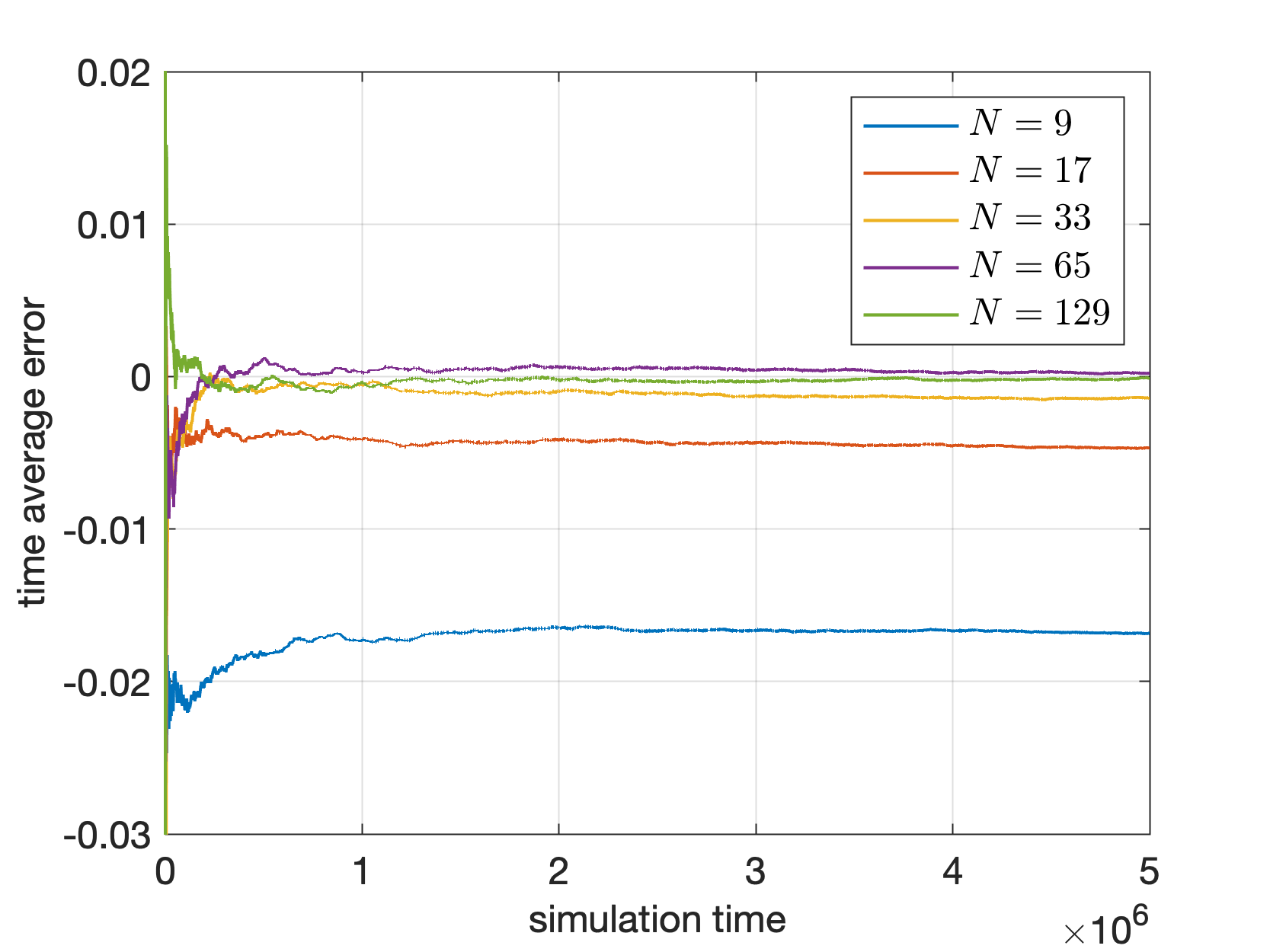}
\includegraphics[width=0.48\textwidth]{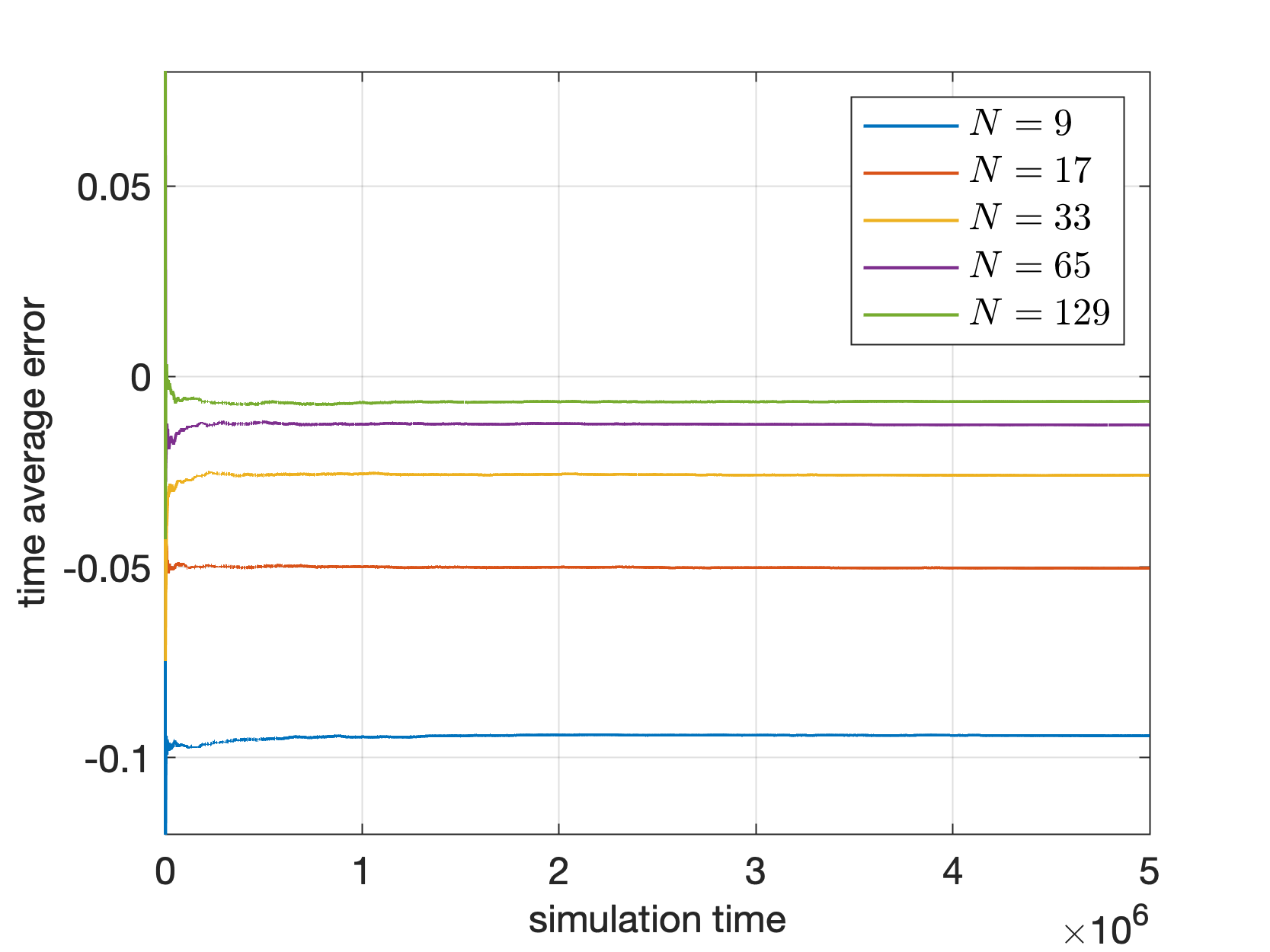}
\includegraphics[width=0.48\textwidth]{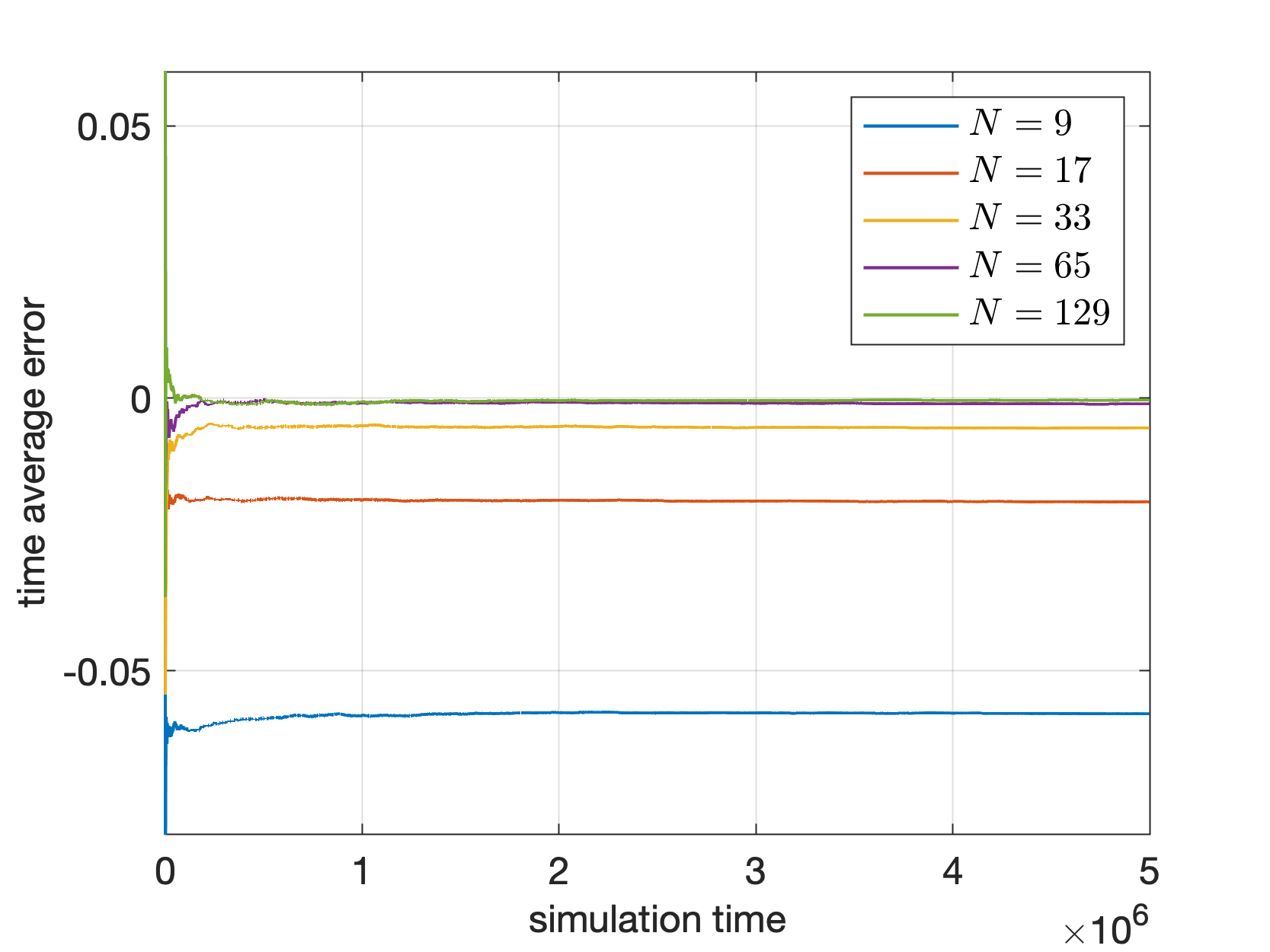}
\captionof{figure}{Time average error in computing the quantum thermal average for the 1D model potential \eqref{model: 1}. Left: Matsubara mode PIMD. Right: standard PIMD. Top to bottom: the inverse temperature $\beta = 1,2,4,8$.}
\label{figure: 1}
\end{center}
Figure~\ref{figure: 1} shows that the standard PIMD has a better accuracy than the Matsubara mode PIMD in all temperatures, and requires a smaller number of modes $N$ for convergence.

\subsubsection{Correlation function of mode coordinates}
The correlation function of the mode coordinates $\{\xi_k\}_{k=0}^{N-1}$ is computed from
\begin{equation*}
	C_k(\beta,N,\Delta T) := \frac{\avg{\xi_k(t)\xi_k(t+\Delta T)}}{\avg{\xi_k(t)\xi_k(t)}},~~~~
	k = 0,1,\cdots,N-1,
\end{equation*}
where $\avg{f(t)}:=\lim_{T\rightarrow\infty} T^{-1} \int_0^T f(t)\d t$ denotes the time average of the function $f$, and $\Delta t$ is the time interval of two instants recording the coordinate $\xi_k$. The exponential decay of $C_k(\beta,N,\Delta)$ is an important criterion of the geometric ergodicity.

We compute the correlation functions of the first five mode coordinates $\{\xi_k\}_{k=0}^4$, where the inverse temperature $\beta = 1,2,4,8$, and the number of modes $N = 9,17,33,65,129$. The correlation functions $C_k(\beta,N,\Delta T)$ are plotted as the function of $\Delta T$ in Figure~\ref{figure: 2}.
\begin{center}
\includegraphics[width=0.48\textwidth]{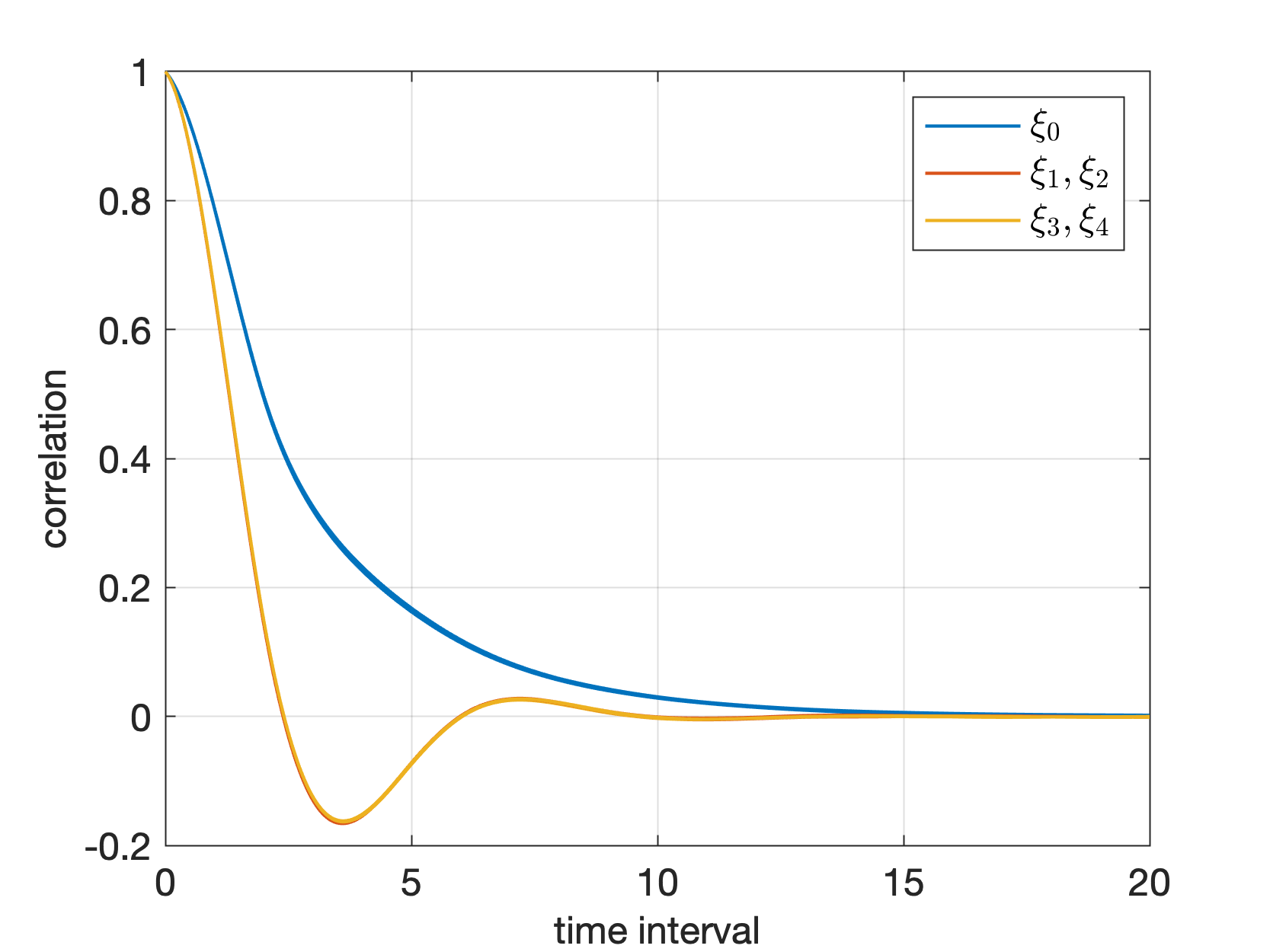}
\includegraphics[width=0.48\textwidth]{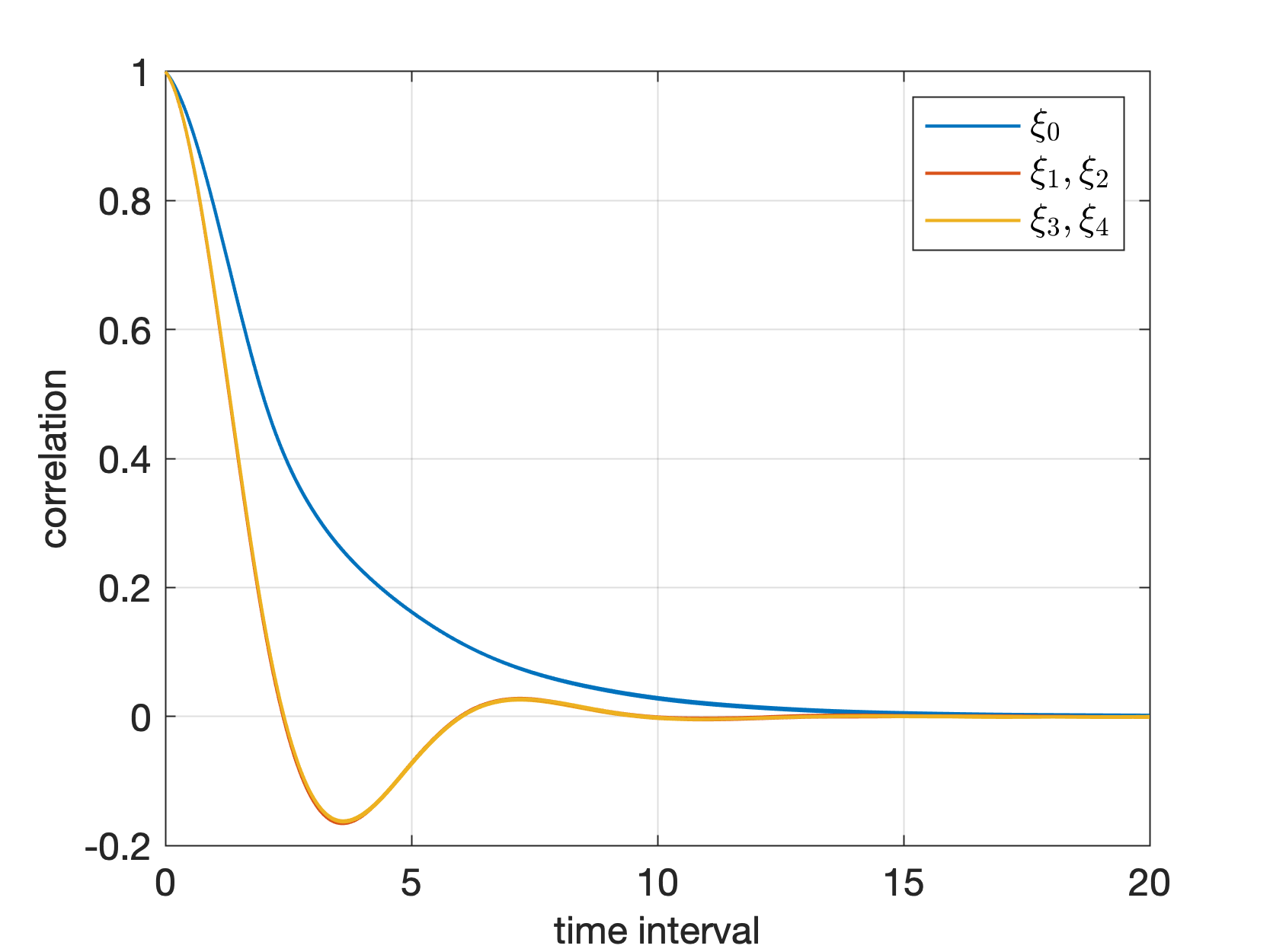} \\
\includegraphics[width=0.48\textwidth]{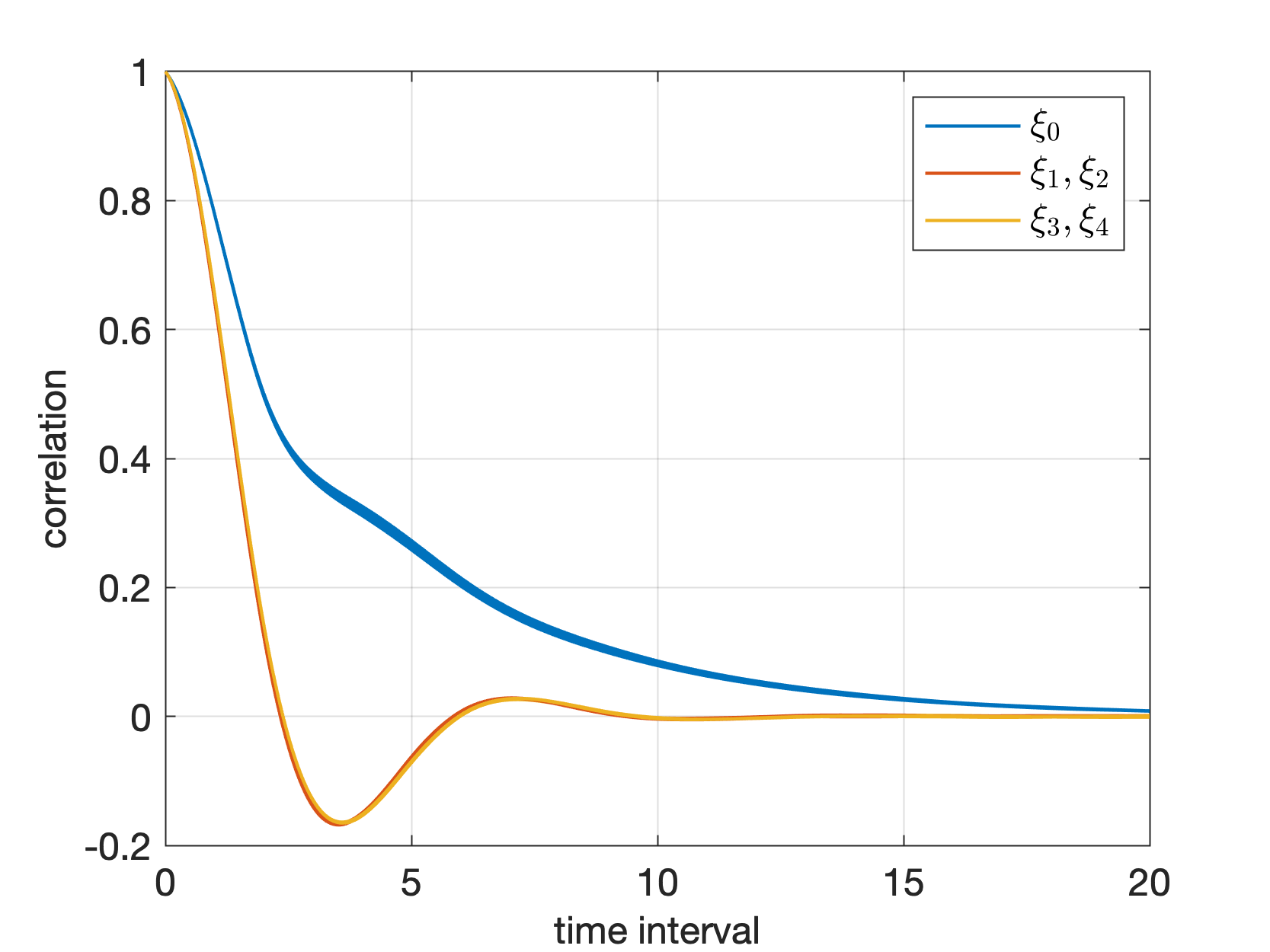}
\includegraphics[width=0.48\textwidth]{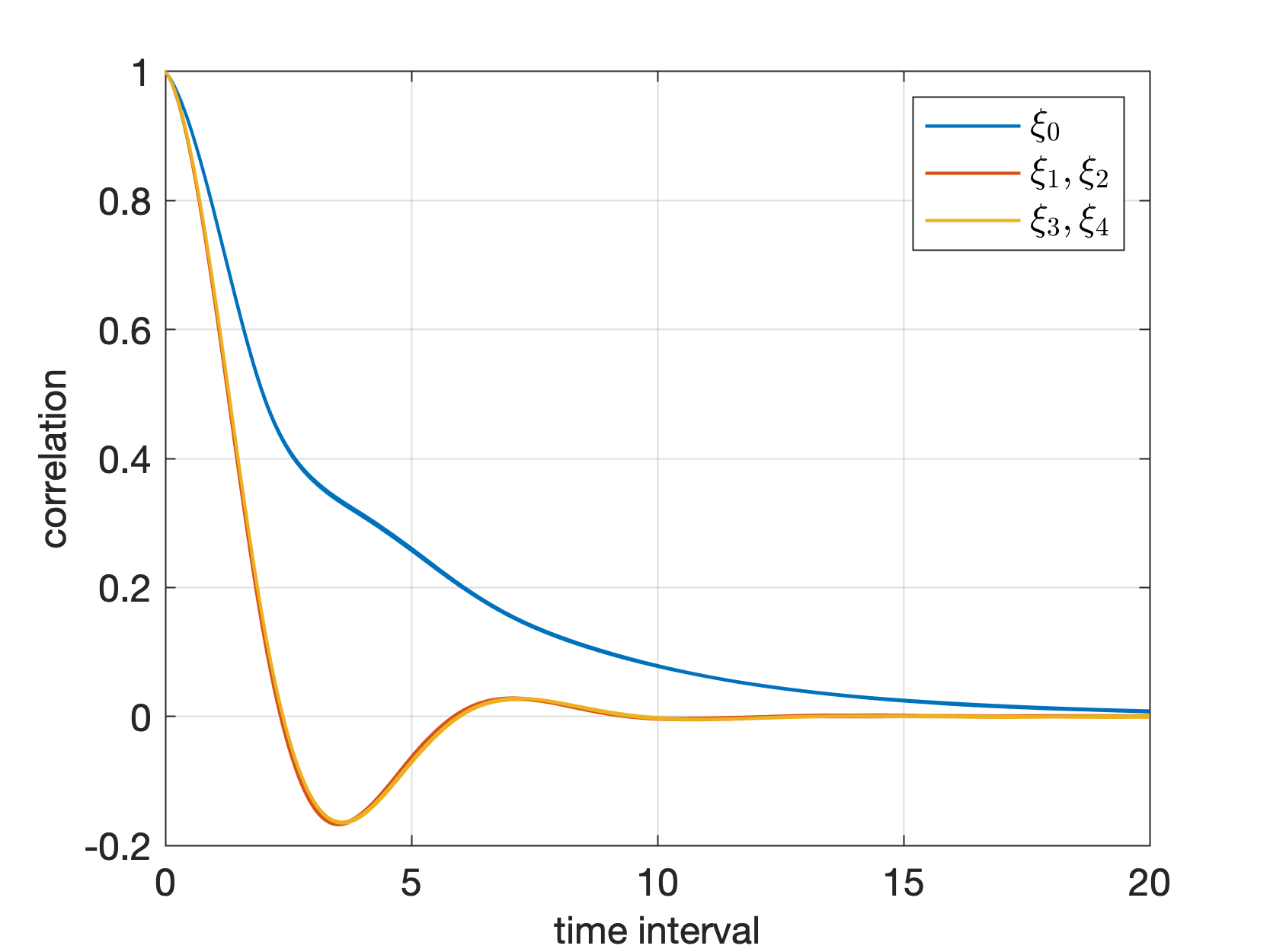} \\
\includegraphics[width=0.48\textwidth]{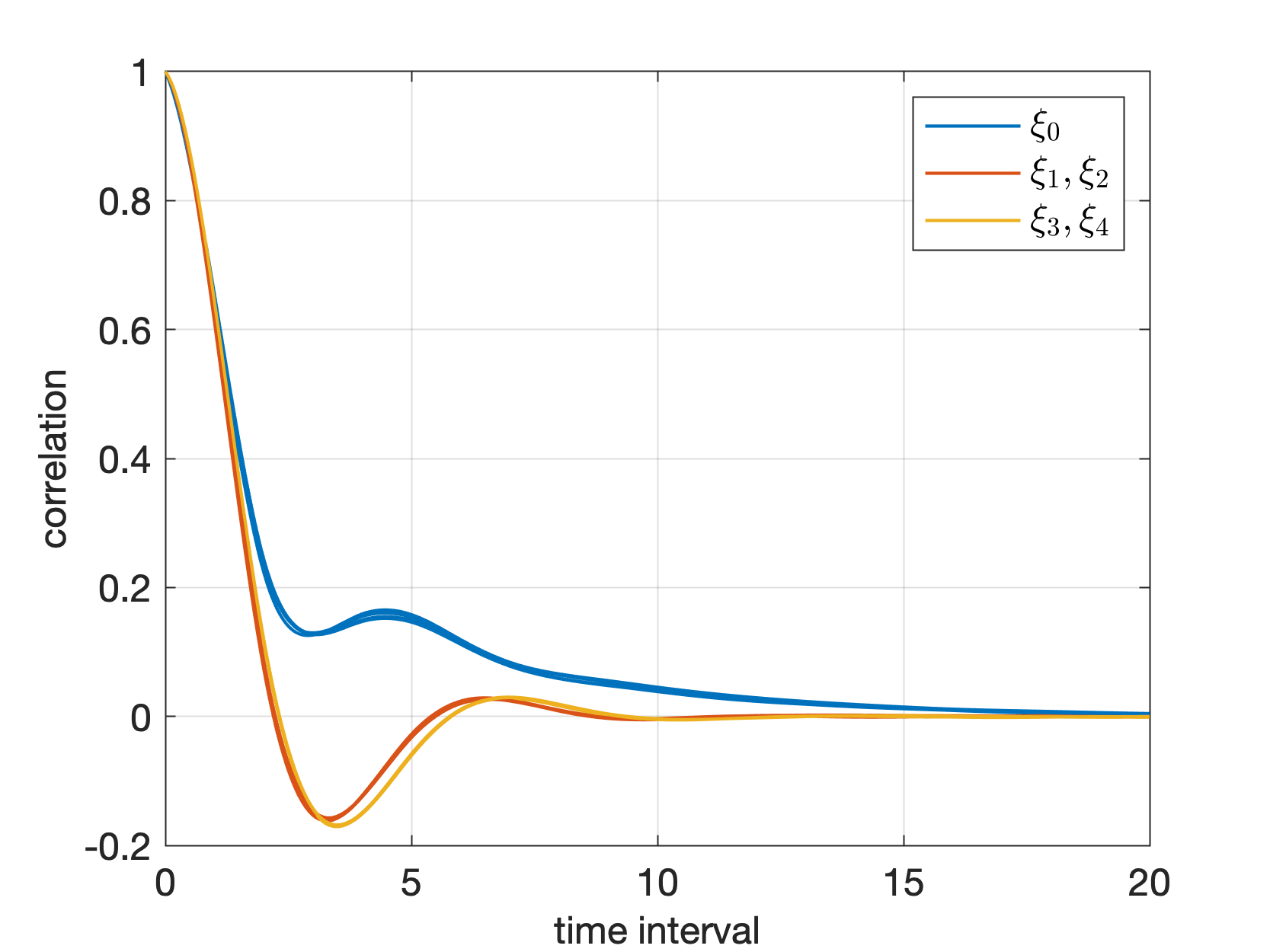}
\includegraphics[width=0.48\textwidth]{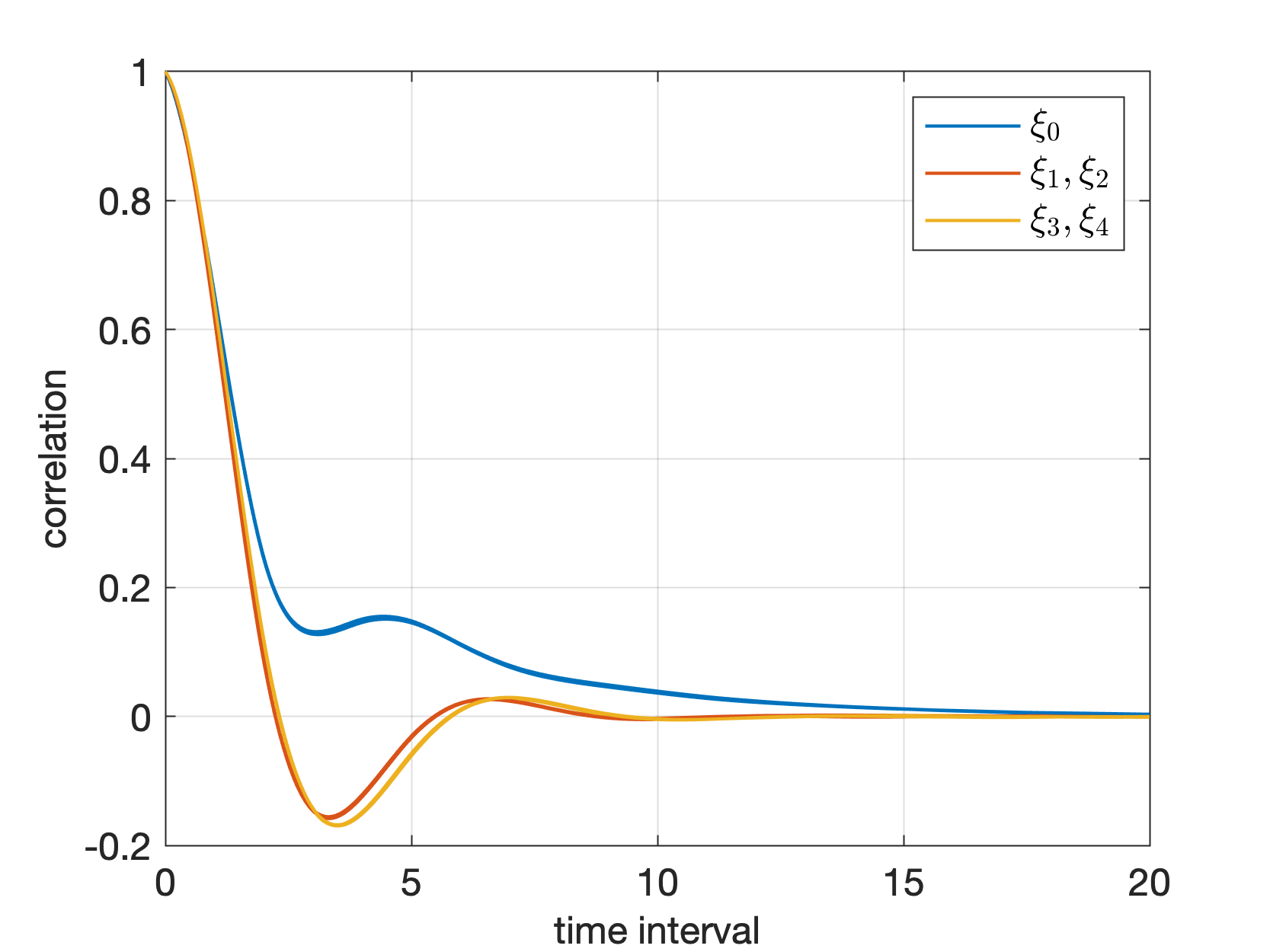} \\
\includegraphics[width=0.48\textwidth]{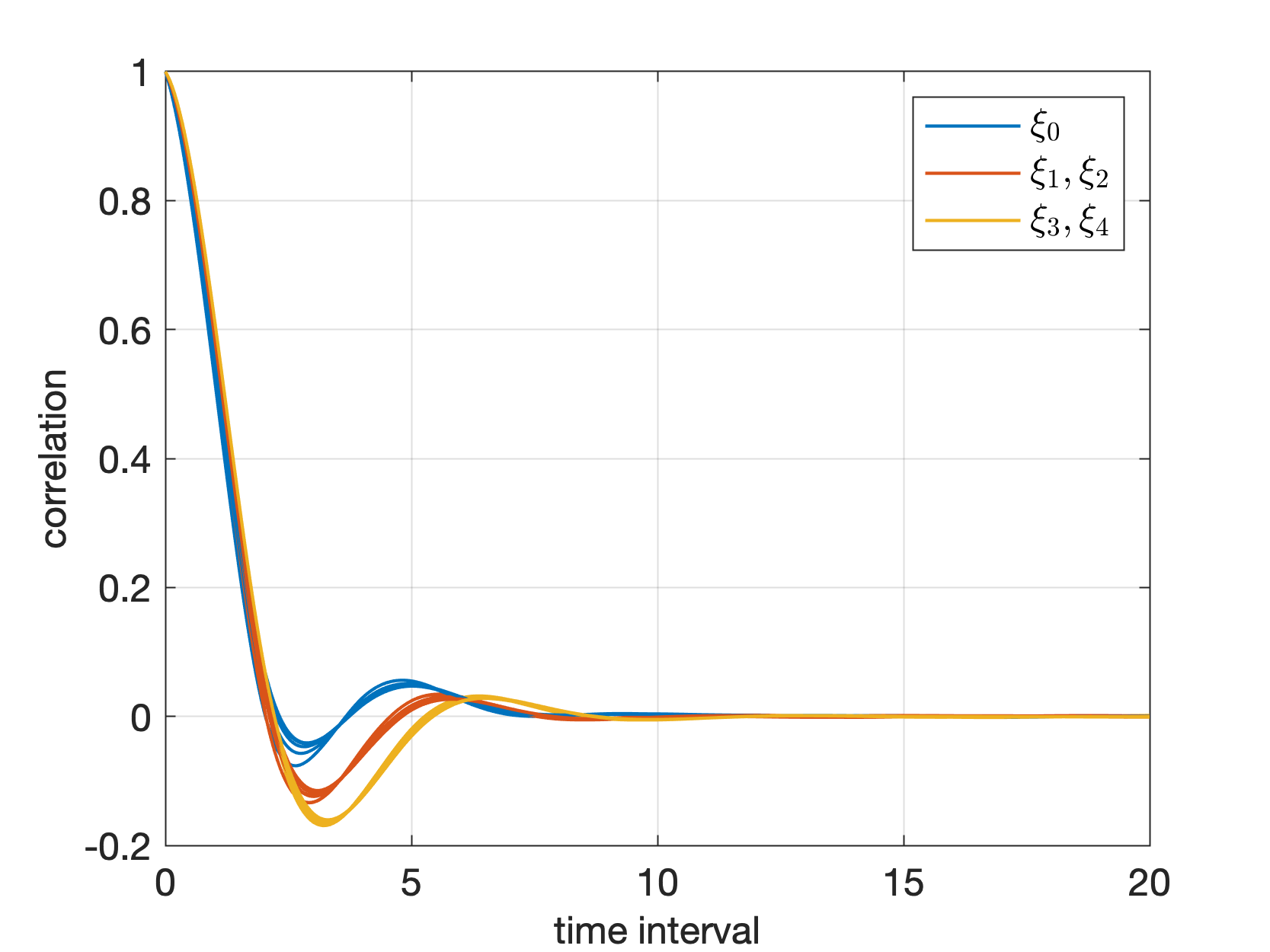}
\includegraphics[width=0.48\textwidth]{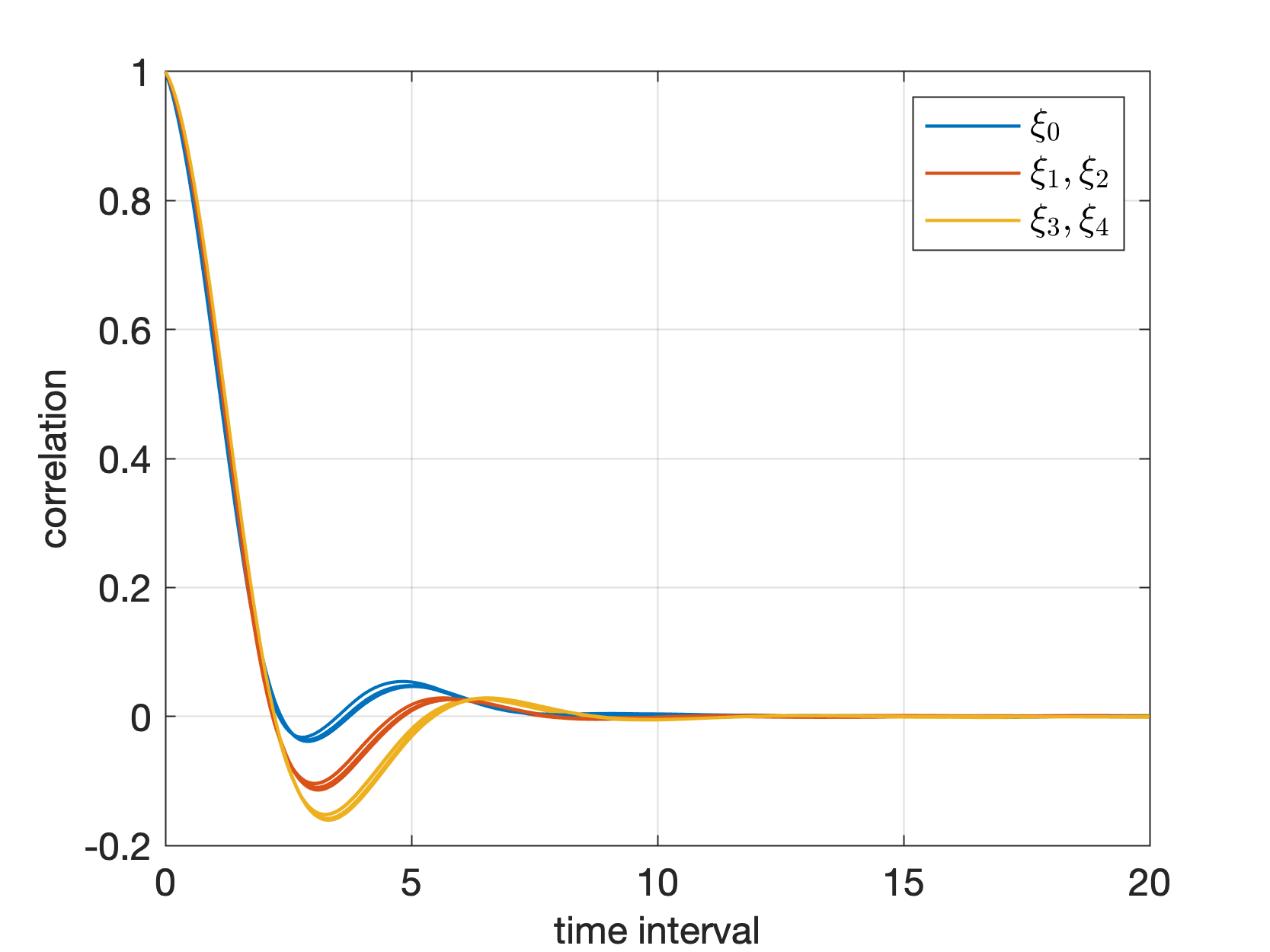}
\captionof{figure}{Correlation functions in the numerical simulation of the 1D potential \ref{model: 1}. Left: Matsubara mode PIMD. Right: standard PIMD. Top to bottom: $\beta = 1,2,4,8$. The centroid mode $\xi_0$ is colored in blue, $\xi_1,\xi_2$ are colored in red, and $\xi_3,\xi_4$ are colored in yellow.}
\label{figure: 2}
\end{center}
The coincidence of the correlation functions for various $N$ shows that both the Matsubara mode PIMD and the standard PIMD have uniform-in-$N$ ergodicity. Meanwhile, the separation of the correlation functions for various $k$ verifies the convergence rates on the different modes can be divergent, \red{and in low temperatures ($\beta$ is large), high frequency modes tend to have a longer correlation time.}
\subsection{3D spherical potential}
Consider the 3D spherical potential
\begin{equation*}
	V(q) = \frac12|q|^2+\frac1{\sqrt{|q|^2+0.2^2}},~~~~|q| = \sqrt{q_1^2+q_2^2+q_3^2},
\end{equation*}
and we aim to capture the probability distribution of $|q|$, namely, the Euclidean distance from the origin in $\mathbb R^3$. Utilizing the density operator $e^{-\beta\hat H}$, the distribution of $|q|$ can be expressed via the density function
\begin{equation*}
	\rho(r) = \frac1{\mathcal Z}\int_{\mathbb R^3} \mel{q}{e^{-\beta\hat H}}{q}\delta(|q|-r) \d q,~~~~r\Ge0,~~~~
	\mathcal Z = \Tr\big[e^{-\beta\hat H}\big].
\end{equation*}
The distribution $\rho(r)$ is able to characterize the radial observable functions. Choose inverse temperature $\beta = 4$, the time step $\Delta t = 1/32$, and the simualtion time $T = 5\times10^6$. In Figure~\ref{figure: 3}, we plot the density of $|q|$ while the number of modes $N$ varies in $3,5,9,17,33$.
\begin{center}
\includegraphics[width=0.48\textwidth]{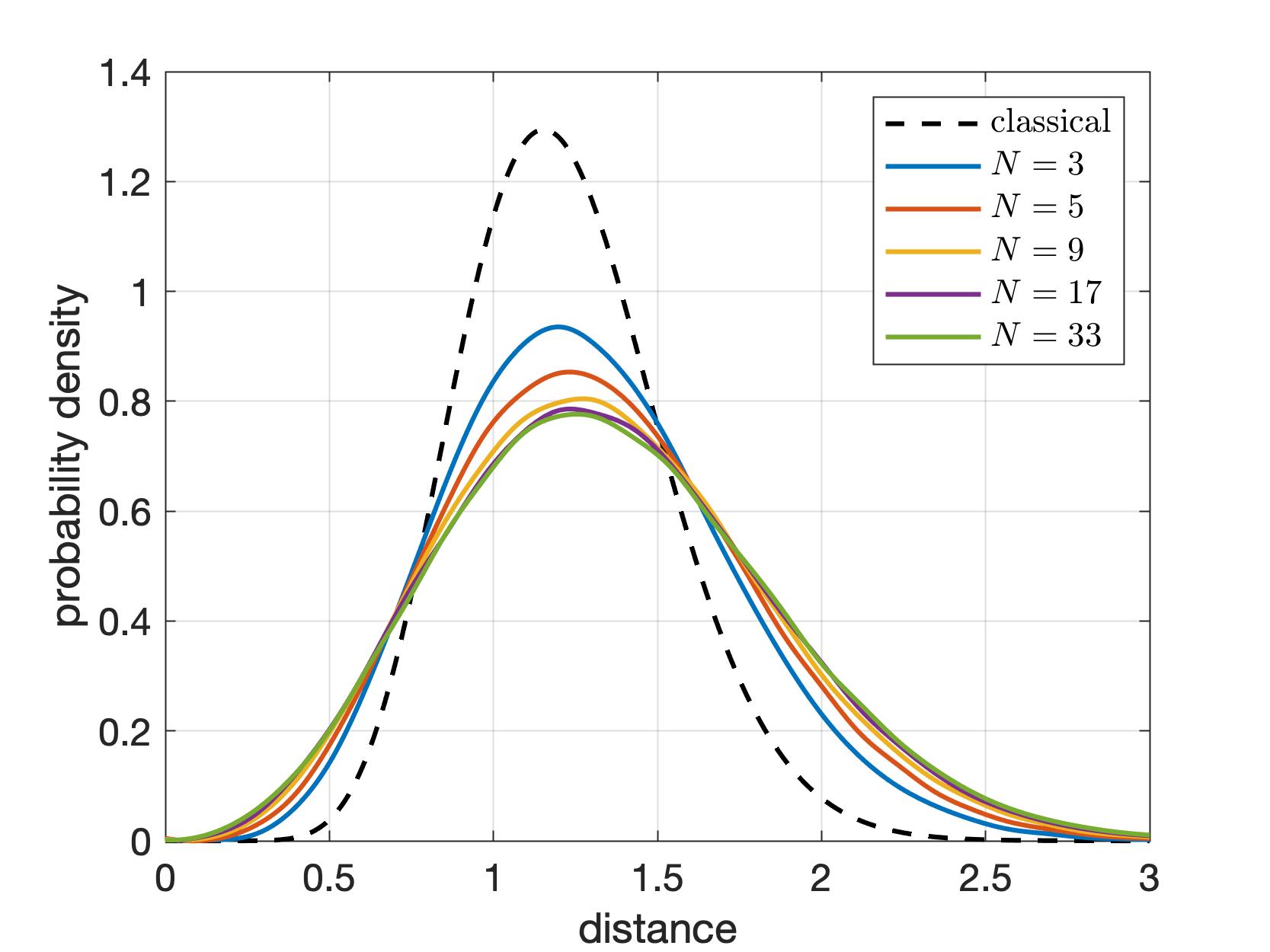}
\includegraphics[width=0.48\textwidth]{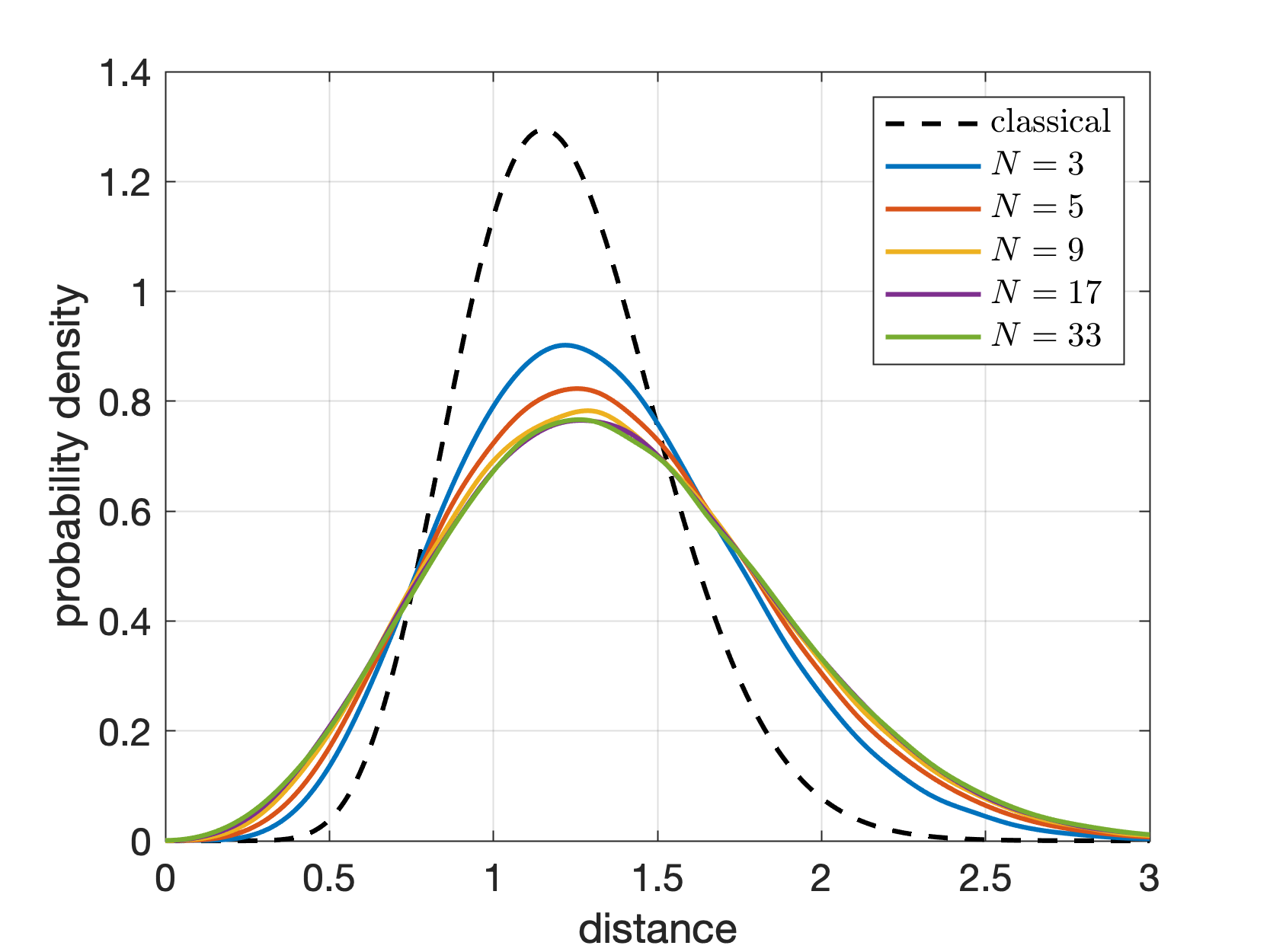}
\includegraphics[width=0.48\textwidth]{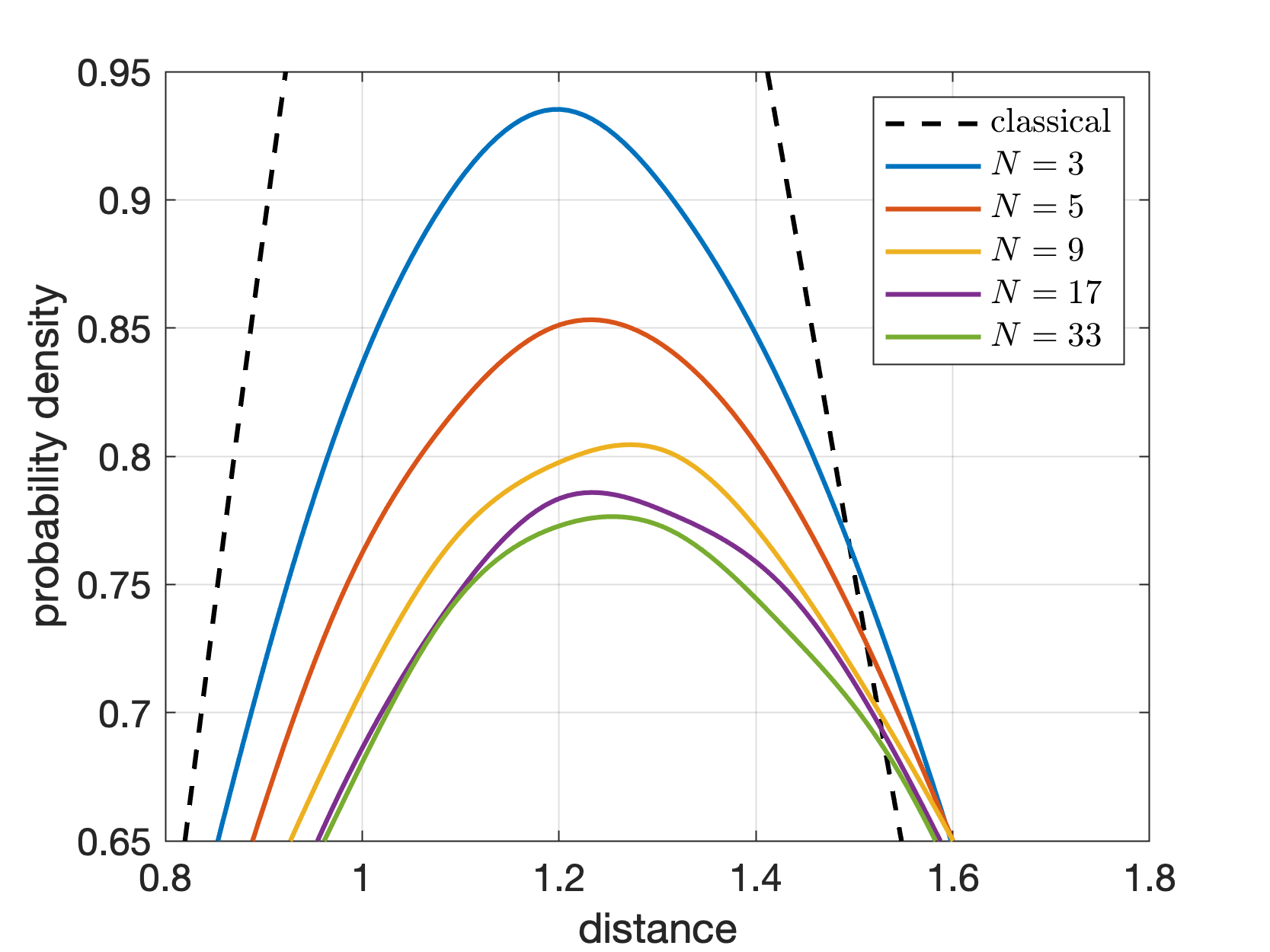}
\includegraphics[width=0.48\textwidth]{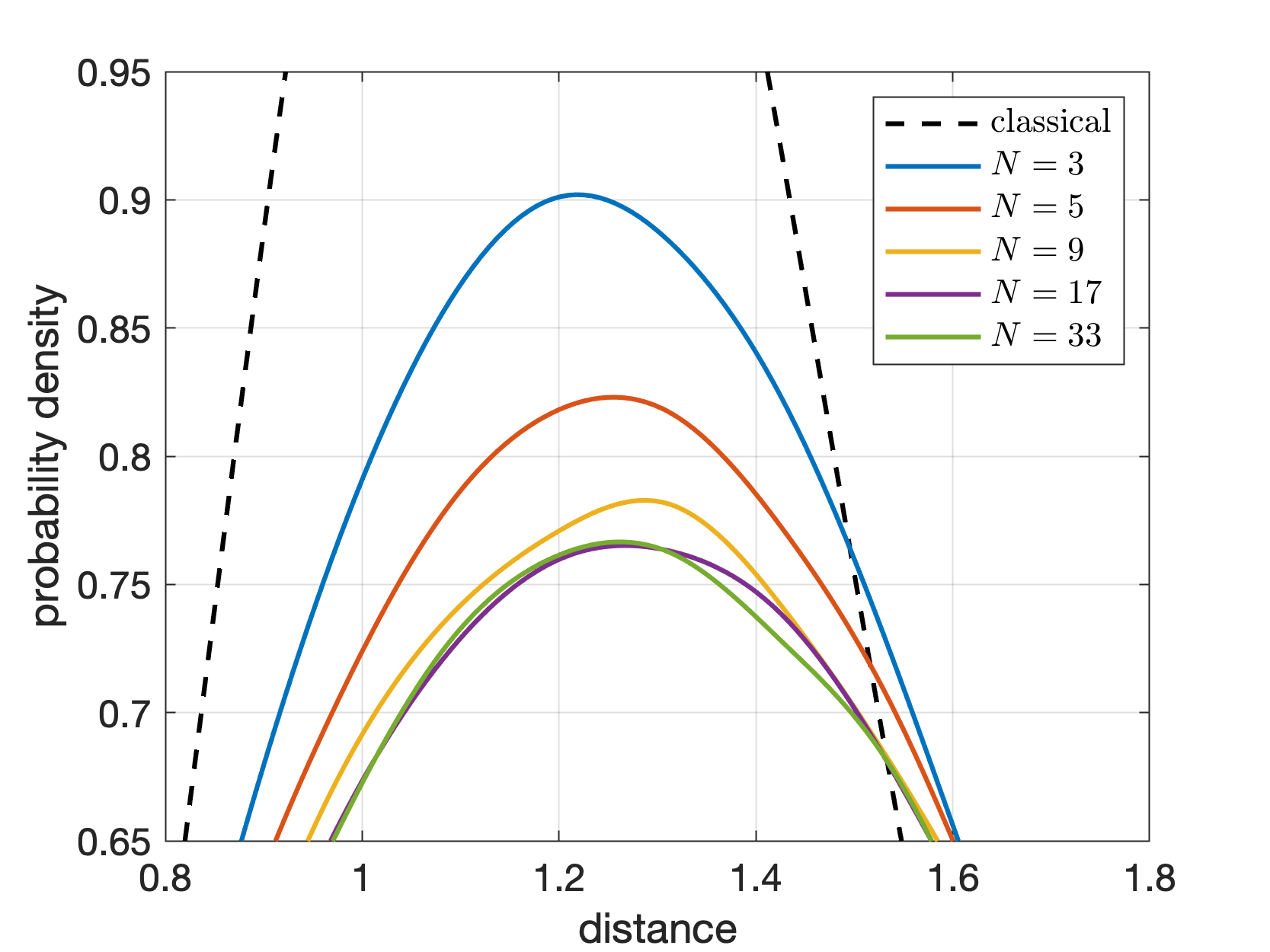}
\captionof{figure}{Probability density of $|q|$ in the simulation of the PIMD. Left: Matsubara mode PIMD. Right: standard PIMD. The top and bottom graphs use different scales.}
\label{figure: 3}
\end{center}
Figure~\ref{figure: 3} shows that as the number of modes $N$ increases, the density function shifts from the classical distribution (black dashed curve) to the quantum limit. Both the Matsubara mode PIMD and the standard PIMD correctly computes the correct density function $\rho(r)$, however the standard PIMD has better accuracy than the Matsubara mode PIMD when the number of modes $N$ is small.

\section{Conclusion}
\red{We prove the uniform-in-$N$ ergodicity of the Matsubara mode PIMD and the standard PIMD for a general potential function $V(q)$, i.e., the convergence towards the invariant distribution does not depend on the number of modes (for the Matsubara mode PIMD) or the number of beads (for the standard PIMD).}

\section*{Acknowledgments}
The work of Z. Zhou is partially supported by the National Key R\&D Program of China
(Project No. 2020YFA0712000, 2021YFA1001200), and the National Natural Science Foundation
of China (Grant No. 12031013, 12171013).

The authors would like to thank Haitao Wang (SJTU), Xin Chen (SJTU), Wei Liu (WHU) for the helpful discussions. The numerical tests are supported by High-performance Computing Platform of Peking University.

\begin{appendices}

\section{Additional proofs for Section \ref{section: convergence analysis}}
\label{appendix: proof convergence}
\begin{lemma}
\label{appendix: 1}
Suppose $V^c(q)$ is convex in $\mathbb R^d$, then for any coordinates $\xi = \{\xi_k\}_{k=0}^{N-1}$ in $\mathbb R^{dN}$,
\begin{equation*}
	\mathcal V_{N,D}^c(\xi) = \beta_D \sum_{j=0}^{D-1} V^c(x_N(j\beta_D))
\end{equation*}
is convex in $\mathbb R^{dN}$, where $x_N(\tau) = \sum_{k=0}^{N-1} \xi_k c_k(\tau)$ is the continuous loop with coordinates $\{\xi_k\}_{k=0}^{N-1}$.
\end{lemma}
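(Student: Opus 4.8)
The plan is to exploit the fact that convexity is preserved under composition with affine maps and under nonnegative linear combinations. The crucial observation is that for each fixed discretization node $\tau_j = j\beta_D$, the evaluation of the truncated loop
\begin{equation*}
	x_N(j\beta_D) = \sum_{k=0}^{N-1} \xi_k c_k(j\beta_D)
\end{equation*}
is a \emph{linear} function of the mode coordinates $\xi = \{\xi_k\}_{k=0}^{N-1} \in \mathbb{R}^{dN}$, since each scalar $c_k(j\beta_D)$ is fixed. Denoting this linear map by $L_j : \mathbb{R}^{dN} \to \mathbb{R}^d$, we may rewrite $\mathcal V_{N,D}^c(\xi) = \beta_D \sum_{j=0}^{D-1} V^c(L_j \xi)$, so the entire statement reduces to a structural property of this representation.

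First I would recall the elementary fact that if $V^c : \mathbb{R}^d \to \mathbb{R}$ is convex and $L : \mathbb{R}^{dN} \to \mathbb{R}^d$ is linear, then $\xi \mapsto V^c(L\xi)$ is convex on $\mathbb{R}^{dN}$: for $\xi,\xi' \in \mathbb{R}^{dN}$ and $t\in[0,1]$, linearity gives $L(t\xi + (1-t)\xi') = tL\xi + (1-t)L\xi'$, and convexity of $V^c$ then yields
\begin{equation*}
	V^c\big(L(t\xi + (1-t)\xi')\big) \Le t\, V^c(L\xi) + (1-t)\, V^c(L\xi').
\end{equation*}
Applying this with $L = L_j$ shows that each summand $V^c(L_j \xi)$ is convex in $\xi$. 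Finally, since $\beta_D > 0$ and a nonnegative linear combination of convex functions is again convex, the sum $\beta_D \sum_{j=0}^{D-1} V^c(L_j \xi) = \mathcal V_{N,D}^c(\xi)$ is convex on $\mathbb{R}^{dN}$, which is exactly the assertion.

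I do not expect any serious obstacle here, as this is a standard convexity argument; the only point demanding mild care is keeping track of the block structure $\mathbb{R}^{dN} = (\mathbb{R}^d)^N$ when verifying that $L_j$ is genuinely linear in the full coordinate vector, which is immediate from the definition. Should one prefer the Hessian-level statement used elsewhere in the analysis (e.g.\ in Lemma~\ref{appendix: 2}), an alternative under the twice-differentiability of $V^c$ is to note $\nabla^2_\xi \big[V^c(L_j \xi)\big] = L_j^{\T}\, \nabla^2 V^c(L_j \xi)\, L_j \Succ O_{dN}$, so that $\nabla^2 \mathcal V_{N,D}^c(\xi)$ is a $\beta_D$-weighted sum of positive semidefinite matrices, hence positive semidefinite.
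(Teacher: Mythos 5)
Your proposal is correct, and your primary argument takes a genuinely different (and more elementary) route than the paper. The paper works entirely at the Hessian level: it computes $\nabla_{\xi_k\xi_l}^2 \mathcal V_{N,D}^c(\xi) = \beta_D \sum_{j=0}^{D-1} \nabla^2 V^c(x_N(j\beta_D))\, c_k(j\beta_D) c_l(j\beta_D)$, tests the quadratic form against an arbitrary block vector $\{q_k\}_{k=0}^{N-1}$, and groups terms via $v_j = \sum_{k=0}^{N-1} q_k c_k(j\beta_D)$ so that the form becomes $\beta_D \sum_{j} v_j \cdot \nabla^2 V^c(x_N(j\beta_D)) \cdot v_j \Ge 0$ --- note that this $v_j$ is exactly your $L_j q$, so the paper's computation is precisely your closing ``alternative'' remark $\nabla^2_\xi[V^c(L_j\xi)] = L_j^{\T} \nabla^2 V^c(L_j\xi) L_j \Succ O_{dN}$, just written out componentwise. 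Your main argument instead uses only the definition of convexity together with the facts that precomposition with a linear map and nonnegative linear combinations preserve convexity; this buys generality (no twice-differentiability of $V^c$ is needed, whereas the paper's Assumption (i) phrases convexity via $\nabla^2 V^c \Succ O_d$) and avoids any computation. What the paper's route buys in exchange is the explicit Hessian formula itself, which is not wasted effort: the same expression is reused immediately in Lemma~\ref{appendix: 2} to bound the matrix $\Sigma$, so the two lemmas share their key computation. Either proof is complete and rigorous for the stated lemma.
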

\begin{proof}
The Hessian matrix of $V_{N,D}^c(\xi)$ is given by
\begin{equation*}
	\nabla_{\xi_k\xi_l}^2 \mathcal V_{N,D}^c(\xi) = \beta_D \sum_{j=0}^{D-1} \nabla^2 V^c(x_N(j\beta_D))
	c_k(j\beta_D) c_l(j\beta_D) \in \mathbb R^{d\times d},~~~~k,l=0,1,\cdots,N-1.
\end{equation*}
To prove that $V_{N,D}^c(\xi)$ is convex, we only need to show for any constants $\{q_k\}_{k=0}^{N-1}$,
\begin{equation*}
	\mathcal S(q,q) := \sum_{k,j=0}^{N-1} q_k\cdot \nabla_{\xi_k\xi_l}^2 \mathcal V_{N,D}^c(\xi) \cdot q_l \Ge 0.
\end{equation*}
By direct calculation, we have
\begin{equation*}
	\mathcal S(q,q) = \beta_D \sum_{j=0}^{D-1} \bigg[\bigg(\sum_{k=0}^{N-1} q_k c_k(j\beta_D)\bigg) \cdot
\nabla^2 V^c(x_N(j\beta_D))  \cdot \bigg(\sum_{k=0}^{N-1} q_l c_l(j\beta_D)\bigg)\bigg].
\end{equation*}
Introduce the constants $v_j = \sum_{k=0}^{N-1} q_k c_k(j\beta_D)$ in $\mathbb R^d$ for $j = 0,1,\cdots,D-1$, then we can write
\begin{equation*}
	\mathcal S(q,q) = \beta_D \sum_{j=0}^{D-1} v_j \cdot \nabla^2 V^c(x_N(j\beta_D)) \cdot v_j.
\end{equation*}
Since each $\nabla^2 V^c(x_N(j\beta_D)) \in \mathbb R^{d\times d}$ is positive semidefinite, we conclude $\mathcal S(q,q) \Ge 0$.
\end{proof}

\begin{lemma}
\label{appendix: 2}
Let positive integers $N,D$ satisfy $N\Le D$, and
the potential $V^a(q)$ satisfies $-M_2 I_d\Prec\nabla^2 V^a(q) \Prec M_2I_d$ in $\mathbb R^d$. For any mode coordinates $\{\xi_k\}_{k=0}^{N-1}$, define the potential function
\begin{equation*}
	\mathcal V_{N,D}^a(\xi) = \beta_D \sum_{j=0}^{D-1} V^a(x_N(j\beta_D)),~~~~\xi\in\mathbb R^{dN}
\end{equation*}
and the matrix $\Sigma \in \mathbb R^{dN}$ by
\begin{equation*}
	\Sigma_{kl} = \frac1{\sqrt{(\omega_k^2+a)(\omega_l^2+a)}} \nabla_{\xi_k\xi_l}^2 \mathcal V_{N,D}^a(\xi) \in \mathbb R^{d\times d},~~~~
	k,l=0,1,\cdots,N-1,
\end{equation*}
then $\Sigma$ satisfies $-\frac{M_2}{a} I_{dN} \Prec \Sigma \Prec \frac{M_2}{a} I_{dN}$.
\end{lemma}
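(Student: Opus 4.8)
The plan is to reduce the operator bound on $\Sigma$ to the pointwise bound on $\nabla^2 V^a$ furnished by Assumption (ii), with the discrete orthonormality of the sampled Matsubara basis bridging the two. First I would differentiate $\mathcal V_{N,D}^a$ twice, exactly as in Lemma~\ref{appendix: 1}, to get
$$
\nabla_{\xi_k\xi_l}^2 \mathcal V_{N,D}^a(\xi) = \beta_D \sum_{j=0}^{D-1} \nabla^2 V^a(x_N(j\beta_D))\, c_k(j\beta_D) c_l(j\beta_D),
$$
so that $\Sigma_{kl} = \beta_D\big((\omega_k^2+a)(\omega_l^2+a)\big)^{-1/2}\sum_j \nabla^2 V^a(x_N(j\beta_D)) c_k(j\beta_D)c_l(j\beta_D)$. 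Since $\nabla^2 V^a$ is symmetric and the prefactor is symmetric in $(k,l)$, the block matrix $\Sigma$ is symmetric, so it suffices to control the quadratic form $u^\T \Sigma u$ over all $u = \{u_k\}_{k=0}^{N-1}\in\mathbb R^{dN}$.

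Next I would perform the change of variables $v_k = u_k/\sqrt{\omega_k^2+a}\in\mathbb R^d$ and introduce the loop values $w_j = \sum_{k=0}^{N-1} v_k c_k(j\beta_D)\in\mathbb R^d$. The double sum then collapses into a single sum over the quadrature nodes,
$$
u^\T \Sigma u = \beta_D \sum_{j=0}^{D-1} w_j\cdot \nabla^2 V^a(x_N(j\beta_D))\cdot w_j,
$$
and Assumption (ii), $-M_2 I_d \Prec \nabla^2 V^a \Prec M_2 I_d$, gives $|u^\T \Sigma u| \Le M_2\, \beta_D \sum_{j=0}^{D-1} |w_j|^2$ at once.

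The crucial step, and the one that uses Assumption (iii), is to evaluate $\beta_D \sum_{j} |w_j|^2$. Expanding the square and exchanging the order of summation reduces it to the discrete inner products $\beta_D \sum_{j=0}^{D-1} c_k(j\beta_D) c_l(j\beta_D)$. I would show these equal $\delta_{kl}$ for all $0\Le k,l\Le N-1$: this is the trapezoidal quadrature of the continuous orthonormality $\int_0^\beta c_k c_l\,\d\tau = \delta_{kl}$, and it is exact because each product $c_k c_l$ only excites integer Fourier frequencies of magnitude at most $N-1$, none of which is a nonzero multiple of $D$ once $N\Le D$, so no aliasing occurs. Feeding this in yields $\beta_D \sum_j |w_j|^2 = \sum_{k=0}^{N-1} |v_k|^2 = \sum_{k=0}^{N-1} |u_k|^2/(\omega_k^2+a)$.

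Finally, using $\omega_k^2 + a \Ge a$ for every $k$, I obtain $|u^\T \Sigma u| \Le \frac{M_2}{a}\sum_{k=0}^{N-1} |u_k|^2 = \frac{M_2}{a}|u|^2$ for all $u$, which is exactly $-\frac{M_2}{a} I_{dN}\Prec \Sigma\Prec \frac{M_2}{a}I_{dN}$. I expect the main obstacle to be the exact discrete orthonormality of the sampled basis: one must treat the centroid mode and each sine/cosine pair separately and check that the relevant trigonometric sums vanish, which hinges entirely on the no-aliasing condition $N\Le D$. This is precisely where Assumption (iii) is indispensable—when $N>D$ the off-diagonal discrete inner products need not vanish, the clean identity breaks, and the quadratic form can no longer be controlled by $\frac{M_2}{a}|u|^2$.
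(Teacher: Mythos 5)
Your proposal follows essentially the same route as the paper's proof: differentiate twice to get the Hessian as a quadrature sum, pass to the weighted vectors (your $w_j$, the paper's $v_j$), invoke the pointwise bound $-M_2 I_d \Prec \nabla^2 V^a \Prec M_2 I_d$, and reduce everything to the discrete orthogonality of the sampled Fourier basis together with $\omega_k^2 + a \Ge a$. One caveat: your claim of exact discrete orthonormality, $\beta_D\sum_{j=0}^{D-1} c_k(j\beta_D)c_l(j\beta_D) = \delta_{kl}$ for all $0 \Le k,l \Le N-1$, is not quite right in the edge case where $N = D$ is even — then the product $c_{N-1}^2$ (the sine mode at the Nyquist frequency $\lceil (N-1)/2\rceil = N/2$) excites frequency $N = D$, aliasing does occur, and in fact $c_{N-1}$ vanishes at every grid point, so the diagonal entry is $0$ rather than $1$ (e.g.\ $N=D=2$: $c_1(j\beta/2) = \sqrt{2/\beta}\sin(\pi j) = 0$). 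This is why the paper states the diagonal value as ``$0$ or $1$'' rather than $1$. The flaw is harmless for the lemma, since you only need the one-sided bound $\beta_D\sum_j |w_j|^2 \Le \sum_k |v_k|^2$, which survives the degenerate diagonal; but the intermediate equality $\beta_D\sum_j |w_j|^2 = \sum_k |v_k|^2$ should be weakened to an inequality.
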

\begin{proof}
By direct calculation, the Hessian of $\mathcal V_{N,D}^a(\xi)$ is given by
\begin{equation*}
\nabla_{\xi_k\xi_l}^2 \mathcal V_{N,D}^a(\xi) = \beta_D \sum_{j=0}^{D-1} \nabla^2 V^a(x_N(j\beta_D)) c_k(j\beta_D) c_l(j\beta_D)
\in \mathbb R^{d\times d},
\end{equation*}
then for any constants $\{q_k\}_{k=0}^{N-1}$ in $\mathbb R^d$, we have
\begin{equation*}
	\sum_{k,l=0}^{N-1} q_k\cdot \Sigma_{kl}\cdot q_l = \beta_D \sum_{j=0}^{D-1}
	\bigg(\sum_{k=0}^{N-1} \frac{q_k}{\sqrt{\omega_k^2+a}} c_k(j\beta_D) \bigg) \cdot
	\nabla^2 V^a(x_N(j\beta_D)) \cdot \bigg(\sum_{l=0}^{N-1} \frac{q_l}{\sqrt{\omega_l^2+a}} c_l(j\beta_D) \bigg).
\end{equation*}
Next, it is convenient to introduce the vectors
\begin{equation*}
	v_j = \sum_{k=0}^{N-1} \frac{q_k}{\sqrt{\omega_k^2+a}} c_k(j\beta_D) \in \mathbb R^d,~~~~ j = 0,1,\cdots,D-1,
\end{equation*}
then we obtain the relation
\begin{equation*}
	\sum_{k,l=0}^{N-1} q_k\cdot \Sigma_{kl}\cdot q_l = \beta_D \sum_{j=0}^{D-1}
	v_j \cdot \nabla^2 V^a(x_N(j\beta_D)) \cdot v_j \Longrightarrow
	\bigg|\sum_{k,l=0}^{N-1} q_k\cdot \Sigma_{kl}\cdot q_l\bigg| \Le \beta_D M_2 \sum_{j=0}^{D-1} |v_j|^2.
\end{equation*}
To estimate the summation $\sum_{j=0}^{D-1} |v_j|^2$, we write
\begin{equation*}
\sum_{j=0}^{D-1} |v_j|^2 = \sum_{k,l=0}^{N-1} \frac{q_k\cdot q_l}{\sqrt{(\omega_k^2+a)(\omega_l^2+a)}} \sum_{j=0}^{D-1} c_k(j\beta_D) c_l(j\beta_D).
\end{equation*}
Since the integers $k,l\Le D$, we have the discrete orthogonal condition
\begin{equation}
	\beta_D\sum_{j=0}^{D-1} c_k(j\beta_D) c_l(j\beta_D) = \left\{
	\begin{aligned}
	 & 0~\mathrm{or}~1, && \mbox{if $k=j$,} \\
	 & 0, && \mbox{if $k\neq j$.}
	\end{aligned}
	\right.
	\label{discrete normalize}
\end{equation}
Note that LHS of \eqref{discrete normalize} can be 0 when $k=j=D$ is an even integer.
Finally, we obtain
\begin{equation}
	\beta_D\sum_{j=0}^{D-1} |v_j|^2 \Le \frac1{a} \sum_{k=0}^{N-1} |q_k|^2 \Longrightarrow
	\bigg|\sum_{k,l=0}^{N-1} q_k\cdot \Sigma_{kl}\cdot q_l\bigg| \Le \frac1a\sum_{k=0}^{N-1} |q_k|^2.
	\label{discrete relation}
\end{equation}
Since \eqref{discrete relation} holds true for any constants $\{q_k\}_{k=0}^{N-1}$ in $\mathbb R^d$, we have $-\frac{M_2}{a} I_{dN} \Prec \Sigma \Prec \frac{M_2}{a} I_{dN}$.
\end{proof}
\section{Normal mode coordinates in path integral representation}
\label{appendix: normal}
We employ a slightly different approach from \cite{simple} to derive the normal mode coordinates. For simplicity, we assume the number of modes $N$ be an odd integer.
Let $\{\xi_k\}_{k=0}^{N-1}$ the mode coordinates, and
define the continuous loop by
\begin{equation*}
	x_N(\tau) = \sum_{k=0}^{N-1} \xi_k c_k(\tau),~~~~
	\tau \in [0,\beta],
\end{equation*}
where $\{c_k(\cdot)\}_{k=0}^{N-1}$ are the Fourier basis functions defined in Section~\ref{section: Matsubara}. Suppose the discretization size in $[0,\beta]$ is also $N$, then
the $N$ bead positions of the loop are
\begin{equation}
	x_j = x_N(j\beta_N) = \sum_{k=0}^{N-1} c_k(j\beta_N),~~~~j=0,1,\cdots,N-1.
	\label{normal transform}
\end{equation}
From \eqref{discrete normalize}, the grid values of $c_k(\cdot)$ satisfy the normalization condition
\begin{equation*}
	\beta_N \sum_{j=0}^{N-1} c_k(j\beta_N)
	c_l(j\beta_N)= \delta_{k,l},~~~~
	k,l=0,1,\cdots,N-1,
\end{equation*}
where $\delta_{k,l}$ is the Kronecker delta.
Then we have the equality
\begin{equation}
	\sum_{j=0}^{N-1} |x_j|^2 = \sum_{k=0}^{N-1} |\xi_k|^2 \sum_{j=0}^{N-1} c_k^2(j\beta_N) = \frac1{\beta_N} \sum_{k=0}^{N-1} |\xi_k|^2.
	\label{normal 1}
\end{equation}
The same procedure can be used to compute the spring potential
\begin{equation*}
	\sum_{j=0}^{N-1} |x_j - x_{j+1}|^2 =
	\sum_{j=0}^{N-1} x_j\cdot(2x_j - x_{j-1} - x_{j+1}) =
	{4}\sum_{k=0}^{N-1}
	\sin^2\bigg(\frac{\lceil\frac k2\rceil\pi}{N}\bigg)|\xi_k|^2.
\end{equation*}
By defining the normal mode frequencies
\begin{equation*}
	\omega_{k,N} =
	\frac{2}{\beta_N}  \sin\bigg(\frac{\lceil\frac k2\rceil\pi}{N}\bigg),
	~~~~ k =0,1,\cdots,N-1,
\end{equation*}
we can conveniently write the spring potential as
\begin{equation}
	\frac1{\beta_N^2} \sum_{j=0}^{N-1} |x_j - x_{j+1}|^2 =
	\sum_{k=0}^{N-1} \omega_{k,N}^2 |\xi_k|^2.
	\label{normal 2}
\end{equation}

Recall that in the standard PIMD with $N$ beads, the energy of the ring polymer is
\begin{equation*}
	\mathcal E^{\std}(x) =
	\frac1{2\beta_N^2} \sum_{j=0}^{N-1} |x_j - x_{j+1}|^2 +
	\beta_N \sum_{j=0}^{N-1} V(x_j),
\end{equation*}
then we employ the equalities \eqref{normal 1} and \eqref{normal 2} to rewrite $\mathcal E^{\std}(x)$ as
\begin{align*}
\mathcal E^{\std}(x) & =
	\frac1{2\beta_N^2} \sum_{j=0}^{N-1} |x_j - x_{j+1}|^2 +
	\frac{a\beta_N}2 \sum_{j=0}^{N-1} |x_j|^2 +
	\beta_N \sum_{j=0}^{N-1} V^a(x_j) \\
	& = \frac12\sum_{j=0}^{N-1} (\omega_{k,N}^2+a) |\xi_k|^2
	+ \beta_N \sum_{j=0}^{N-1} V^a
	\bigg(\sum_{k=0}^{N-1} \xi_k c_k(j\beta_N)\bigg).
\end{align*}
Therefore, the classical Boltzmann distribution in the standard PIMD is given by
\begin{equation*}
	\exp\bigg\{
	-\frac12\sum_{k=0}^{N-1}
	(\omega_{k,N}^2+a) |\xi_k|^2 - \beta_N\sum_{j=0}^{N-1} V^a
	\bigg(\sum_{k=0}^{N-1} \xi_k c_k(j\beta_N)\bigg)
	\bigg\}.
\end{equation*}
\section{Generalized $\Gamma$ calculus}
\label{appendix: review gamma}
We review the generalized $\Gamma$ calculus developed in \cite{m1,m2}, which deals with the ergodicity
of the Markov processes with degenerate diffusions, for example, the underdamped Langevin dynamics. The generalized $\Gamma$ calculus
is based on the Bakry--\'Emery theory \cite{bakry}.

Let $\{X_t\}_{t\Ge0}$ be a reversible Markov process in $\mathbb R^d$, and $(P_t)_{t\Ge0}$ be the Markov semigroup. Let $L$ be the infinitesimal generator of $\{X_t\}_{t\Ge0}$, and $\pi$ be the invariant distribution. Then $L$ is self-adjoint in $L^2(\pi)$.
In the classical Bakry--\'Emery theory, the carr\'e du champ operator $\Gamma_1(f,g)$ and the iterated operator $\Gamma_2(f,g)$ are defined by
\begin{align*}
	\Gamma_1(f,g) & = \frac12(L(fg) - gLf - fLg), \\
	\Gamma_2(f,g) & = \frac12(L\Gamma_1(f,g)
	- \Gamma_1(f,Lg) - \Gamma_1(g,Lf)).
\end{align*}
The curvature-dimension condition $CD(\rho,\infty)$ is known as the function inequality
\begin{equation*}
	\Gamma_2(f,f) \Ge \rho \Gamma_1(f,f),~~~~
	\mbox{for any smooth function $f$}.
\end{equation*}
For a given positive smooth function $f$ in $\mathbb R^d$, define the relative entropy of $f$ with respect to the invariant distribution $\pi$ by
\begin{equation*}
	\Ent_\pi(f) = \int_{\mathbb R^d}
	f\log f\d\pi - \int_{\mathbb R^d} f\d\pi
	\log \int_{\mathbb R^d} f\d\pi.
\end{equation*}
If $\rho>0$, then $CD(\rho,\infty)$ implies the log-Sobolev inequality (see Equation (5.7.1) of \cite{bakry})
\begin{equation}
	\Ent_{\pi}(f) \Le \frac1{2\rho}
	\int_{\mathbb R^d}
	\frac{\Gamma_1(f,f)}{f}\d\pi,~~~~
	\mbox{for any positive smooth function $f$}.
	\label{log Sobolev}
\end{equation}
and thus the exponential decay of the relative entropy (see Theorem 5.2.1 of \cite{bakry})
\begin{equation}
	\Ent_\pi(P_tf) \Le e^{-2\rho t} \Ent_\pi(f),~~~~
	\forall t\Ge0.
	\label{exponential decay}
\end{equation}
Inspired from the operators $\Gamma_1$ and $\Gamma_2$, we define the generalized $\Gamma$ operator as follows.
\begin{definition}
\label{definition: gamma}
Suppose $f$ is a smooth function, and $\Phi(f)$ is a function of $f$ and the derivatives of $f$. For a stochastic process $\{X_t\}_{t\Ge0}$ with generator $L$, define the generalized $\Gamma$ operator by
\begin{equation*}
	\Gamma_{\Phi}(f) = \frac12(L\Phi(f)-\d\Phi(f)\cdot Lf),
\end{equation*}
where $\d \Phi(f)\cdot g$ for two smooth functions $f,g$ are given by
\begin{equation*}
	\d \Phi(f)\cdot g = \lim_{s\rightarrow 0}
	\frac{\Phi(f+sg) - \Phi(f)}s.
\end{equation*}
\end{definition}
\noindent
The expression of $\Gamma_\Phi(f)$ can be obtained via the following result (Lemma 5 of \cite{m2}).
\begin{lemma}
\label{lemma: calculation of gamma}
Suppose $C_1,C_2$ are two linear operators and $\Phi(f) = C_1f\cdot C_2f$, then
\begin{equation*}
	\Gamma_{\Phi}(f) = \Gamma_1(C_1f, C_2f) +
	\frac12 C_1f \cdot [L,C_2]f +
	\frac12 [L,C_1]f\cdot  C_2f,
\end{equation*}
where $\Gamma_1(\cdot,\cdot)$ is the classical carr\'e du champ operator.
\end{lemma}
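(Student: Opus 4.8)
The plan is to prove this identity by direct algebraic manipulation from the two defining formulas, namely $\Gamma_{\Phi}(f) = \frac12(L\Phi(f) - \d\Phi(f)\cdot Lf)$ and $\Gamma_1(u,v) = \frac12(L(uv) - vLu - uLv)$, the latter extended component-wise to vector-valued arguments so that $u\cdot v$ denotes the Euclidean inner product and $L$ acts scalar-wise on each product $u_iv_i$. I would make this convention explicit at the outset, reading $\Gamma_1(C_1f,C_2f)$ as $\sum_i\Gamma_1((C_1f)_i,(C_2f)_i)$ and the commutators $[L,C_i]f$ component-wise, so that every cancellation below is genuinely term-by-term. No structural property of $L$ beyond linearity is needed; in particular I do not use that $L$ is a derivation, since the second-order diffusion contribution is entirely absorbed into the $\Gamma_1$ term.

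First I would compute the differential $\d\Phi(f)\cdot g$ for $\Phi(f) = C_1f\cdot C_2f$. Because $C_1$ and $C_2$ are linear, $\Phi(f+sg) = (C_1f + sC_1g)\cdot(C_2f + sC_2g)$ expands into a quadratic polynomial in $s$, and extracting the coefficient of $s$ gives $\d\Phi(f)\cdot g = C_1f\cdot C_2g + C_1g\cdot C_2f$. Specializing $g = Lf$ yields $\d\Phi(f)\cdot Lf = C_1f\cdot C_2Lf + C_1Lf\cdot C_2f$, so that from the definition
\begin{equation*}
	\Gamma_{\Phi}(f) = \tfrac12\big(L(C_1f\cdot C_2f) - C_1f\cdot C_2Lf - C_1Lf\cdot C_2f\big).
\end{equation*}

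Next I would expand the proposed right-hand side. Writing $\Gamma_1(C_1f,C_2f) = \tfrac12\big(L(C_1f\cdot C_2f) - C_2f\cdot LC_1f - C_1f\cdot LC_2f\big)$ and substituting $[L,C_i]f = LC_if - C_iLf$, the right-hand side becomes $\tfrac12 L(C_1f\cdot C_2f)$ plus six further half-terms. The crucial cancellations are that $+\tfrac12\,C_1f\cdot LC_2f$ (from $[L,C_2]$) annihilates $-\tfrac12\,C_1f\cdot LC_2f$ (from $\Gamma_1$), and that $+\tfrac12\,LC_1f\cdot C_2f$ (from $[L,C_1]$) annihilates $-\tfrac12\,C_2f\cdot LC_1f$ (from $\Gamma_1$), where the second cancellation uses only the symmetry $u\cdot v = v\cdot u$ of the inner product. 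What survives is precisely $\tfrac12 L(C_1f\cdot C_2f) - \tfrac12\,C_1f\cdot C_2Lf - \tfrac12\,C_1Lf\cdot C_2f$, which coincides with the expression for $\Gamma_{\Phi}(f)$ derived above, establishing the claim.

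Since the whole argument is a formal rearrangement, there is no genuine analytic obstacle; the only point demanding care is the bookkeeping of the vector-valued convention, so that $\Gamma_1(C_1f,C_2f)$, $C_1f\cdot[L,C_2]f$ and $[L,C_1]f\cdot C_2f$ are all interpreted consistently as sums over components. Stating this convention first is what makes the pairwise cancellations unambiguous, and with it the identity follows by matching terms.
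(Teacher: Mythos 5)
Your proof is correct. The paper itself does not prove this lemma---it simply cites Lemma~5 of the reference on generalized $\Gamma$ calculus---so your direct verification fills in what the paper leaves to a citation, and it is exactly the expected argument: compute $\d\Phi(f)\cdot Lf = C_1f\cdot C_2Lf + C_1Lf\cdot C_2f$ from linearity of $C_1,C_2$, expand $\Gamma_1(C_1f,C_2f)$ and the two commutator terms, and cancel pairwise. Your explicit component-wise convention for vector-valued $C_if$ (reading $\Gamma_1$ and the commutators as sums over components) is a worthwhile clarification, since in the paper's application the operators are gradients $\nabla_{\xi_k}$, $\nabla_{\eta_k}$ and the identity is indeed used in that summed sense.
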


In the following we assume the stochastic process $\{X_t\}_{t\Ge0}$ in $\mathbb R^d$ is solved by
\begin{equation*}
	\d X_t = b(X_t)\d t + \sigma\d B_t,~~~~
	t\Ge0,
\end{equation*}
where $b:\mathbb R^d\rightarrow\mathbb R^d$ is the drift force, $\sigma\in\mathbb R^{d\times m}$ is a constant matrix, and $\{B_t\}_{t\Ge0}$ is the standard Brownian motion in $\mathbb R^m$. Then the generator of $\{X_t\}_{t\Ge0}$ is given by
\begin{equation}
	Lf(x) = b(x)\cdot f(x) + \nabla\cdot(D\nabla f),
	\label{generator simple}
\end{equation}
where $D = \frac12 \sigma\sigma^\T \in \mathbb R^{d\times d}$ is the constant diffusion matrix.
For the generator $L$ given in \eqref{generator simple}, we calculate the $\Gamma$ operator $\Gamma_\Phi(f)$ with some classical choices of $\Phi$.
\begin{example}
\label{example: 1}
If $\Phi(f) = |f|^2$, then we can take $C_1 = C_2 = 1$ in Lemma \ref{lemma: calculation of gamma} and obtain
\begin{equation*}
	\Gamma_\Phi(f) = \Gamma_1(f,f) = \frac12\big[
	\nabla(D\nabla(f^2)) - 2f\nabla(D\nabla f)
	\big]
	= (\nabla f)^\T D \nabla f.
\end{equation*}
In particular, since $D\in\mathbb R^{d\times d}$ is positive semidefinite, we always have $\Gamma_1(f) \Ge 0$.
\end{example}
\begin{example}
\label{example: 2}
If $\Phi(f) = f\log f$, then by direct calculation we have
\begin{equation*}
	\Gamma_\Phi(f) = \frac12 \Big[
	\nabla\cdot \big(D(\log f+1)\nabla f\big) -
	(\log f+1) Lf
	\Big]
	= \frac{(\nabla f)^\T D\nabla f}{2f}.
\end{equation*}
\end{example}
\begin{example}
\label{example: 4}
If $\Phi(f) = |Cf|^2/f$ for some linear operator $C$, then
\begin{equation}
	\Gamma_{\Phi}(f) \Ge \frac{Cf\cdot [L,Cf]}{f}.
	\label{Gamma estimate 2}
\end{equation}
The proof below is given in Lemma 7 of \cite{m2}.
\begin{proof}
It is easy to verify for any smooth functions $f,g$, there holds
\begin{equation*}
	L(fg) = gLf + fLg + 2\Gamma_1(f,g).
\end{equation*}
By replacing $f\rightarrow|Cf|^2$ and $g\rightarrow 1/f$, we have
\begin{align}
	L \bigg( \frac{|Cf|^2}f \bigg) & =
	\frac1f L(|Cf|^2) + |Cf|^2
	L\bigg(\frac1f\bigg) + 2\Gamma_1
	\bigg(|Cf|^2,\frac1f\bigg) \notag \\
	& = \frac1f L(|Cf|^2) + |Cf|^2
	\bigg(-\frac{Lf}{f^2} + \frac2{f^3}\Gamma_1(f)\bigg) +
	\frac{4Cf\cdot \Gamma_1(Cf,f)}{f^2}.
	\label{Gamma 1}
\end{align}
Note that
\begin{equation}
	\d\bigg(\frac{|Cf|^2}f\bigg) \cdot
	|Cf|^2 = \frac{\d(|Cf|^2)\cdot Lf}{f^2}
	-|Cf|^2 \frac{Lf}{f^2}.
	\label{Gamma 2}
\end{equation}
Hence from \eqref{Gamma 1}\eqref{Gamma 2} and the definition of the generalized $\Gamma$ operator in \eqref{definition: gamma}, we have
\begin{equation}
	\Gamma_{\Phi}(f) \Ge
	\frac{\Gamma_{|C\cdot|^2}(f)}{f} +
	\frac1{f^3} |Cf|^2 \Gamma_1(f) +
	\frac{2Cf\cdot \Gamma_1(Cf,f)}{f^2},
	\label{Gamma 3}
\end{equation}
where $\Gamma_{|C\cdot|^2}$ is the generalized
$\Gamma$ operator induced by the function $|Cf|^2$.
Since the matrix $D\in\mathbb R^{d\times d}$ is positive semidefinite, we have the Cauchy-Swarchz inequality
\begin{equation}
	|\Gamma_1(f,Cf)|^2 \Le \Gamma_1(f,f) \Gamma_1(Cf,Cf),~~~~
	\mbox{for any smooth function $f$}.
	\label{Gamma 4}
\end{equation}
From \eqref{Gamma 3} and \eqref{Gamma 4} we obtain
\begin{align*}
	\Gamma_{\Phi}(f) & \Ge
	\frac{\Gamma_1(Cf,Cf) + Cf\cdot [L,C]f}{f} +
	\frac{|Cf|^2 \Gamma_1(f,f)}{f^3} \\
	& ~~~~ - 2 \sqrt{\frac{|Cf|^2\Gamma_1(f,f)}{f^3}}
	\sqrt{\frac{\Gamma_1(Cf,Cf)}{f}} \Ge \frac{Cf\cdot [L,C]f}{f}.
\end{align*}
Hence we obtain the desired result.
\end{proof}
\end{example}

Now we establish the curvature-dimension conditions for the generalized $\Gamma$ operators. The following result relates the time derivative of $\Phi(f)$ with $\Gamma_{\Phi}(f)$.
\begin{lemma}
\label{lemma: time derivative}
Given the constant $t>0$, for any $s\in[0,t]$, we have the equality
\begin{equation*}
	\frac{\d}{\d s} \big[
	P_s \Phi(P_{t-s}f)(x) \big] = 2P_s
	\Gamma_{\Phi}(P_{t-s}f)(x).
\end{equation*}
As a consequence, for any $t\Ge0$,
\begin{equation*}
	\frac{\d}{\d t}
	\int_{\mathbb R^d}
	\Phi(P_tf)\d\pi
	= -2\int_{\mathbb R^d}
	\Gamma_\Phi(P_tf)\d\pi.
\end{equation*}
\end{lemma}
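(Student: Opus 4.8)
The plan is to treat $F(s) := P_s\big[\Phi(P_{t-s}f)\big]$ as a function on the interval $[0,t]$ built from two semigroup flows running in opposite directions, and to differentiate it by the chain rule. The two facts I would use are the standard semigroup identities $\frac{\d}{\d s}P_s g = LP_s g = P_s L g$ and, specializing to the argument $t-s$, $\frac{\d}{\d s}P_{t-s}f = -LP_{t-s}f$. Introducing the two-parameter map $G(s,r) := P_s[\Phi(P_r f)]$ so that $F(s) = G(s,t-s)$, the chain rule gives
\begin{equation*}
	F'(s) = \partial_s G(s,t-s) - \partial_r G(s,t-s).
\end{equation*}
The first term equals $P_s\big(L\Phi(P_{t-s}f)\big)$ by differentiating the outer semigroup, and the second equals $P_s\big(\d\Phi(P_{t-s}f)\cdot LP_{t-s}f\big)$, where I apply the definition of the directional derivative $\d\Phi(g)\cdot h$ with base point $g = P_{t-s}f$ and direction $h = LP_{t-s}f$.

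Next I would invoke the definition of the generalized $\Gamma$ operator from Definition~\ref{definition: gamma}, namely $\Gamma_\Phi(g) = \frac12\big(L\Phi(g) - \d\Phi(g)\cdot Lg\big)$, evaluated at $g = P_{t-s}f$. The bracket appearing in $F'(s)$ is then precisely $2\Gamma_\Phi(P_{t-s}f)$, and since $P_s$ is linear it can be pulled outside, yielding $F'(s) = 2P_s\Gamma_\Phi(P_{t-s}f)$, which is the first identity. To obtain the stated consequence, I would integrate this identity against the invariant measure $\pi$ and use the invariance $\int_{\mathbb R^d} P_s g\,\d\pi = \int_{\mathbb R^d} g\,\d\pi$, valid for all $s\Ge0$. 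This removes the outer $P_s$ on the right and commutes $\frac{\d}{\d s}$ with the integral on the left, giving $\frac{\d}{\d s}\int \Phi(P_{t-s}f)\,\d\pi = 2\int \Gamma_\Phi(P_{t-s}f)\,\d\pi$; substituting $u = t-s$ and relabeling turns this into $\frac{\d}{\d t}\int \Phi(P_tf)\,\d\pi = -2\int \Gamma_\Phi(P_tf)\,\d\pi$. Equivalently, one may differentiate $\int \Phi(P_tf)\,\d\pi$ directly, rewrite the integrand as $L\Phi(P_tf) - 2\Gamma_\Phi(P_tf)$ using the same definition, and discard $\int L\Phi(P_tf)\,\d\pi = 0$, which vanishes because $L\mathbf 1 = 0$ and $L$ is self-adjoint in $L^2(\pi)$.

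The algebraic content is thus just the definition of $\Gamma_\Phi$; the genuine difficulty is analytic justification rather than computation. I expect the main obstacle to be ensuring that all the formal steps are licensed: that $P_{t-s}f$ and the composite $\Phi(P_{t-s}f)$ lie in the domain of $L$, that the directional derivative $\d\Phi$ and the time derivative interchange cleanly with $P_s$ and with $\int\cdot\,\d\pi$, and that the integrands are integrable so that $\frac{\d}{\d s}$ passes under the integral. For the hypoelliptic underdamped generator this is handled by restricting to a suitable dense core of smooth, rapidly decaying functions, on which the diffusion semigroup is smoothing and the manipulations are rigorous, and then extending by density; at the level of this lemma, however, once regularity is granted the two identities follow immediately from Definition~\ref{definition: gamma} and the invariance of $\pi$.
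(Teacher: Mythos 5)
Your proposal is correct and follows essentially the same route as the paper: differentiate $s\mapsto P_s\Phi(P_{t-s}f)$ by the chain rule (outer semigroup giving the $P_sL\Phi$ term, inner flow giving $-P_s\,\d\Phi(P_{t-s}f)\cdot LP_{t-s}f$), recognize the bracket as $2\Gamma_\Phi(P_{t-s}f)$ from Definition~\ref{definition: gamma}, then integrate against $\pi$, use invariance, and relabel $t-s$ to get the second identity. Your added remarks on domain/regularity issues and the alternative derivation via $\int L\Phi(P_tf)\,\d\pi = 0$ are sound but do not change the substance of the argument.
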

\begin{proof}
Using the chain rule, we have
\begin{align*}
	\frac{\d}{\d s}\big[
	P_s \Phi(P_{t-s} f)
	\big] & = LP_s \Phi(P_{t-s} f) + P_s
	\frac{\d}{\d s} \big[
	\Phi(P_{t-s}f)
	\big] \\
	& = LP_s \Phi(P_{t-s}f) + P_s
	\lim_{r\rightarrow 0}
	\frac{\Phi(P_{t-s-r}f) - \Phi(P_{t-s}f)}{r} \\
	& = LP_s \Phi(P_{t-s}f) - P_s \d \Phi(P_{t-s}f)
	\cdot L P_{t-s}f \\
	& = P_s \big(
	L \Phi_{t-s}f - \d \Phi(P_{t-s}f) \cdot L P_{t-s}f
	\big) \\
	& = 2P_s \Gamma_{\Phi}(P_{t-s}f).
\end{align*}
Integrating the equality over the distribution $\pi$, we obtain
\begin{equation}
	\frac{\d}{\d s}\int_{\mathbb R^d}
	P_s\Phi(P_{t-s}f) \d\pi =
	2\int_{\mathbb R^d} \Gamma_{\Phi}(P_{t-s}f)\d\pi.
	\label{middle 6}
\end{equation}
Since $\pi$ is the invariant distribution, replacing $t-s$ by $s$ in \eqref{middle 6}, we obtain
\begin{equation*}
	\frac{\d}{\d s} \int_{\mathbb R^d}
	\Phi(P_tf)\d\pi = -2\int_{\mathbb R^d}
	\Gamma_{\Phi}(P_t f)\d\pi,
\end{equation*}
which completes the proof.
\end{proof}
\noindent
By choosing $\Phi(f) = f\log f$, Lemma \ref{lemma: time derivative} implies
\begin{equation}
	\frac{\d}{\d t}\Ent_\pi(P_t f) =
	\frac{\d}{\d t} \int_{\mathbb R^d}
	\Phi(P_tf) \d t = -\int_{\mathbb R^d}
	\frac{(\nabla f)^\T D\nabla f}{f}\d\pi = -\int_{\mathbb R^d} \frac{\Gamma_1(f,f)}{f}\d\pi.
	\label{middle 8}
\end{equation}
If we have the log-Sobolev inequality \eqref{log Sobolev}, from \eqref{middle 8} we have
\begin{equation*}
	\frac{\d}{\d t}\Ent_{\pi}(P_tf) \Le -2\rho \Ent_{\pi}(f),
\end{equation*}
which implies $\Ent_{\pi}(P_tf) \Le e^{-2\rho t}\Ent_\pi(f)$, and we recover the result in \eqref{exponential decay}.

Now we state the main theorem, which provides the generalized curvature-dimension condition for degenerate diffusion processes.
\begin{theorem}
\label{theorem: generalized}
Let $\{X_t\}_{t\Ge0}$ be an ergodic stochastic process with the invariant distribution $\pi$. If for two functions $\Phi_1(f)$ and $\Phi_2(f)$, there hold the functional inequalities
\begin{equation}
	0 \Le \int_{\mathbb R^d} \Phi_1(f)\d\pi -
	\Phi_1\bigg(
	\int_{\mathbb R^d} f\d\pi
	\bigg) \Le c
	\int_{\mathbb R^d} \Phi_2(f)\d\pi,
	\label{middle 9}
\end{equation}
\begin{equation}
	\Gamma_{\Phi_2}(f) \Ge \rho \Phi_2(f) - m
	\Gamma_{\Phi_1}(f),
	\label{generalized CD}
\end{equation}
for some constants $c,\rho,m>0$, then by defining the entropy-like quantity
\begin{equation*}
	W_\pi(f) = m\bigg(
	\int_{\mathbb R^d}
	\Phi_1(f)\d\pi -
	\Phi_1\bigg(
	\int_{\mathbb R^d} f\d\pi\bigg)
	\bigg) + \int_{\mathbb R^d} \Phi_2(f)\d\pi,
\end{equation*}
we have the exponential decay
\begin{equation*}
	W_\pi(P_tf) \Le \exp\bigg(
	-\frac{2\rho t}{1+m c}
	\bigg) W_\pi(f),~~~~ \forall t\Ge 0.
\end{equation*}
\end{theorem}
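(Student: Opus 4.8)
The plan is to track the time evolution of $W_\pi(P_tf)$ directly and close a Grönwall-type differential inequality using the time-derivative identity of Lemma~\ref{lemma: time derivative}. Write $W_\pi(P_tf) = m E_1(t) + E_2(t)$, where
\begin{equation*}
E_1(t) = \int_{\mathbb R^d} \Phi_1(P_tf)\d\pi - \Phi_1\bigg(\int_{\mathbb R^d} P_tf\d\pi\bigg),~~~~ E_2(t) = \int_{\mathbb R^d}\Phi_2(P_tf)\d\pi.
\end{equation*}
Since $\pi$ is invariant, $\int P_tf\d\pi = \int f\d\pi$ is constant in $t$, so the subtracted term in $E_1(t)$ does not contribute to $\frac{\d}{\d t}E_1(t)$.

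First I would differentiate. Applying Lemma~\ref{lemma: time derivative} with $\Phi=\Phi_1$ and $\Phi=\Phi_2$ gives $\frac{\d}{\d t}E_1(t) = -2\int_{\mathbb R^d}\Gamma_{\Phi_1}(P_tf)\d\pi$ and $\frac{\d}{\d t}E_2(t) = -2\int_{\mathbb R^d}\Gamma_{\Phi_2}(P_tf)\d\pi$, hence
\begin{equation*}
\frac{\d}{\d t} W_\pi(P_tf) = -2m\int_{\mathbb R^d}\Gamma_{\Phi_1}(P_tf)\d\pi - 2\int_{\mathbb R^d}\Gamma_{\Phi_2}(P_tf)\d\pi.
\end{equation*}
The key step is the cancellation produced by the generalized curvature-dimension condition \eqref{generalized CD}. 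As \eqref{generalized CD} holds pointwise for every smooth function, I apply it to $P_tf$ and integrate against $\pi$ to obtain $\int\Gamma_{\Phi_2}(P_tf)\d\pi \Ge \rho E_2(t) - m\int\Gamma_{\Phi_1}(P_tf)\d\pi$. Substituting this bound, the two $\Gamma_{\Phi_1}$ integrals cancel exactly and leave
\begin{equation*}
\frac{\d}{\d t}W_\pi(P_tf) \Le -2\rho E_2(t).
\end{equation*}
I would emphasize that this cancellation needs no sign information on $\Gamma_{\Phi_1}$; it is engineered precisely by the coefficient $m$ placed in front of $E_1$ in the definition of $W_\pi$, which is the whole reason $W_\pi$ is the right quantity to monitor.

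It remains to convert this into a closed inequality for $W_\pi(P_tf)$ itself. Using the upper bound in \eqref{middle 9} applied to $P_tf$, namely $E_1(t)\Le c\,E_2(t)$, I estimate $W_\pi(P_tf) = mE_1(t) + E_2(t) \Le (1+mc)E_2(t)$, so that $E_2(t)\Ge W_\pi(P_tf)/(1+mc)$. Combined with the previous display this yields $\frac{\d}{\d t}W_\pi(P_tf) \Le -\frac{2\rho}{1+mc}W_\pi(P_tf)$, and Grönwall's inequality gives the claimed decay with rate $2\rho/(1+mc)$.

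Each step is short, so I expect the only genuine difficulty to be analytic rather than algebraic: the main point to justify carefully is the legitimacy of differentiating under the integral sign in Lemma~\ref{lemma: time derivative} and of applying the pointwise inequalities to $P_tf$, which rests on $P_tf$ remaining a positive smooth function with sufficient integrability for all $t\Ge0$ — the standard regularity framework of the Bakry--\'Emery calculus. I would also record that the left inequality in \eqref{middle 9} together with $\Phi_2(f)\Ge0$ forces $E_1(t),E_2(t)\Ge0$, whence $W_\pi(P_tf)\Ge0$, so that the Grönwall conclusion is meaningful and the exponential estimate is nontrivial.
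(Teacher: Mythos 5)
Your proposal is correct and follows essentially the same route as the paper's own proof: differentiate $W_\pi(P_tf)$ via Lemma~\ref{lemma: time derivative}, use the generalized curvature-dimension condition \eqref{generalized CD} to cancel the $\Gamma_{\Phi_1}$ terms, bound $W_\pi(P_tf)\Le(1+mc)\int\Phi_2(P_tf)\d\pi$ via \eqref{middle 9}, and conclude by Gr\"onwall. Your explicit remarks on the constancy of $\int P_tf\d\pi$ and on the nonnegativity of $W_\pi$ are minor clarifications the paper leaves implicit, not a different argument.
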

\noindent
The proof below is given in Lemma 3 of \cite{m2}.
\begin{proof}
Using Lemma \ref{lemma: time derivative} and \eqref{generalized CD}, we have
\begin{align}
	\frac{\d}{\d t} W_\pi(P_tf) & = \frac{\d}{\d t}\bigg[
	m \int_{\mathbb R^d} \Phi_1(P_tf)\d\pi +
	 \int_{\mathbb R^d} \Phi_2(P_tf)\d\pi\bigg] \notag \\
	& = -2\int_{\mathbb R^d} \big(
	m \Gamma_{\Phi_1} + \Gamma_{\Phi_2}
	\big)(P_tf)\d\pi \Le -2\rho \int_{\mathbb R^d} \Phi_2(P_tf)\d\pi.
	\label{middle 10}
\end{align}
Using \eqref{middle 9} and the definition of $W(f)$, we have
\begin{equation*}
	W_\pi(f) = m\bigg(
		\int_{\mathbb R^d}
		\Phi_1(f)\d\pi -
		\Phi_1\bigg(
		\int_{\mathbb R^d} f\d\pi\bigg)
		\bigg) + \int_{\mathbb R^d} \Phi_2(f)\d\pi \Le
	(1+m c) \int_{\mathbb R^d} \Phi_2(f)\d\pi.
\end{equation*}
Hence \eqref{middle 10} implies
\begin{equation*}
	\frac{\d}{\d t} W_\pi(P_tf) \Le -\frac{2\rho}{1+m c} W_\pi(P_tf),~~~~\forall t\Ge0,
\end{equation*}
yielding the desired result.
\end{proof}

\end{appendices}

\bibliography{reference}
\bibliographystyle{unsrt}

\end{document}